\begin{document}

\def\E{\ifmmode{\mathbb E}\else{$\mathbb E$}\fi} %natural numbers
\def\N{\ifmmode{\mathbb N}\else{$\mathbb N$}\fi} %natural numbers
\def\R{\ifmmode{\mathbb R}\else{$\mathbb R$}\fi} %real numbers
\def\Q{\ifmmode{\mathbb Q}\else{$\mathbb Q$}\fi} %rational numbers
\def\C{\ifmmode{\mathbb C}\else{$\mathbb C$}\fi} %complex numbers
\def\H{\ifmmode{\mathbb H}\else{$\mathbb H$}\fi} %complex numbers
\def\Z{\ifmmode{\mathbb Z}\else{$\mathbb Z$}\fi} %integers
\def\P{\ifmmode{\mathbb P}\else{$\mathbb P$}\fi} %real numbers
\def\T{\ifmmode{\mathbb T}\else{$\mathbb T$}\fi} %real numbers
\def\SS{\ifmmode{\mathbb S}\else{$\mathbb S$}\fi} %real numbers
\def\DD{\ifmmode{\mathbb D}\else{$\mathbb D$}\fi} %real numbers

\renewcommand{\a}{\alpha}
\renewcommand{\b}{\beta}
\renewcommand{\d}{\delta}
\newcommand{\D}{\Delta}
\newcommand{\e}{\varepsilon}
\newcommand{\g}{\gamma}
\newcommand{\G}{\Gamma}
\newcommand{\la}{\lambda}
\newcommand{\La}{\Lambda}
\newcommand{\n}{\nabla}
\newcommand{\var}{\varphi}
\newcommand{\s}{\sigma}
\newcommand{\Sig}{\Sigma}
\renewcommand{\t}{\tau}
\renewcommand{\th}{\theta}
\renewcommand{\O}{\Omega}
\renewcommand{\o}{\omega}
\newcommand{\z}{\zeta}

\newcommand{\ben}{\begin{enumerate}}
\newcommand{\een}{\end{enumerate}}
\newcommand{\be}{\begin{equation}}
\newcommand{\ee}{\end{equation}}
\newcommand{\bea}{\begin{eqnarray}}
\newcommand{\eea}{\end{eqnarray}}
\newcommand{\bc}{\begin{center}}
\newcommand{\ec}{\end{center}}

\newtheorem{thm}{Theorem}[section]
\newtheorem{cor}[thm]{Corollary}
\newtheorem{lem}[thm]{Lemma}
\newtheorem{prop}[thm]{Proposition}
\newtheorem{ax}{Axiom}
\newtheorem{conj}[thm]{Conjecture}
\newtheorem{fact}[thm]{Fact}

\theoremstyle{definition}
\newtheorem{defn}{Definition}[section]

\theoremstyle{remark}
\newtheorem{rem}{\rm\bfseries{Remark}}[section]
\newtheorem*{notation}{Notation}

\newtheorem{ques}{\rm\bfseries{Question}}[section]
\newtheorem{cons}[rem]{\rm\bfseries{Construction}}
\newtheorem{exm}[rem]{\rm\bfseries{Example}}

%\numberwithin{equation}{section}

\def\A{\text{A}}
\def\B{\text{B}}
\def\D{\text{D}}
\def\C{\text{C}}
\def\E{\text{E}}
\def\F{\text{F}}
\def\G{\text{G}}
\def\H{\text{H}}
\def\I{\text{I}}
\def\J{\text{J}}
\def\K{\text{K}}

%==========================================
%% Do not edit the following command
%\setcounter{page}{1}
%\volume{24}
%==========================================

\title[Homology spheres yielding lens spaces]{Homology spheres yielding lens spaces}
\author[Tange]{Motoo Tange}

\thanks{This research was supported by JSPS KAKENHI 	
Grant-in-Aid for Young Scientists (Start-up) Grant Number 19840029 and (B) Grand Number 17K14180.}

% addresses of authors
\address{Institute of Mathematics, University of Tsukuba,
 1-1-1 Tennodai, Tsukuba, Ibaraki 305-8571, Japan}
\email{tange@math.tsukuba.ac.jp}

\begin{abstract}
It is known by the author that there exist 20 families of Dehn surgeries in the Poincar\'e homology sphere yielding lens spaces.
In this paper, we give the concrete knot diagrams of the families and extend them to families of lens space surgeries in Brieskorn homology spheres.
We illustrate families of lens space surgeries in $\Sigma(2,3,6n\pm1)$ and $\Sigma(2,2s+1,2(2s+1)n\pm1)$ and so on.
As other examples, we give lens space surgeries in graph homology spheres, which are obtained by splicing two Brieskorn homology spheres.
\end{abstract}
\keywords{Lens space surgery, (1,1)-simple knot, homology sphere, Brieskorn homology sphere, graph homology sphere}

\maketitle

\section{Introduction}
\label{intro}
\subsection{Lens space surgeries and double-primitive knots}
We define lens space $L(p,q)$ to be the $+p/q$-surgery of the unknot in $S^3$.
Let $Y$ be a homology sphere.
If a knot $K\subset Y$ yields a lens space by some (positive) integral Dehn surgery, we say that $K\subset Y$ admits ({\it positive}) {\it lens surgery}.
If in general the surgered manifold is e.g., Seifert manifold, we say the surgery {\it Seifert surgery} and so on.
We write $p$-surgery of $K\subset Y$ as $Y_p(K)$.
If $K$ admits lens space surgery or Seifert surgery, then such a knot $K$ is called {\it lens space knot} or {\it Seifert space knot} respectively.
The number $p$ of the Dehn surgery $Y_p(K)$ is called {\it slope} and $[\tilde{K}]\in H_1(Y_p(K))$ is called {\it dual class} in this paper.
{\it Throughout this paper, any slope is a positive integer if there are no special statements.}
Here $\tilde{K}\subset Y_p(K)$ is dual knot (i.e., core circle of the attached solid torus) of the surgery.
We call the pair of slope and dual class {\it (e.g., lens surgery or Seifert surgery) parameter}, depending on the surgered manifold.

As examples of lens space knots in $S^3$ it is well-known that there are the torus knots, some cable knots of torus knots and some hyperbolic knots and so on.
Berge in \cite{B} defined double-primitive knot in $S^3$ which generalizes these examples.
%Any double-primitive knot is a lens space knot.
Here we define double-primitive knot in a general way.
\begin{defn}[Double-primitive knot]
Let $Y$ be a homology sphere with at most 2 Heegaard genus.
Suppose $K\subset Y$ is a knot and $H_0\cup_{\Sigma_2} H_1$ is a genus 2 Heegaard decomposition of $Y$, where each $H_i$ is the genus 2 handlebody and $\Sigma_2$ is the Heegaard surface.
If $K\subset Y$ is isotopic to a knot $K'$ satisfying the following conditions, then $K$ is called a {\it double-primitive knot}.
\begin{itemize}
\item $K'$ lies in $\Sigma_2$.
\item For $i=0,1$ the induced elements $[K']\in \pi_1(H_i)\cong F_2$ (the rank 2 free group) are both primitive elements in $F_2$.
\end{itemize}
\end{defn}
The main ideas by Berge in \cite{B} are the definition of double-primitive knots and the two basic facts which are written below.
\begin{fact}
Any double-primitive knot is a lens space knot.
\end{fact}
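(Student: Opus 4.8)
The plan is to realize the lens space surgery along the \emph{surface slope} of $K'$, i.e. the framing $\gamma$ that $K'\subset\Sigma_2$ inherits from the Heegaard surface itself, and then to show that $Y_\gamma(K')$ is glued from two solid tori. First I would record the purely handlebody-theoretic fact that drives everything: if $c\subset\partial H$ is a simple closed curve on the boundary of a genus $g$ handlebody $H$ whose class $[c]\in\pi_1(H)\cong F_g$ is primitive, then attaching a $2$-handle to $H$ along $c$ produces a handlebody of genus $g-1$. Applying this with $g=2$ to the two sides of the Heegaard decomposition, the hypothesis that $[K']$ is primitive in both $\pi_1(H_0)$ and $\pi_1(H_1)$ shows that attaching a $2$-handle to $H_i$ along $K'$ (with the surface framing) yields a solid torus $W_i$ for $i=0,1$.

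Next I would identify the surface-slope surgery with this pair of $2$-handle attachments. Writing $V=N(K')$ for a tubular neighborhood, the surface $\Sigma_2$ meets $V$ in an annulus $A$ whose two boundary curves are parallel copies of the surface slope $\gamma$, and $A$ separates $\partial V$ into an annulus on the $H_0$-side and one on the $H_1$-side. The $\gamma$-surgery removes $V$ and glues back a solid torus $V'$ whose meridian is $\gamma$; cutting $V'$ along a meridian disk exhibits it as the union of a $2$-handle attached to $H_0\setminus V$ along $K'$ and a $2$-handle attached to $H_1\setminus V$ along $K'$. Reassembling, one obtains
\[
Y_\gamma(K')=\bigl(H_0\cup(\text{$2$-handle along }K')\bigr)\cup_{T}\bigl(H_1\cup(\text{$2$-handle along }K')\bigr)=W_0\cup_{T}W_1,
\]
a union of the two solid tori $W_0,W_1$ along their common boundary torus $T$. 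A closed orientable manifold obtained by gluing two solid tori along their boundaries is a lens space (allowing $S^3$ and $S^1\times S^2$ as degenerate cases), so $Y_\gamma(K')$ is a lens space. Since $Y$ is a homology sphere, the surface framing is an integer, so this is an integral surgery and $|H_1(Y_\gamma(K'))|$ equals that integer; in particular for a nonzero slope the gluing is genuinely an $L(p,q)$, which identifies $K'$, hence $K$, as a lens space knot.

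The step I expect to be the main obstacle is making the middle identification rigorous: one must check carefully that the surface slope really is the slope along which the reglued solid torus $V'$ splits into one $2$-handle per side, i.e. that the new meridian $\gamma$ bounds the annulus $A\subset\Sigma_2$ and that the framing used in the handlebody fact above is exactly the surface framing rather than some other integral framing. This amounts to tracking the two parallel push-offs of $K'$ on $\Sigma_2$ through the surgery and verifying that the meridian disk of $V'$ caps off $A$. A secondary point worth stating explicitly is the handlebody lemma itself (a simple closed curve representing a primitive element $\Rightarrow$ the $2$-handle attachment drops the genus by one); although standard in this area, it is the engine of the argument and should be cited or proved, and it is the unique place where the primitivity hypothesis is used in an essential way.
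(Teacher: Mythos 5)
Your proposal is correct and is essentially the paper's own argument: the paper proves this fact in one sentence by observing that surgery along the surface slope yields a Heegaard genus one manifold because of the double-primitive condition, which is exactly the decomposition $Y_\gamma(K')=W_0\cup_T W_1$ you establish via the primitive-curve $2$-handle lemma. You have simply filled in the details (the handlebody lemma and the splitting of the reglued solid torus) that the paper leaves implicit.
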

The Dehn surgery by the surface slope for the genus two surface gives rise to a Heegaard genus one manifold, because of double-primitive condition.
This is the easy proof for this fact.
Note that even if a homology sphere which includes a double-primitive knot is not $S^3$,
then the double-primitive knot becomes a lens space knot.
Gordon conjectured that double-primitive knots in $S^3$ 
are all of lens space knots in $S^3$.
This conjecture is known as {\it Berge conjecture} (Problem 1.78 in Kirby's problem list \cite{Kirby}).
This conjecture is open so far.
Any simple (1,1)-knot $\tilde{K}_{p,k}$ in a lens space gives an integral ${\mathbb Z}$HS surgery uniquely (read Section~\ref{simple11knot}).
We denote by $K_{p,k}$ the dual knot.
${\mathbb Z}$HS is the abbreviation of integral homology sphere.
The following is a second basic fact:
\begin{fact}
For relatively prime positive integers $p,k$, the dual knot $K_{p,k}$ of simple (1,1)-knot in a lens space is a double-primitive knot in a homology sphere.
\end{fact}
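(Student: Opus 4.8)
The plan is to build, directly from the $(1,1)$-structure of $\tilde K_{p,k}$, a genus two Heegaard surface of the surgered homology sphere on which the dual knot lies, and then to read off primitivity from the fact that the two bridge arcs are trivial. Write $L=V_0\cup_T V_1$ for the genus one Heegaard splitting of the lens space, and recall that the simple $(1,1)$-knot meets $T$ transversally in two points, with $\tilde K_{p,k}=\tau_0\cup\tau_1$ where $\tau_i\subset V_i$ is a boundary parallel (trivial) arc, $\tau_0$ parallel to the meridian $\alpha$ of $V_0$ and $\tau_1$ parallel to the meridian $\beta$ of $V_1$. Let $Y=L_\gamma(\tilde K_{p,k})$ be the homology sphere obtained by the (integral) surgery, let $V$ be the surgery solid torus, and let $K_{p,k}$ be its core, the dual knot.

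First I would produce the surface. The two points $\tilde K_{p,k}\cap T$ cut out two meridian disks $T\cap N(\tilde K_{p,k})$, while $\partial N(\tilde K_{p,k})\cap V_i$ is an annulus $A_i$ running alongside $\tau_i$; together $A_0\cup A_1=\partial N(\tilde K_{p,k})$. I then set
$$\Sigma_2=\bigl(T\setminus N(\tilde K_{p,k})\bigr)\cup A_0,$$
that is, I replace the two small disks of $T$ by the tube of $\partial N(\tilde K_{p,k})$ along $\tau_0$. An Euler characteristic count shows $\Sigma_2$ is a closed genus two surface, and by construction it separates $Y$ into $H_0=V_0\setminus N(\tau_0)$ on one side and $H_1=\bigl(V_1\setminus N(\tau_1)\bigr)\cup V$ on the other. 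Since $\tau_0$ is a trivial arc in the solid torus $V_0$, the piece $H_0$ is a genus two handlebody.

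The crux is to show that $H_1$ is also a genus two handlebody, and this is where the homology sphere hypothesis enters. A homology computation, using that $[\tilde K_{p,k}]$ generates $H_1(L)=\Z/p$ because $\gcd(p,k)=1$, shows that the homology sphere slope $\gamma$ meets the meridian $\mu$ of $\tilde K_{p,k}$ once; equivalently $\gamma$ is an integral slope with respect to the surface framing coming from $T$ (this is exactly the integrality recorded for the $\mathbb{Z}$HS-surgery in Section~\ref{simple11knot}). Consequently the core of the annulus $A_1$, which is a copy of $\mu$, is a longitude of $V$, so $V$ is glued to $V_1\setminus N(\tau_1)$ along an annulus whose core runs once longitudinally around $V$. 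Such a gluing is a collar attachment, giving $H_1\cong V_1\setminus N(\tau_1)$, again a genus two handlebody, so that $\Sigma_2$ is a genuine Heegaard surface. The same slope condition shows that $K_{p,k}$, the core of $V$, is isotopic to the core of $A_0\subset\Sigma_2$, so $K_{p,k}$ lies on $\Sigma_2$.

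It remains to check primitivity of $[K_{p,k}]$ in $\pi_1(H_i)\cong F_2$ for $i=0,1$. On $\Sigma_2$ the knot $K_{p,k}$ is isotopic to the core of $A_i$ (for $i=1$ via the collar identification of the previous paragraph), which is a meridian of the trivial arc $\tau_i$. Because $\tau_i$ is boundary parallel, the disk in $V_i$ realizing this parallelism becomes, after drilling $N(\tau_i)$, a properly embedded meridian disk of $H_i$ meeting $K_{p,k}$ in a single point; such a dual disk exhibits $[K_{p,k}]$ as a member of a free basis of $\pi_1(H_i)$, i.e. as a primitive element. Hence $K_{p,k}\subset\Sigma_2$ is primitive on both sides and is therefore a double-primitive knot in the homology sphere $Y$. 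The one genuinely delicate point is the slope computation $\gamma\cdot\mu=\pm1$ of the third paragraph: everything else is the soft topology of trivial arcs and collars, but without integrality of $\gamma$ the piece $H_1$ need not be a handlebody and the argument collapses.
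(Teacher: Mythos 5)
Your construction is, in architecture, exactly the proof the paper has in mind: the paper's one-sentence justification of this Fact is ``take the union of the neighborhood of $K_{p,k}$ and one solid torus of the Heegaard decomposition of $L(p,q)$'' (deferring details to \cite{S}), and that union is precisely your $H_1=(V_1\setminus N(\tau_1))\cup V$. Your surface $\Sigma_2=(T\setminus N(\tilde K_{p,k}))\cup A_0$, the handlebody claims coming from triviality of the bridge arcs, the isotopy of the core of $V$ onto the core of $A_0$, and the dual disks produced from the parallelism disks of $\tau_0,\tau_1$ are the standard way of filling in that sketch, and they are carried out correctly.

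However, one justification is wrong, and it sits at the step you yourself single out as the crux. You assert that a homology computation, using only that $[\tilde K_{p,k}]$ generates $H_1(L(p,q))\cong{\mathbb Z}/p{\mathbb Z}$, shows that a slope $\gamma$ whose filling is a homology sphere must satisfy $\gamma\cdot\mu=\pm1$. That implication is false. In the notation of Section~\ref{simple11knot}, $H_1$ of the exterior is $\langle[m],[l]\rangle/(a[m]+p[l])$, and filling along $\gamma=c[m]+d[l]$ yields a homology sphere if and only if $|ad-pc|=1$, which does not force $|d|=1$. Concretely, take the core of a Heegaard solid torus of $L(5,2)$: it is a homological generator, its exterior is itself a solid torus $V_0$, and every slope at distance one from the meridian disk slope of $V_0$ yields $S^3$; infinitely many of those slopes lie at distance greater than one from the knot meridian $\mu$ (for instance, with $a\equiv 3\bmod 5$ one checks $[m]-2[l]$ works). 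So homology-sphere-ness alone does not buy integrality, and, as you correctly note, without integrality $H_1$ need not be a handlebody and the argument collapses. Your proof survives only because integrality here is a hypothesis, not a conclusion: the paper defines $K_{p,k}$ as the dual knot of the \emph{integral} ${\mathbb Z}$HS surgery of $\tilde K_{p,k}$, and Corollary~\ref{1deter} says there is exactly one integral slope producing a homology sphere. That is what your parenthetical appeal to Section~\ref{simple11knot} actually provides. Delete the spurious ``homology computation'' sentence and invoke the definition of the surgery instead; the rest of the argument is sound.
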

The dual means the core circle of the integral ${\mathbb Z}$HS surgery of $\tilde{K}_{p,k}$.
By taking the union of the neighborhood of $K_{p,k}$ and one solid torus with respect to the Heegaard decomposition of $L(p,q)$, one can understand this fact.
For example, see \cite{S}.
These terminologies will be defined in Section~\ref{simple11knot} in this paper.
The reason for us to restrict lens space knot to $K_{p,k}$ is to control the homology sphere containing a lens space knot.
In fact in many cases, the homology sphere $Y_{p,k}$ containing $K_{p,k}$ can be Seifert homology sphere and the Heegaard genus is at most two.
For example, if one considers more general homology sphere surgery of a lens space,
the obtained homology sphere is complicated in general (possibly it may be a higher Heegaard genus ${\mathbb Z}$HS).

Berge in \cite{B}, furthermore, listed $K_{p,k}$ in $S^3$.
We call these knots {\it Berge knots} here.
The examples are organized as in {\sc Table}~\ref{s3case} (due to Rasmussen \cite{R}).
The integer $p$ is the slope (positive integer) and $k$ is the dual class of lens space surgery.

Note that as long as we deal with a lens space surgery of a homology sphere,
the dual class $k$ is regarded as an integer with $0<k<p/2$ canonically up to  taking the inverse in the multiplicative group $({\mathbb Z}/p{\mathbb Z})^\times $
and multiplying it by $-1$.

Furthermore, if a pair of coprime integers $(p,k)$ is given, it uniquely determines a dual knot $K_{p,k}$ of simple (1,1)-knot in a lens space with the lens surgery parameter $(p,k)$.
$K_{p,k}$ lies in a homology sphere $Y_{p,k}$.
In other words, $K_{p,k}$ is a representative in lens space knots with the parameter $(p,k)$.

Greene in \cite{G} proved if a lens space is constructed by an integral Dehn surgery of a knot $K$ in $S^3$, then 
there exists a Berge knot $B$ such that the parameter of $B$
is the same as the one of $K$.
This implies that Berge knots include all double-primitive knots in $S^3$.
Namely {\sc Table}~\ref{s3case} is the table of the parameters $(p,k)$ of double-primitive knots in $S^3$.

\begin{table}[htbp]
\begin{center}
\begin{tabular}{|c|c|c|c|}\hline
Type&$p$&$k$&Condition\\\hline
I&$ik\pm1$&$k$&$(i,k)=1$\\\hline
II&$ik\pm1$&$k$&$(i,k)=2$\\\hline
III&$\pm(2k\mp1)d\ (k^2)$&$k$&$d|k\pm 1,\frac{k\pm 1}{d}$ odd\\\hline
IV&$\pm(k\mp1)d\ (k^2)$&$k$&$d|2k\pm1$\\\hline
V&$\pm(k\mp 1)d\ (k^2)$&$k$&$d|k\pm1,d$ odd\\\hline
VII,VIII&$p$&$k$&$k^2\pm k\pm1=0\ (p)$\\\hline
IX&$22\ell^2+9\ell+1$&$11\ell+2$&$\ell\in {\mathbb Z}$\\\hline
X&$22\ell^2+13\ell+2$&$11\ell+3$&$\ell\in {\mathbb Z}$\\\hline
\end{tabular}
\end{center}
\caption{A list of Berge knots due to Rasmussen \cite{R}.}
\label{s3case}
\end{table}

The author in \cite{MT2} found a list of 20 families of $K_{p,k}$ with $Y_{p,k}=\Sigma(2,3,5)$ 
that the positive $p$-surgery of $K_{p,k}$ is a lens space $L(p,k^2)$.
The families consist of 19 quadratic families and 1 isolated example as illustrated in {\sc Table}~\ref{po}.
The $\ell$ is a non-zero integer and $k_2$ is the integer with $kk_2\equiv \pm1\bmod p$ and $0<k_2<p/2$.
The $g'$ stands for $2g(K)-p-1$.
It is an open question whether the table lists all $K_{p,k}$'s with $Y_{p,k}=\Sigma(2,3,5)$ or not, which it would be Greene's type statement if the question is yes.
\begin{table}[htbp]
\begin{center}
$$\begin{array}{|c|c|c|c|c|c|}\hline
\text{Type} & p & k&k_2&g'\\\hline
\A_1 & 14\ell^2+7\ell+1 & 7\ell+2&14\ell+3&-|\ell|\\\hline
\A_2 & 20\ell^2+15\ell+3 & 5\ell+2&20\ell+7&-|\ell|\\\hline
\B & 30\ell^2+9\ell+1 & 6\ell+1&15\ell+2&-|\ell|\\\hline
\C_1 & 42\ell^2+23\ell+3 & 7\ell+2&42\ell+11&-|\ell|\\\hline
\C_2 & 42\ell^2+47\ell+13 & 7\ell+4&42\ell+23&-|\ell|\\\hline
\D_1 & 52\ell^2+15\ell+1 & 13\ell+2&52\ell+7&-|\ell|\\\hline
\D_2 & 52\ell^2+63\ell+19 & 13\ell+8&52\ell+31&-|\ell|\\\hline
\E_1 & 54\ell^2+15\ell+1 & 27\ell+4&54\ell+7&-|\ell|\\\hline
\E_2 & 54\ell^2+39\ell+7 & 27\ell+10&54\ell+19&-|\ell|\\\hline
\F_1 & 69\ell^2+17\ell+1 & 23\ell+3&69\ell+8&-2|\ell|\\\hline
\F_2 & 69\ell^2+29\ell+3 & 23\ell+5&69\ell+14&-2|\ell|\\\hline
\G_1 & 85\ell^2+19\ell+1 & 17\ell+2&85\ell+9&-2|\ell|\\\hline
\G_2 & 85\ell^2+49\ell+7 & 17\ell+5&85\ell+24&-2|\ell|\\\hline
\H_1 & 99\ell^2+35\ell+3 & 11\ell+2&99\ell+17&-2|\ell|\\\hline
\H_2 & 99\ell^2+53\ell+7 & 11\ell+3&99\ell+26&-2|\ell|\\\hline
\I_1 & 120\ell^2+16\ell+1 & 12\ell+1&20\ell+1&-2|\ell|\\\hline
\I_2 & 120\ell^2+20\ell+1 & 20\ell+2&30\ell+2&-2|\ell|\\\hline
\I_3 & 120\ell^2+36\ell+3 & 12\ell+2&30\ell+4&-2|\ell|\\\hline
\J &  120\ell^2+104\ell+22 & 12\ell+5&20\ell+9&-|2\ell+1|\\\hline
\K & 191 & 15&51&-2\\\hline
\end{array}$$
\caption{$K_{p,k}$ for each $(p,k)$ is a lens space knot in the Poincar\'e homology sphere yielding $L(p,k^2)$.}
\label{po}
\end{center}
\end{table}
It is conjectured that {\sc Table}~\ref{po} lists all $K_{p,k}$'s with $Y_{p,k}=\Sigma(2,3,5)$ with $2g(K)<p+1$.
The similar conjecture is Conjecture 1 in \cite{R}.
This genus restriction is needed because as mentioned in \cite{MT2}, there is a counterexample.
For a partial circumstantial evidence for this conjecture, for example see \cite{MT3}.
On the other hand, in \cite{MT5} it is shown that $-\Sigma(2,3,5)$ never admit any
positive lens space knot and $\Sigma(2,3,5)\#(-\Sigma(2,3,5))$ never have any lens space knots.
Our main question is the following:
\begin{ques}
Which homology spheres other than $S^3$ or $\Sigma(2,3,5)$ include a double-primitive knot?
Does such a homology sphere have a list analogous to {\sc Table}~\ref{s3case} or \ref{po}?
\end{ques}
\subsection{Tables of lens space surgeries in Brieskorn homology spheres.}
In this paper we mainly focus on $K_{p,k}$ in some Brieskorn homology spheres and some graph homology spheres.
We give a table of $K_{p,k}$ in $\Sigma(2,3,7)$.
\begin{thm}
\label{237casethm}
{\sc Table}~\ref{lens237} is a collection of examples of $K_{p',k}$ with $Y_{p',k}=\Sigma(2,3,7)$.
The positive $p'$-surgery of $K_{p',k}$ gives the lens space $L(p',k^2)$ in the table.
Here $\ell$ in the table is a non-zero integer.
\end{thm}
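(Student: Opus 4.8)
The plan is to separate the two assertions, because they have very different content. The statement that the positive $p'$-surgery yields $L(p',k^2)$ is not special to $\Sigma(2,3,7)$: for any coprime positive pair $(p',k)$ the dual knot $K_{p',k}$ of the simple $(1,1)$-knot $\tilde K_{p',k}$ is a double-primitive knot by the second basic fact, so by the first basic fact its surface-slope surgery — which is the integral slope $p'$ — produces a genus-one Heegaard manifold, and by the construction of the dual class this manifold is $L(p',k^2)$. Thus the whole content to be verified is the identification of the ambient homology sphere, namely $Y_{p',k}=\Sigma(2,3,7)$ for every pair $(p',k)$ appearing in {\sc Table}~\ref{lens237}.

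To identify $Y_{p',k}$ I would first make the knot concrete. The simple $(1,1)$-knot $\tilde K_{p',k}$ sits in $L(p',k^2)$ in genus-one bridge position, and from its doubly-pointed Heegaard diagram one reads off the unique Dehn filling of the knot complement that produces an integral homology sphere; this records $K_{p',k}\subset Y_{p',k}$ as an explicit surgery link. The essential observation is that in each family $k$ depends on $\ell$ linearly while $p'$ depends on $\ell$ quadratically, and I would interpret this dependence geometrically as the insertion of $\ell$ full twists along a fixed curve disjoint from a fixed representative of the knot. This reduces the whole family to a single base configuration together with a twisting operation, and the quadratic growth of $p'$ is exactly the effect of $\ell$ full twists on the surface framing.

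With the surgery link in hand, the identification is a Kirby-calculus computation: using Rolfsen twists to absorb the $\ell$ full twists, slam-dunks to eliminate the rationally-framed meridional components, and handle slides, I would reduce the diagram (after deleting the knot $K_{p',k}$) to a standard description of $\Sigma(2,3,7)$ — for instance $-1/n$-surgery on a trefoil, or the star-shaped plumbing on the weighted tree with branch orders $2,3,7$. As a consistency check one verifies $H_1(Y_{p',k})=0$ and matches an invariant such as the Casson invariant of $\Sigma(2,3,7)$, which rules out the diagram collapsing to a distinct (e.g.\ orientation-reversed or different Seifert) homology sphere. Since this paper extends the $\Sigma(2,3,5)$ families of \cite{MT2}, I would model the base configuration on the diagrams there and modify the central framing to pass from $\Sigma(2,3,5)=\Sigma(2,3,6\cdot1-1)$ to $\Sigma(2,3,7)=\Sigma(2,3,6\cdot1+1)$.

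The hard part will be carrying out the reduction uniformly in $\ell$ rather than checking finitely many cases: one must show that the twisting operation commutes with the Kirby moves so that the ambient manifold is independent of $\ell$ while the surgery parameter scales correctly. A secondary subtlety is the bookkeeping for the dual class, since the listed $k$ must be normalized to $0<k<p'/2$ up to $k\mapsto k^{-1}$ and $k\mapsto -k$ in $({\mathbb Z}/p'{\mathbb Z})^\times$, and one has to confirm that the resulting lens space is $L(p',k^2)$ with this $k$ and not with a competing residue. Establishing twist-invariance of the reduction together with this normalization completes the verification for all entries of {\sc Table}~\ref{lens237}.
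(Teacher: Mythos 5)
Your reduction in the first paragraph is legitimate under the paper's conventions (once a knot is identified as $K_{p',k}$, the dual of the simple $(1,1)$-knot $\tilde K_{p',k}\subset L(p',k^2)$, the fact that the returning surgery is positive $p'$-surgery follows from the linking-form setup of Section~\ref{simple11knot}), and your idea of modeling the diagrams on the $\Sigma(2,3,5)$ ones with the multiplicity $5$ traded for $7$ is exactly the paper's move: its proof substitutes $(a,b)=(7,3)$ into the parametrized diagrams already built in the appendices via the pillowcase method and the Montesinos trick, computes the resulting continued fraction, reads the dual class off the $b$-sequence, and gets positivity from Corollary~\ref{positivesurgery}. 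So in outline you and the paper agree.

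The genuine gap is in what you call the essential observation. You propose that each family is generated from a single base configuration by $\ell$ full twists along one fixed curve disjoint from the knot, with the quadratic growth of $p'$ being ``exactly the effect of $\ell$ full twists on the surface framing.'' That mechanism cannot work: if $c$ is an unknotted curve disjoint from $K_0$ whose spanning disk meets $K_0$ algebraically $w$ times, then $\ell$-fold twisting along $c$ preserves the ambient manifold and changes the carried-along (surface) slope by $\ell w^2$, which is \emph{linear} in $\ell$. No single-curve twist family produces $p'=18\ell^2+9\ell+1$. In the paper's families the $\ell$-dependence genuinely occupies two twist regions, as one sees in the continued fractions (e.g.\ $[2,\ell+1,9,-\ell]$ for type $\A_1$) and in the two $\ell$-boxes of the appendix figures; the uniformity-in-$\ell$ Kirby calculus must therefore be run on diagrams carrying two $\ell$-parameters (equivalently, twisting along a pair of curves), not on one base configuration plus one twisting operation, and your proposed ``twist commutes with Kirby moves'' argument collapses as stated. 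A second, smaller misplacement: the orientation issue you relegate to a consistency check is where the paper's actual work lies. Its calculus naturally yields $-\Sigma(2,3,7)$ and $L(p',9\ell^2+9\ell+1)=L(p',-(9\ell+2)^2)$, and only after reversing orientation and invoking Corollary~\ref{positivesurgery} does one get positive integral surgeries in $\Sigma(2,3,7)$ yielding $L(p',k^2)$; a Casson-invariant check would detect the sign, but this verification is the main step, not an afterthought.
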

We conjecture the following statement on the genus $g(K_{p',k})$. 
\begin{conj}
The genus $g(K_{p',k})$ for $(p',k)$ which is any parameter in {\sc Table}~\ref{lens237} is $(g'+p'+1)/2$, where $g'$ is the value in the table.
\end{conj}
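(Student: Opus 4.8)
The plan is to convert the genus statement into a computation in knot Floer homology and then into arithmetic. Writing $\widehat{HFK}$ for knot Floer homology, the asserted equality $g(K_{p',k})=(g'+p'+1)/2$ is the same as $2g(K_{p',k})-p'-1=g'$, and by the genus-detection theorem (Ozsv\'ath--Szab\'o, and Ni for knots in a general homology sphere) the Seifert genus $g(K_{p',k})$ equals the top Alexander grading $\max\{s:\widehat{HFK}(\Sigma(2,3,7),K_{p',k},s)\ne 0\}$. Thus the whole conjecture reduces to identifying this top grading and checking, family by family, that twice it minus $p'+1$ equals the tabulated $g'$.

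The essential structural feature --- and the reason the $\Sigma(2,3,7)$ case is harder than the $\Sigma(2,3,5)$ case of Table~\ref{po} --- is that $\Sigma(2,3,7)$ is \emph{not} an L-space: $\widehat{HF}(\Sigma(2,3,7))$ has rank three ($HF_{\mathrm{red}}\ne 0$). Since a positive surgery of coefficient $n\ge 2g-1$ on a knot in $Y$ would force every large-surgery complex $\widehat{A}_s$ with $|s|\ge g$ to compute $\widehat{HF}(Y)$, an L-space surgery with $n\ge 2g-1$ can only occur when $Y$ is itself an L-space. Hence for $\Sigma(2,3,7)$ the lens space surgeries must be \emph{small}, $p'<2g(K_{p',k})-1$, i.e. $g'>0$, and the knot Floer complex $CFK^\infty(\Sigma(2,3,7),K_{p',k})$ is not a staircase. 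I would therefore compute $\widehat{HFK}$ directly, either from the explicit doubly-pointed genus-two Heegaard diagram carried by the double-primitive presentation given elsewhere in the paper, or through the surgery duality with the simple $(1,1)$-knot $\tilde K_{p',k}\subset L(p',k^2)$, whose Floer homology is rank one in each ${\rm Spin}^c$ structure and is computed combinatorially from $(p',k)$.

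The hard part will be the non-L-space, small-surgery bookkeeping. One must run the full mapping-cone formula for $\Sigma(2,3,7)_{p'}(K_{p',k})$, in which the $\widehat{B}_s\simeq \widehat{CF}(\Sigma(2,3,7))$ summands contribute the rank-three homology of $\Sigma(2,3,7)$, and then use the hypothesis that the surgery is the L-space $L(p',k^2)$ to force exactly the cancellations that pin down the filtered complex and hence its extremal Alexander grading. A more geometric alternative, which I expect to be cleaner for the families that arise from the Seifert structure, is to recognize $K_{p',k}$ as a distinguished curve in the Seifert fibration of $\Sigma(2,3,7)$ (so that it is fibered) and to compute the Euler characteristic of its fiber directly from the Seifert invariants; fiberedness would then give $g(K_{p',k})$ outright.

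Finally I would close the estimate from both sides and do the arithmetic. For an upper bound $g(K_{p',k})\le (g'+p'+1)/2$ the concrete knot diagrams in the paper supply a Seifert surface of the predicted genus; the knot Floer (or fibration) computation supplies the matching lower bound. It then remains to evaluate the top Alexander grading as an explicit function of $\ell$ for each row of Table~\ref{lens237} --- governed by the residues $i k_2 \bmod p'$ attached to the parameter $(p',k)$ --- and to verify in each case that $2g(K_{p',k})-p'-1$ equals the listed value $g'$. I expect the arithmetic to be routine once the top grading is identified; the genuine difficulty is the Floer-theoretic identification in the non-L-space setting.
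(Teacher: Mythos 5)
This statement is left as a \emph{conjecture} in the paper --- the author supplies no proof of it, and your proposal does not supply one either. What you have written is a research plan, not an argument: every load-bearing step is deferred with ``I would compute,'' ``I expect,'' or ``one must run.'' The genus-detection reduction ($g$ equals the top nonvanishing Alexander grading of $\widehat{HFK}$) and the observation via Rasmussen's theorem that $\Sigma(2,3,7)$ being a non-L-space forces $p'\le 2g-1$ are both correct framing, but they only restate the problem. The actual content of the conjecture --- identifying the top Alexander grading of $K_{p',k}$ as an explicit function of $\ell$ for each of the twenty families and matching it against $g'$ --- is exactly what you leave undone, and it is the part that is genuinely hard: in the non-L-space, small-surgery regime the mapping-cone formula does not degenerate to a staircase computation, and extracting the filtered chain homotopy type of $CFK^\infty(\Sigma(2,3,7),K_{p',k})$ from the hypothesis that the surgery output is $L(p',k^2)$ is an open problem, not routine bookkeeping.

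Two specific steps in your outline would also fail as stated. First, your ``geometric alternative'' assumes $K_{p',k}$ is fibered because it sits in a Seifert-fibered manifold; fiberedness of lens space knots is a theorem (Ni, and Rasmussen in the L-space setting) whose known proofs use precisely the Floer-theoretic monicity you are trying to avoid, and it is not established for knots in non-L-space homology spheres --- the knots $K_{p',k}$ are in any case not fibers or curves of the Seifert fibration, since their surgeries kill the fibration. Second, your claimed upper bound ``the concrete knot diagrams in the paper supply a Seifert surface of the predicted genus'' is asserted without construction; the diagrams in the appendices are surgery/plumbing descriptions, and nobody has exhibited from them a Seifert surface of genus $(g'+p'+1)/2$. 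So both the lower bound and the upper bound in your two-sided squeeze are missing. The honest conclusion is that the statement remains a conjecture after your proposal, just as it is in the paper.
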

As one can see, {\sc Table}~\ref{lens237} is parallel to {\sc Table}~\ref{po}.
The slope $p$ in {\sc Table}~\ref{po} corresponding slope $p'$ is related 
by the formula by the last column in {\sc Table}~\ref{lens237}.
For example, on the type $\A_1$, between corresponding two lens space surgeries, the formula $(p+p')/2=(4\ell+1)^2$ is satisfied.
\begin{table}[htbp]
\begin{center}
\begin{tabular}{|l|l|l||l|l|}\hline
\text{Type}&  $p'$&$k$                             &  $g'$        & $\frac{p'+p}2$ \\\hline
$A_1$ &  $18\ell^2+9\ell+1$ & $9\ell+2$     & $|\ell|$     & $(4\ell+1)^2$\\ \hline
$A_2$ &  $30\ell^2+25\ell+5$ & $5\ell+2$    & $|\ell|$     &$(5\ell+2)^2$ \\ \hline
$B$   &  $42\ell^2+15\ell+1$&$ 6\ell+1$    & $|\ell|$     &$(6\ell+1)^2$ \\ \hline
$C_1$ &  $56\ell^2+33\ell+5$& $7\ell+2$    & $|\ell|$     &$(7\ell+2)^2$ \\ \hline
$C_2$ &  $56\ell^2+65\ell+19$&$7\ell+4$   & $|\ell|$     &$(7\ell+4)^2$ \\ \hline
$D_1$ &  $76\ell^2+17\ell+1$&$19\ell+2$   & $|\ell|$     &$(8\ell+1)^2$ \\ \hline
$D_2$ &  $76\ell^2+97\ell+31$&$19\ell+12$ & $|\ell|$     &$(8\ell+5)^2$ \\ \hline
$E_1$ &  $74\ell^2+17\ell+1$&$37\ell+4$   & $|\ell|$     &$(8\ell+1)^2$ \\ \hline
$E_2$ &  $74\ell^2+57\ell+11$&$37\ell+14$ & $|\ell|$     &$(8\ell+3)^2$ \\ \hline
$F_1$ &  $93\ell^2+19\ell+1$&$31\ell+3$   & $|2\ell|$    &$(9\ell+1)^2$  \\ \hline
$F_2$ &  $93\ell^2+43\ell+5$&$31\ell+7$   & $|2\ell|$    &$(9\ell+2)^2$ \\ \hline
$G_1$ &  $115\ell^2+21\ell+1$&$23\ell+2$  & $|2\ell|$    &$(10\ell+1)^2$ \\ \hline
$G_2$ &  $115\ell^2+71\ell+11$&$23\ell+7$ & $|2\ell|$    &$(10\ell+3)^2$ \\ \hline
$H_1$ &  $143\ell^2+53\ell+5$&$11\ell+2$  & $|2\ell|$    &$(11\ell+2)^2$ \\ \hline
$H_2$ &  $143\ell^2+79\ell+11$&$11\ell+3$ & $|2\ell|$    &$(11\ell+3)^2$ \\ \hline
$I_1$ &  $168\ell^2+32\ell+1$&$12\ell+1$  & $|2\ell|$    &$(12\ell+1)^2$ \\ \hline
$I_2$ &  $168\ell^2+60\ell+5$&$12\ell+2$  & $|2\ell|$    &$(12\ell+2)^2$ \\ \hline
$I_3$ &  $168\ell^2+28\ell+1$&$28\ell+2$  & $|2\ell|$    &$(12\ell+1)^2$ \\ \hline
$J$   &  $168\ell^2+136\ell+28$&$12\ell+5$ & $|2\ell+1|$ & $(12\ell+5)^2$\\ \hline
$K$   &  $259$&$15$ & $2$ & $15^2$\\ \hline
\end{tabular}
\caption{$K_{p,k}$ for each $(p,k)$ is a lens space knot in $\Sigma(2,3,7)$ yielding $L(p,k^2)$.}
\label{lens237}
\end{center}
\end{table} 
Here we address the following conjectures combining the statement above.
\begin{conj}
{\sc Table}~\ref{po} is a list of all $K_{p,k}$'s in $\Sigma(2,3,5)$ if $2g(K)< p+1$
and {\sc Table} \ref{lens237} is a list of all $K_{p',k}$'s in $\Sigma(2,3,7)$ if $2g(K)>p'+1$.
\end{conj}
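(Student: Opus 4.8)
The realization direction --- that every tabulated parameter $(p,k)$ is genuinely realized by a knot $K_{p,k}$ lying in the stated sphere --- is provided by Theorem~\ref{237casethm} and by \cite{MT2}, so only the \emph{completeness} direction remains: for coprime $(p,k)$ satisfying the stated genus bound, one must show that $Y_{p,k}\cong\Sigma(2,3,5)$ (resp. $\Sigma(2,3,7)$) forces $(p,k)$ into {\sc Table}~\ref{po} (resp. {\sc Table}~\ref{lens237}). Since $p$-surgery on $K_{p,k}\subset Y_{p,k}$ returns $L(p,k^2)$, an instance of the conjecture is exactly a knot in $\Sigma(2,3,5)$ (resp. $\Sigma(2,3,7)$) admitting a positive integral lens space surgery, and the task is to obstruct every such surgery outside the list. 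The plan is to transplant Greene's method \cite{G}, which settled the $S^3$ analogue (completeness of {\sc Table}~\ref{s3case}), from $S^3$ to these two Brieskorn spheres.

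First I would build a definite $4$-manifold from the surgery. Both base spheres bound canonical negative-definite plumbings: $\Sigma(2,3,5)=\partial X_{E_8}$, and $\Sigma(2,3,7)$ bounds the negative-definite plumbing $X_{2,3,7}$ coming from its Seifert structure. Capping the trace of the $p$-surgery (a cobordism from $Y_{p,k}$ to $L(p,k^2)$ carrying the framing $p$) with such a filling on the $Y$-side, and with the negative-definite linear plumbing $P(p,k^2)$ of the lens space on the other side, yields a closed $4$-manifold $Z$. The $d$-invariant surgery formula of Ozsv\'ath--Szab\'o, together with the genus hypothesis, is the mechanism that should force the intersection form of $Z$ to be definite: the bound $2g(K)<p+1$ places the surgery in the large-surgery regime of the mapping cone, so the correction terms of $L(p,k^2)$ are governed solely by $d(\Sigma(2,3,5))=2$ and the local $h$-invariants of the knot. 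The \emph{sign} of $d(Y)$ is exactly what dictates the direction of the inequality: because $\Sigma(2,3,7)$ is not an $L$-space and has $d=0$ with a nontrivial reduced summand, the clean definite regime is the \emph{small}-surgery one $2g(K)>p'+1$, which explains the reversal between the two tables.

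With $Z$ negative definite, Donaldson's diagonalization theorem forces its form to be $\langle-1\rangle^{\oplus N}$. Consequently the linear-plumbing lattice $\Lambda(p,k^2)$, together with $E_8$ (resp. the $X_{2,3,7}$ lattice), embeds into the standard diagonal lattice $\mathbb{Z}^N$, and the class carried by the surgery handle is encoded by a distinguished vector $\sigma$. As in Greene's treatment, the linearity of $\Lambda(p,k^2)$ combined with the embedding translates into a \emph{changemaker}-type condition on $\sigma$; the genuinely new feature is that $\sigma$ must now embed compatibly with a fixed $E_8$ (resp. $X_{2,3,7}$) summand already sitting inside $\mathbb{Z}^N$. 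Solving this constrained lattice-embedding problem and reading off $(p,k)$ from the resulting changemaker vectors should reproduce precisely the families $\A_1,\dots,\K$ of the two tables, completing the classification.

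The hard part will be this final lattice classification. In the $S^3$ case the ambient obstruction lattice is purely diagonal, and Greene's changemaker calculus disposes of it; here one must extend that calculus to embeddings of $(\text{linear lattice})\oplus E_8$ (resp. $\oplus$ the $X_{2,3,7}$ lattice) into $\mathbb{Z}^N$, which opens up many more candidate embeddings and a substantially heavier case analysis. A second delicate point is showing the genus bound is \emph{sharp}: the author records a counterexample once the bound is dropped, so the mapping-cone step must be arranged so that definiteness genuinely fails outside the stated range rather than merely being unprovable there. These two difficulties are what keep the statement at the level of a conjecture rather than a theorem.
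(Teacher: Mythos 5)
Your proposal is not a proof, and it should be said up front that the paper itself contains none either: the statement is posed there as an open conjecture, with only ``partial circumstantial evidence'' cited from \cite{MT3}. So the attempt must stand entirely on its own, and it does not. You correctly reduce the problem to completeness (realization being Theorem~\ref{237casethm} and \cite{MT2}), but the decisive step of your program --- classifying embeddings of $(\text{linear lattice})\oplus E_8$ (resp.\ a $\Sigma(2,3,7)$-lattice) into ${\mathbb Z}^N$ under a changemaker-type constraint and verifying that the solutions are exactly the parameters of {\sc Table}~\ref{po} and {\sc Table}~\ref{lens237} --- is deferred with the word ``should,'' and your own closing paragraph concedes that exactly this is what keeps the statement conjectural. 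A program whose core case analysis is left open is not a proof of the conjecture.

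More seriously, the mechanism you propose is provably unworkable in the $\Sigma(2,3,5)$ case, not merely unfinished. Donaldson diagonalization needs a closed \emph{definite} 4-manifold, and no orientation bookkeeping produces one here: the trace of a positive $p$-surgery is positive definite (the handle class has square $+p$), so the manifold you describe, $X_{E_8}\cup_{\Sigma(2,3,5)}W\cup_{L(p,k^2)}(\text{linear plumbing})$, has intersection form $(-E_8)\oplus\langle p\rangle\oplus(\text{negative definite})$, which is indefinite and to which Donaldson's theorem says nothing. To run Greene's actual gluing you would need either a positive definite filling of $\Sigma(2,3,5)$ or a negative definite filling of $-\Sigma(2,3,5)$, and \emph{neither exists}: gluing either hypothetical filling to the negative definite $E_8$-plumbing would produce a closed negative definite 4-manifold whose form contains $-E_8$, contradicting Donaldson's theorem itself (equivalently, $d(-\Sigma(2,3,5))=-2<0$ obstructs the negative definite filling). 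Your claim that the Ozsv\'ath--Szab\'o surgery formula ``forces the intersection form of $Z$ to be definite'' has the logic backwards: correction terms can obstruct hypothetical definite fillings, but they can never make the form of a concretely constructed cobordism definite. Even in the $\Sigma(2,3,7)$ case, what the argument requires is a \emph{positive} definite filling of $\Sigma(2,3,7)$ (equivalently a negative definite filling of $-\Sigma(2,3,7)$), not the negative definite Seifert plumbing you name, and you neither construct nor discuss one ($d(\pm\Sigma(2,3,7))=0$ removes the obstruction but provides no filling). Finally, the genus hypotheses are not ``regimes'' you arrange: by Theorem~\ref{rasmussenthm} they are essentially automatic from whether the ambient sphere is an L-space, with the strict inequality only excluding the known boundary counterexample; so no step of a correct proof can consist of ``arranging definiteness to fail outside the stated range.'' This is why the existing partial evidence (\cite{MT3}) proceeds by direct comparison of correction terms rather than by lattice embeddings.
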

We recall estimate $p\le 2g(K)-1$ in the cases of lens space surgeries over the non-L-space homology sphere 
(see Theorem~\ref{rasmussenthm}).
%\begin{prop}[Proposition 8.1 in \cite{OS1}]
%Let $Y$ be an L-space homology sphere.
%Let $K\subset Y$ be a lens space knot whose dual knot of the surgery is a simple (1,1)-knot.
%Then $HF^+(Y_0(K),i)\cong {\mathbb F}[U]/U^{t_i}$,
%where $t_i:=t_i(K)$ is the $i$-th Turaev torsion.
%\end{prop}
%In \cite{OS1} this proposition is proven in the case of $Y=S^3$ only.
%The general L-space homology spheres case is proven similarly, hence we omit the proof here.

Rasmussen in \cite{R} proved the following:
\begin{thm}[\cite{R}]
\label{rasmussenthm}
Let $K$ be a lens space knot in a homology sphere $Y$.
If $2g(K)-1<p$ then, $Y$ is an L-space, while if $2g(K)-1>p$, then $Y$ is a non-L-space.
\end{thm}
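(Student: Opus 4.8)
The plan is to compute $\widehat{HF}(Y_{p}(K))$ from the knot Floer complex of $K\subset Y$ through Ozsv\'{a}th--Szab\'{o}'s integer surgery formula, exploiting that any lens space is an L-space. Fix coefficients $\mathbb{F}=\mathbb{Z}/2\mathbb{Z}$, set $g=g(K)$, and let $\widehat{A}_{s}$ $(s\in\mathbb{Z})$ denote the large-surgery complexes together with $\widehat{B}_{s}\cong\widehat{CF}(Y)$ and the structure maps $\widehat{v}_{s},\widehat{h}_{s}\colon\widehat{A}_{s}\to\widehat{B}_{s}$. I will draw on three standard facts: (i) $\widehat{v}_{s}$ is a quasi-isomorphism for $s\ge g$ and $\widehat{h}_{s}$ is one for $s\le -g$ (with the symmetry $\widehat{A}_{s}\simeq\widehat{A}_{-s}$ exchanging the two roles), so that $H_{*}(\widehat{A}_{s})\cong\widehat{HF}(Y)$ whenever $|s|\ge g$; (ii) the large surgery formula $\widehat{HF}(Y_{N}(K),[s])\cong H_{*}(\widehat{A}_{s})$, valid for $N\ge 2g-1$; and (iii) the non-vanishing theorem, which guarantees $\widehat{HF}\neq 0$ in every spin$^{c}$ structure of a rational homology sphere. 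Since $Y_{p}(K)$ is a lens space, $\operatorname{rank}\widehat{HF}(Y_{p}(K))=p$ throughout.

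For the first assertion suppose $2g-1<p$; as $p$ and $g$ are integers this means $p\ge 2g$, so (ii) applies and expresses $\widehat{HF}(Y_{p}(K))$ as $\bigoplus_{s}H_{*}(\widehat{A}_{s})$ over a set of $p$ representatives $s$ with $|s|\le p/2$. By (iii) each summand has rank at least one, and since the ranks total $p$ every summand has rank exactly one. Because $p\ge 2g$, the range $|s|\le p/2$ contains an index with $|s|\ge g$, and for it (i) gives $H_{*}(\widehat{A}_{s})\cong\widehat{HF}(Y)$. Hence $\operatorname{rank}\widehat{HF}(Y)=1$ and $Y$ is an L-space.

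For the second assertion I prove the contrapositive: if $Y$ is an L-space then $p\ge 2g-1$, excluding $2g-1>p$. When $Y$ is an L-space each $\widehat{B}_{s}\simeq\mathbb{F}$, so the surgery mapping cone is a map $\widehat{D}$ whose target has one-dimensional homology in each summand. The long exact sequence of the cone yields $\operatorname{rank}\widehat{HF}(Y_{p}(K))\ge p+\sum_{s}\bigl(\operatorname{rank}H_{*}(\widehat{A}_{s})-1\bigr)$, the sum running over the finitely many $s$ that survive truncation, with equality forcing both that $\widehat{D}$ is surjective on homology and that every surviving $\widehat{A}_{s}$ has rank one. Since the left side equals $p$, equality holds; together with $H_{*}(\widehat{A}_{s})\cong\widehat{HF}(Y)\cong\mathbb{F}$ for $|s|\ge g$ this says $K$ is an L-space knot, and $\widehat{D}$ is onto. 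For an L-space knot the structure maps become rigid --- $\widehat{v}_{s}$ is an isomorphism exactly for $s\ge g$ and vanishes otherwise, and symmetrically for $\widehat{h}_{s}$ --- so after cancelling the quasi-isomorphisms used in truncation the surviving portion of $\widehat{D}$ is the zero map out of the generators with $-g<s<g$, while its surviving targets are the copies $\widehat{B}_{t}$ with $p-g<t<g$. Surjectivity then forces this set of targets to be empty, which holds precisely when $p\ge 2g-1$.

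The heart of the argument is the second direction, where the genus bound surfaces only after establishing that $K$ is an L-space knot and that the maps $\widehat{v}_{s},\widehat{h}_{s}$ take their sharp form; I extract the former from the cone rank inequality together with the L-space hypotheses on both $Y$ and $Y_{p}(K)$, and the latter is the genus-detecting behaviour of knot Floer homology, namely the jump of the $U$-exponents $V_{s}$ at $s=g$. The main obstacle is the truncation bookkeeping: tracking exactly which $\widehat{A}_{s}$ and $\widehat{B}_{t}$ survive, including the overlapping cancellations that occur once $p\ge 2g$ and that bring the total rank back down to $p$, is routine but error-prone in the indices. Everything else --- the two surgery formulas, non-vanishing, and $\operatorname{rank}\widehat{HF}(L(p,q))=p$ --- is quoted from the literature.
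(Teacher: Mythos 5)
First, a point of comparison: the paper contains no proof of this theorem at all --- it is imported verbatim from Rasmussen's paper [R] --- so your proposal can only be measured against Rasmussen's argument, which it does parallel in outline (large-surgery complexes $\widehat{A}_s$ and the integer surgery mapping cone). Your first direction is correct and complete modulo standard citations: large surgery formula for $p\ge 2g$, non-vanishing of $\widehat{HF}$ in each spin$^c$ structure of a rational homology sphere, and the stabilization $H_*(\widehat{A}_s)\cong\widehat{HF}(Y)$ for $|s|\ge g$ together force $\operatorname{rank}\widehat{HF}(Y)=1$. Likewise, in the second direction, your rank bookkeeping is sound: with $\#\{\widehat{A}_s\}-\#\{\widehat{B}_t\}=p$ in the truncated cone, the inequality $\operatorname{rank}\widehat{HF}(Y_p(K))\ge p+\sum_s(\operatorname{rank}H_*(\widehat{A}_s)-1)$ is correct, and equality does force every $H_*(\widehat{A}_s)\cong\mathbb{F}$ and surjectivity of $\widehat{D}$ on homology, with no need to worry about cancellation-induced zig-zag arrows.

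The genuine gap is the "rigidity" step. You assert that, because $K$ is now an L-space knot, $(\widehat{v}_s)_*$ vanishes for all $s<g$ (and symmetrically for $\widehat{h}_s$), citing "the jump of the $U$-exponents $V_s$ at $s=g$." That jump --- $V_{g-1}>0$, equivalently $\nu^+=g$ --- is a theorem about L-space knots in $S^3$, proved via the Ozsv\'ath--Szab\'o staircase structure theorem; it is not a quotable fact for a knot with rank-one $H_*(\widehat{A}_s)$ inside an arbitrary L-space homology sphere $Y$, which is exactly the setting here (think $Y=\Sigma(2,3,5)$). Without it, your surjectivity argument only yields $p\ge 2\nu-1$, where $\nu=\min\{s:(\widehat{v}_s)_*\neq 0\}$ satisfies merely $\nu\le g$, and the theorem does not follow. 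Closing this gap is precisely the technical content of [R]: one needs (a) genus detection $\widehat{HFK}(Y,K,g)\neq 0$ for null-homologous knots in general manifolds (Ni's theorem, with the reducible-complement case handled separately), (b) an argument such as: the kernel of $v^+_{g-1}\colon A^+_{g-1}\to B^+$ is the single box $C\{i=-1,\,j=g-1\}$, whose homology is $\widehat{HFK}(Y,K,g)$, so that $(v^+_{g-1})_*$ cannot be an isomorphism of towers and hence $V_{g-1}>0$ once all $H_*(A^+_s)\cong\mathcal{T}^+$; and (c) monotonicity of the $V_s$ (Rasmussen's local $h$-invariants, developed for knots in homology spheres) to propagate the vanishing from $s=g-1$ to all $s<g$. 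Your sketch identifies the right mechanism but front-loads the heart of the proof into an unjustified citation.
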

%This means any lens space surgery on a non-L-space homology satisfies $\frac{p+1}{2}\le g(K)$.
%Does the genus of such a knot have an upper bound related to the slope?

We give families of $K_{p,k}$ in $\Sigma(2,3,6n\pm1)$ as extension of lens space surgeries in {\sc Table} \ref{po} and \ref{lens237}.
\begin{thm}
\label{sigma236npm1}
{\sc Table}~\ref{lens236n1} is a collection of examples of $K_{p,k}$ in $\Sigma(2,3,6n\pm1)$ for a non-zero integer $\ell$.
The positive $p$-surgery of $K_{p,k}$ gives $L(p,k^2)$.
\end{thm}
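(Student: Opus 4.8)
The plan is to split the statement into its two assertions and to notice that only one of them carries real content. That the positive $p$–surgery on $K_{p,k}$ returns the lens space $L(p,k^2)$ is already forced by the way $K_{p,k}$ is set up: $K_{p,k}$ is by definition the dual knot of the simple $(1,1)$–knot $\tilde K_{p,k}$ whose lens surgery parameter is $(p,k)$, so performing the $p$–surgery merely reverses the dual construction and recovers the ambient lens space, which for the normalized dual class $k$ is $L(p,k^2)$ (this is exactly the content recalled after Table~\ref{s3case} and developed in Section~\ref{simple11knot}, and it does not depend on which homology sphere $Y_{p,k}$ houses the knot). Consequently the entire weight of the theorem lies in identifying $Y_{p,k}$ with $\Sigma(2,3,6n\pm1)$, and I would organize the whole proof around that identification.

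To identify $Y_{p,k}$ I would realize the families geometrically, matching the paper's program of extending the seed families rather than recomputing each example from scratch. First I would present $\Sigma(2,3,6n\pm1)$ as $\mp 1/n$–surgery on a trefoil $T\subset S^3$; at $n=1$ this reproduces exactly $\Sigma(2,3,5)$ and $\Sigma(2,3,7)$, the homology spheres underlying Tables~\ref{po} and~\ref{lens237}. Using the explicit diagrams of the $n=1$ knots constructed earlier in the paper, I would redraw each seed $K_{p,k}$ as a curve $K$ in the complement of $T$ and record its linking number $\lambda=\mathrm{lk}(K,T)$, which will be an explicit linear function of $\ell$ for each type. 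The passage from $n=1$ to general $n$ is then a Rolfsen twist: replacing the coefficient $\mp 1$ on $T$ by $\mp 1/n$ is an $(n-1)$–fold twist along a meridian disk of $T$, which converts the ambient space to $\Sigma(2,3,6n\pm1)$ and carries $K$ to the knot recorded in Table~\ref{lens236n1}. In this picture the ``$6n-1$'' half of the table extends the $\Sigma(2,3,5)$ seeds of Table~\ref{po} and the ``$6n+1$'' half extends the $\Sigma(2,3,7)$ seeds of Table~\ref{lens237}.

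Next I would read off the transformed surgery data. Under an $(n-1)$–fold twist a curve of linking number $\lambda$ has its integral surgery slope shifted by an amount that is quadratic in $\lambda$ and linear in the number of twists, and its dual class shifted by the corresponding change of framing, tracked in $(\Z/p\Z)^\times$ up to inversion and sign as normalized after Table~\ref{s3case}. Since $\lambda$ is linear in $\ell$, the shifted slope is quadratic in $\ell$ with an $n$–dependent coefficient, which is precisely the form of the slope column of Table~\ref{lens236n1}; the numerical coincidence $(p+p')/2$ being a perfect square between corresponding entries of the two seed tables is the $n=1$ shadow of this quadratic dependence and serves as a consistency check rather than the mechanism. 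To confirm the identification rigorously I would then run Kirby calculus on the two–component diagram $T\cup K$ (equivalently, the continued–fraction/plumbing calculus for Seifert data): blowing down $T$ and simplifying should exhibit the ambient manifold as the star–shaped plumbing with three legs of multiplicities $2,3,6n\pm1$, proving $Y_{p,k}=\Sigma(2,3,6n\pm1)$, while the same reduction applied after the $p$–surgery on $K$ collapses the diagram to an unknot whose coefficient, read from the linking form, recovers $L(p,k^2)$ and cross–checks the first assertion. The genus value $g'$ would be obtained as a secondary check from the Alexander polynomial of $\tilde K_{p,k}$ together with the sign dictated by Theorem~\ref{rasmussenthm}, the L–space case $\Sigma(2,3,5)$ forcing $g'<0$ and every remaining non–L–space member forcing $g'>0$.

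The hard part will be the uniform bookkeeping across the two parameters $n$ and $\ell$ and across all types $A_1$ through $K$ simultaneously: each type needs its own diagram and its own linking number, and the continued–fraction reduction collapsing the plumbing must be arranged so that the three multiplicities emerge independently of $\ell$ and depend on $n$ only through the single fiber $6n\pm1$. Guaranteeing that the Neumann and Rolfsen moves leave no spurious singular fibers, and then matching the resulting $q$ against $k^2 \bmod p$ after the normalization of the dual class, is where the genuinely delicate computation sits; the isolated entries (type $K$) must be handled separately, since they are not part of a quadratic family and so fall outside the twisting interpolation and have to be verified directly from their diagrams.
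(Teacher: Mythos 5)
You have correctly isolated where the content of Theorem~\ref{sigma236npm1} lies (the identification of the ambient homology sphere), but your construction does not prove the statement as it is actually formulated, and this is a genuine gap rather than a stylistic difference. The theorem asserts that the knots of Table~\ref{lens236n1} are $K_{p,k}$, i.e.\ that the dual knot of the lens space surgery is the \emph{simple $(1,1)$-knot} $\tilde K_{p,k}\subset L(p,k^2)$ --- equivalently, that the homology sphere $Y_{p,k}$ canonically attached to $\tilde K_{p,k}$ is $\Sigma(2,3,6n\pm1)$. Your route (seed knots redrawn in the trefoil exterior, filling slope on $T$ varied, Kirby calculus on $T\cup K$) would at best show that $\Sigma(2,3,6n\pm1)$ contains \emph{some} knot with a positive lens space surgery realizing the parameters $(p,k)$; nothing in it controls the isotopy class of the surgery dual inside $L(p,k^2)$, and tracking its homology class cannot do so (a homology class does not determine a knot, and inferring ``the dual is simple $(1,1)$'' from the surgery parameters alone is exactly a Berge-conjecture-type uniqueness statement, which is open --- the paper itself emphasizes this). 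The paper closes this gap by arguing from the lens space side: the pillowcase method of Section~\ref{pme} is applied to $\tilde K_{p,k}$ itself, its ``straight extension'' (the $a$-sequence of the continued fraction of $p/k^2$ together with the type's $b$-sequence of Table~\ref{continuedpo2}) produces the extended surgery diagrams of Appendix 3, and Corollary~\ref{positivesurgery} settles the orientation/positivity question, which is also not free in a concrete diagram (one must rule out getting $L(p,-k^2)$, i.e.\ the knot sitting in $-\Sigma(2,3,6n\pm1)$). To salvage your approach you would have to prove in addition that varying the filling carries duals of simple $(1,1)$-knots to duals of simple $(1,1)$-knots, or else re-derive the diagrams from the $(1,1)$ side as the paper does.

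There is also a mechanical error in your main step. Replacing the coefficient $\mp1$ on $T$ by $\mp1/n$ is \emph{not} an $(n-1)$-fold Rolfsen twist along a meridian disk of $T$: a meridian disk is punctured once by $T$ and may be taken disjoint from $K$, so twisting along it changes the coefficient of $T$ by an integer ($\mp1\mapsto\mp1-t$), leaves $K$ and its framing untouched, and produces no slope shift quadratic in $\mathrm{lk}(K,T)$. The correct mechanism is to keep the same framed knot $K(m)\subset S^3\setminus T$ and re-fill the trefoil boundary along $\mp1/n$; the order $|nk^2\pm m|$ of $H_1$ of the surgered manifold is then read from the rational linking matrix, which is why $p$ is linear in $n$ with coefficient $k^2$ (and why only the types with $n$-independent $k$ appear in Table~\ref{lens236n1}). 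Once repaired, this is in substance the paper's own move --- substituting the rational multiplicity $6\pm1/n$ for the leg of multiplicity $5$ in the extended plumbing diagrams and computing the resulting continued fractions --- so the repair does not give you an alternative proof; it returns you to the paper's argument minus the $(1,1)$-identification described above.
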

In the same way we state the following conjecture:
\begin{conj}
The genus $g(K_{p,k})$ is $(g'+p+1)/2$, where $g'$ is the value in {\sc Table}~\ref{lens236n1}.
\end{conj}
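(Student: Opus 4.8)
The statement is equivalent, via the defining relation $g'=2g(K_{p,k})-p-1$, to the assertion that the Seifert genus of $K_{p,k}$ equals $\tfrac12(g'+p+1)$ with $g'$ the value recorded in {\sc Table}~\ref{lens236n1}; so the real content is an exact computation of $g(K_{p,k})$. The plan is to work in knot Floer homology and invoke genus detection: since $K_{p,k}$ is null-homologous in the homology sphere $\Sigma(2,3,6n\pm1)$, its Alexander grading is $\mathbb{Z}$-valued and, by the Ozsv\'ath--Szab\'o/Ni genus-detection theorem,
\[
g(K_{p,k})=\max\{\,i:\widehat{HFK}(\Sigma(2,3,6n\pm1),K_{p,k},i)\neq 0\,\}.
\]
Thus it suffices to locate the top nonzero Alexander grading of $\widehat{HFK}$.

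To compute that group I would exploit the surgery duality built into the construction. The exterior of $K_{p,k}$ coincides with the exterior of the simple $(1,1)$-knot $\tilde K_{p,k}\subset L(p,k^2)$, the two being dual cores of the same Dehn filling. Simple knots are Floer-simple, and $\tilde K_{p,k}$ carries an explicit genus-$1$ doubly-pointed Heegaard diagram, so the filtered knot Floer complex of the common exterior is computable by a lattice-point/staircase count in that diagram. One then recovers $\widehat{HFK}(\Sigma(2,3,6n\pm1),K_{p,k},\ast)$ by reading off the Alexander grading associated with the $\Sigma$-side filling rather than the $L$-side one. An equivalent and perhaps more tractable route goes through correction terms: the $d$-invariants of $L(p,k^2)$ are computable by the Ozsv\'ath--Szab\'o recursion, those of $\Sigma(2,3,6n\pm1)$ are known for these Brieskorn spheres, and the integer-surgery mapping-cone formula for $\Sigma(2,3,6n\pm1)_p(K_{p,k})=L(p,k^2)$ expresses the former in terms of the latter together with the knot's nonnegative sequences $V_i,H_i$; the support of these sequences pins down the top Alexander grading.

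With the computational engine in place, I would run it family by family. For each parametrized type the relevant count comes out as a closed expression in $\ell$ (and in the residue class of $n$ determining the $6n\pm1$ branch), and one checks that it equals $\tfrac12(g'+p+1)$. Because $p$ is quadratic and $k$ linear in $\ell$, each verification reduces to a finite collection of polynomial identities supplemented by a monotonicity argument that sweeps out all $\ell$, and the $6n+1$ and $6n-1$ series can be handled in parallel.

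The main obstacle is the lower bound, that is, showing the generator in the conjectured top grading actually survives to homology rather than being cancelled. Over the non-L-space base $\Sigma(2,3,6n\pm1)$ --- which is forced by $2g(K_{p,k})-1>p$ through Theorem~\ref{rasmussenthm} --- the knot Floer homology is \emph{not} determined by the Alexander polynomial alone, and in the surgery formula the contribution of $HF_{\mathrm{red}}(\Sigma(2,3,6n\pm1))$ must be disentangled from the genuine knot data before the $V_i$ can be extracted. Controlling these cancellations in the combinatorial complex, and doing so uniformly across the infinite parameter $\ell$, is where I expect the essential difficulty to lie.
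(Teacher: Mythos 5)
The statement you were asked to prove is not actually proved anywhere in the paper: it appears there as a conjecture, stated immediately after Theorem~\ref{sigma236npm1}, with no argument offered. So the relevant question is whether your proposal would close that conjecture, and it would not. What you have written is a research program rather than a proof. Every step carrying real mathematical content is deferred: you never produce a closed-form top Alexander grading for even one family in {\sc Table}~\ref{lens236n1}, never carry out the lattice-point/staircase count from the $(1,1)$-diagram of $\tilde K_{p,k}$, and never verify a single one of the ``finite collection of polynomial identities'' to which your plan is supposed to reduce. The sentence ``one checks that it equals $\tfrac12(g'+p+1)$'' is exactly the conjecture restated, not a derivation of it.

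Moreover, the obstacle you name in your final paragraph is not a peripheral technicality but the precise reason the statement is open. For the non-L-space members of the family $\Sigma(2,3,6n\pm1)$, Theorem~\ref{rasmussenthm} forces $p\le 2g(K_{p,k})-1$, so the surgery is never a large surgery; consequently the mapping-cone formula does not hand you the sequences $V_i,H_i$ directly, since their extraction is entangled with $HF_{\mathrm{red}}$ of the ambient Brieskorn sphere. Dually, passing from the computable genus-one complex of the simple knot $\tilde K_{p,k}\subset L(p,k^2)$ to the Alexander filtration of $K_{p,k}$ taken with respect to the $\Sigma$-side meridian requires bordered-type (or immersed-curve) data for the common exterior, not merely $\widehat{HFK}$ of the simple knot; and the genus-detection theorem you invoke requires the complement to be irreducible, a hypothesis you should at least address. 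None of this means your strategy is misguided --- it is essentially the framework of the paper cited as \cite{R}, and it is the natural line of attack --- but as submitted it contains no proof of the genus formula, and it stalls at exactly the point where the paper's author also stops and records the statement as a conjecture.
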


\begin{table}[htbp]
\begin{center}
\begin{tabular}{|l|l|l||l||}\hline
Type&  $p$& $k$                             &  $g'$         \\\hline
$\A_2$ &  $nk^2\pm (5\ell^2+5\ell+1)$&$5\ell+2$    & $|k(n-1)\pm\ell|$      \\ \hline
B   &  $nk^2\pm (6\ell^2+3\ell)$&$6\ell+1$    & $|k(n-1)\pm\ell|$      \\ \hline
$\C_1$ &  $nk^2\pm(7\ell^2+5\ell+1)$&$7\ell+2$    & $|k(n-1)\pm\ell|$     \\ \hline
$\C_2$ &  $nk^2\pm(7\ell^2+9\ell+3)$&$7\ell+4$    & $|k(n-1)\pm\ell|$     \\ \hline
$\H_1$ &  $nk^2\pm(22\ell^2+9\ell+1)$&$11\ell+2$   & $|k(n-1)\pm2\ell|$   \\ \hline
$\H_2$ &  $nk^2\pm(22\ell^2+13\ell+2)$&$11\ell+3$  & $|k(n-1)\pm2\ell|$    \\ \hline
$\I_1$ &  $nk^2\pm(24\ell^2+8\ell)$&$12\ell+1$  & $|k(n-1)\pm2\ell|$    \\ \hline
$\I_3$ &  $nk^2\pm(24\ell^2+12\ell+1)$&$12\ell+2$  & $|k(n-1)\pm2\ell|$    \\ \hline
J &  $nk^2\pm(24\ell^2+16\ell+3)$&$12\ell+5$  & $|k(n-1)\pm(2\ell+1)|$    \\ \hline
K   &  $nk^2\pm 34$&$15$ & $|k(n-1)\pm2|$  \\\hline
\end{tabular}
\caption{$K_{p,k}$ in $\Sigma(2,3,6n\pm1)$ yielding $L(p,k^2)$.}
\label{lens236n1}
\end{center}
\end{table} %

\begin{thm}
\label{2s+1theorem}
{\sc Table}~\ref{lens22s+11} is a collection of $K_{p,k}$ in $\Sigma(2,2s+1,2(2s+1)\pm1)$.
Here $\ell$ is a non-zero integer.
\end{thm}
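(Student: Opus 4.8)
The plan is to produce every family in {\sc Table}~\ref{lens22s+11} by a twisting construction along a torus knot, using the observation that $\Sigma(2,2s+1,2(2s+1)n\pm1)$ is the result of $\mp1/n$-surgery on the torus knot $T=T(2,2s+1)\subset S^3$. Writing $c\subset\Sigma(2,2s+1,2(2s+1)n\pm1)$ for the core of the reglued solid torus, its complement is the torus-knot complement $S^3\setminus T$, so a double-primitive knot $K_{p,k}$ in $\Sigma(2,2s+1,2(2s+1)n\pm1)$ may be sought as a knot $K\subset S^3\setminus T$ that remains double-primitive after the $\mp1/n$ twist. The case $s=1$ is exactly Theorem~\ref{sigma236npm1}, where $T$ is the trefoil and the families of {\sc Table}~\ref{lens236n1} arise; I would treat general $s$ by the same mechanism with $T(2,3)$ replaced by $T(2,2s+1)$.

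First I would fix, for each type, the link $K\cup T\subset S^3$. Specializing further to $n=1$ makes the ambient manifold $\Sigma(2,3,5)$ or $\Sigma(2,3,7)$ when $s=1$, so the underlying knots are those already tabulated in {\sc Table}~\ref{po} and {\sc Table}~\ref{lens237}; the task is to display $T(2,2s+1)$ as an axis disjoint from $K$, lying on the genus-two Heegaard surface that carries the double-primitive structure, with winding number $k$ about the axis. I would record this by the concrete knot diagrams announced in the introduction, drawn so that passing from $T(2,3)$ to $T(2,2s+1)$ amounts to inserting $2(s-1)$ further half-twists into the torus braid while leaving $K$ and its double-primitive curve unchanged.

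Next I would perform the $\mp1/n$ twist along $T$ and follow the parameters homologically. Because $K$ meets a Seifert surface of $T$ algebraically $k$ times, the surface framing of the double-primitive curve is shifted by $nk^2$, so the lens-space slope becomes $p=nk^2\pm C(\ell,s)$, with $C(\ell,s)$ read off from the $n=1$ entries, while the dual class stays $k$ because the twist is supported away from the band defining the double-primitive curve. By the first Fact of Section~\ref{intro} applied to the twisted curve, the surface-slope surgery yields a genus-one manifold, hence a lens space; computing its order and the image of the dual curve identifies it as $L(p,k^2)$. The genus entry follows from the behavior of the Seifert genus under the $nk^2$-fold twisting together with the defining relation $g'=2g(K)-p-1$.

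The main obstacle is to make all of this uniform in $s$ and $n$. Three points need care: that the twisted curve is primitive in \emph{both} handlebodies of the genus-two splitting, so that the surgery is a genuine lens space rather than a proper Seifert space; that the winding number of $K$ about $T$ is exactly $k$ for every type, which is what forces the clean slope shift $nk^2$; and that the resulting small Seifert manifold is precisely $\Sigma(2,2s+1,2(2s+1)n\pm1)$ and not another Brieskorn sphere. I expect the last point to be settled most cleanly by identifying $Y_{p,k}$ from its genus-two Heegaard description through Kirby calculus with the $\mp1/n$-surgery presentation of $T(2,2s+1)$, thereby reducing the identification to the classical surgery description of Brieskorn spheres; the two occurrences of $\pm$ are then pinned down by the sign of the twist, and the remaining equalities reduce to the routine quadratic identities recorded in the table.
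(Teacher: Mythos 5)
Your strategy has two genuine gaps, and the first is not merely a deferred verification but a mechanism that cannot be correct uniformly across the table. You claim that for \emph{every} type the knot can be arranged to have winding number exactly $k$ about $T=T(2,2s+1)$, so that the $\mp1/n$ twist shifts the slope by $nk^2$ while the dual class and the double-primitive structure are untouched. If that held, every row of {\sc Table}~\ref{lens22s+11} would sit inside an infinite family of lens space surgeries in $\Sigma(2,2s+1,2(2s+1)n\pm1)$ with slopes $nk^2+\mathrm{const}$, obtained by twisting further. But only the four rows $\B_1,\B_2,\I_1,\J$ --- exactly those whose slopes have the form $k^2\pm(\cdots)$ --- admit such an extension in the paper (Theorem~\ref{22s+1ntheorem}, {\sc Table}~\ref{lens22s+1n1}). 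The other six rows ($\A_2,\D_1,\D_2,\G_1,\G_2,\I_2$, the ones with slopes $\frac14\bigl(k^2\mp(\cdots)\bigr)$) are, in the paper's constructions (Sections~\ref{Ataipu}--\ref{ktypelensspaceknotsection}, {\sc Table}~\ref{s}), \emph{anchored} at the singular fiber of order $2(2s+1)\pm1$: that fiber is forced to carry a Seifert invariant of the special form $(\,\cdot\,,1)$, and this is arithmetically impossible along your twisting family. For instance, for the $\A_2$ row the homology sphere condition for data $S(1,(2,1),(c_1,1),(2s+1,c_2))$ reads $(2s+1)(c_1-2)-2c_1c_2=\pm1$, which has no solution with $c_1=2(2s+1)n\pm1$ once $n\ge2$; the other anchored rows behave analogously. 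Since the core of your reglued solid torus is precisely this fiber, these six families cannot arise by twisting along $T$ with linking number $k$ in the way you assert, so the central slope-shift argument fails for more than half of the table you are trying to prove.

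Second, even where the twisting picture is viable, the proposal is circular about the formulas and never identifies the knots as $K_{p,k}$. The theorem asserts the knots are $K_{p,k}$, i.e.\ duals of the \emph{simple} $(1,1)$-knots $\tilde K_{p,k}$, not merely double-primitive knots with surgery parameter $(p,k)$; the dual of a double-primitive knot is a $(1,1)$-knot but need not be simple, and nothing in your argument matches the dual with $\tilde K_{p,k}$. The paper avoids this issue by building in the opposite direction: it starts from $\tilde K_{p,k}$, runs the pillowcase method (Section~\ref{pme}, Appendices 1--2) to place it in a surgery description with free parameters, specializes those parameters (e.g.\ $a=2(2s+1)\pm1$, $b=(2s+1)/s$ in Proposition~\ref{we}), computes $p/q$ for general $s$ by Kirby calculus and continued fractions, and pins down the dual class via the $b$-sequence together with Corollary~\ref{positivesurgery}. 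In your scheme, the $s$-dependence of every entry (e.g.\ $k=(4s+2)\ell+2s-1$ in row $\B_2$) must be produced by the half-twist insertion that you stipulate ``leaves $K$ and its double-primitive curve unchanged,'' yet you propose to read the constants $C(\ell,s)$ ``off the $n=1$ entries''---which, for $s>1$, are exactly the statements to be proven; your only verified inputs are the $s=1$ tables ({\sc Table}~\ref{po} and {\sc Table}~\ref{lens237}). Without an explicit computation replacing the paper's continued-fraction and $b$-sequence bookkeeping, and without proofs of the three points you yourself flag (primitivity in both handlebodies after the twist, the winding numbers, and the identification of the ambient Brieskorn sphere), the proposal does not establish the theorem.
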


\begin{table}[htbp]
\begin{center}
\begin{tabular}{|l|l|l||}\hline
\text{Type}&  $p$ & $k$                              \\\hline
$\A_2$&$\frac{1}{4}(k^2\mp(k-2)\ell)$ & $(4(s+1)\pm1)\ell+2$\\\hline
$\B_1$&$k^2\pm(k+2)\ell$&$(4s+2)\ell+1$\\\hline
$\B_2$&$k^2\pm((k+2s)\ell+s-1)$&$(4s+2)\ell+2s-1$\\\hline
$\D_1$&$\frac{1}{4}(k^2\mp(3k+2)\ell)$&$(4(3s+1)\pm3)\ell+2$\\\hline
$\D_2$&$\frac{1}{4}(k^2\mp(3l+2)(k-2))$&$(4(3s+1)\pm3)\ell+2(4s+1)\pm2$\\\hline
$\G_1$&$\frac{1}{4}(k^2\mp(3k+2)\ell)$&$(4(3s+2)\pm3)\ell+2$\\\hline
$\G_2$&$\frac{1}{4}(k^2\mp(3l+1)(k-2))$&$(4(3s+2)\pm3)\ell+2(2s+1)\pm1$\\\hline
$\I_1$&$k^2\pm(2k\ell+(k-1)/2)$&$4(2s+1)\ell+2s-1$\\\hline
$\I_2$&$\frac{1}{4}(k^2\mp4(k-2)\ell)$&$4(2(2s+1)\pm1)\ell+2$\\\hline
J&$k^2\pm (2k\ell+(k+1)/2)$&$4(2s+1)\ell+2s+3$\\\hline
\end{tabular}
\caption{$K_{p,k}$ in $\Sigma(2,2s+1,2(2s+1)\pm1)$ yielding $L(p,k^2)$.}
\label{lens22s+11}
\end{center} 
\end{table} %

\begin{thm}
\label{22s+1ntheorem}
{\sc Table}~\ref{lens22s+1n1} is a collection of $K_{p,k}$ in $\Sigma(2,2s+1,2(2s+1)n\pm1)$.
Here $\ell$ is a non-zero integer.
\end{thm}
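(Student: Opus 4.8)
The plan is to reduce the theorem to the determination of the ambient homology sphere $Y_{p,k}$. For each pair $(p,k)$ appearing in Table~\ref{lens22s+1n1}, the knot $K_{p,k}$ and the assertion that its positive $p$-surgery returns $L(p,k^2)$ are already built into the construction of the dual of a simple $(1,1)$-knot from $(p,k)$ in Section~\ref{simple11knot}; by that construction $K_{p,k}$ is a double-primitive knot, so it is automatically a lens space knot with the stated parameter. Hence the entire content is the identity $Y_{p,k}=\Sigma(2,2s+1,2(2s+1)n\pm1)$. Since $n$ enters only through the last Seifert index, and $\Sigma(2,2s+1,2(2s+1)(n{+}1)\pm1)$ differs from $\Sigma(2,2s+1,2(2s+1)n\pm1)$ by exactly $2(2s+1)=2\cdot(2s+1)$ in that index, I would run an induction on $n$ whose base case $n=1$ is precisely Theorem~\ref{2s+1theorem}.

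The inductive step rests on a twisting construction adapted to the Seifert fibration. The family $\Sigma(2,2s+1,2(2s+1)n\pm1)$ is linearly ordered by twisting: a single $\mp1$-twist along a regular fiber isotopic to the torus knot $T(2,2s+1)$ carries the $n$-th member to the $(n{+}1)$-th, raising the last index by $2(2s+1)$. First I would put the genus-two Heegaard surface $\Sigma_2$ carrying $K_{p,k}$ in Seifert position so that the twisting fiber $c$ is disjoint from $\Sigma_2$ and the meridian systems of $H_0,H_1$ are left invariant by the twist along $\partial N(c)$. Such a twist is a homeomorphism from $\Sigma(2,2s+1,2(2s+1)n\pm1)$ to $\Sigma(2,2s+1,2(2s+1)(n{+}1)\pm1)$ sending $\Sigma_2$ to a genus-two Heegaard surface of the new sphere; being supported away from the handlebodies it preserves the two induced elements of $\pi_1(H_i)\cong F_2$, and therefore carries the double-primitive knot $K_{p,k}$ to a double-primitive knot in the new sphere.

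Next I would track the two numerical invariants across one twist. The dual class $k$, being the geometric multiplicity of the dual knot $\tilde K$ along the Heegaard torus of the lens space, is untouched by a twist supported in a fiber neighbourhood, so it persists. The surface slope $p$, on the other hand, increases by $(\mathrm{lk}(K_{p,k},c))^2$, the linking number being well defined since we are in a homology sphere. I would show this linking number equals $k$, so that each twist realizes $p\mapsto p+k^2$; this reproduces exactly the $nk^2$-dependence of the slopes in Table~\ref{lens22s+1n1}, which in the specialization $s=1$ already matches Tables~\ref{po}, \ref{lens237} and \ref{lens236n1} term by term. Folding in the bookkeeping, for each type I would substitute the table's $(p(n),k)$, subtract $(n-1)k^2$, and check that the residue agrees with the $n=1$ entry of Theorem~\ref{2s+1theorem}, while verifying $\gcd(p,k)=1$ and $0<k<p/2$ so that $(p,k)$ is a legitimate lens surgery parameter; the genus column then follows from $g'=2g(K)-p-1$, whose positivity for $n\ge2$ is consistent with Theorem~\ref{rasmussenthm}, as these Brieskorn spheres are non-L-spaces except for the Poincar\'e sphere.

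The main obstacle I anticipate is the rigorous control of the two primitive words under the twist: it is not enough that $c$ be a fiber, one must isotope $c$ off $\Sigma_2$ while checking that both induced elements of $F_2$ stay primitive, and simultaneously pin the linking number $\mathrm{lk}(K_{p,k},c)$ to $k$ so that the slope increment is precisely $k^2$ and not a different square. A secondary, purely computational, difficulty is that several rows carry a factor $\tfrac14$ and different residues $\pm1$, so the uniform substitution must be carried out case by case with attention to the parity of $s$ and the sign choice, paralleling the verification already needed for Table~\ref{lens22s+11}.
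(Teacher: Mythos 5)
Your reduction of the theorem to the identification $Y_{p,k}=\Sigma(2,2s+1,2(2s+1)n\pm1)$ is fine, and your numerical bookkeeping (slopes of the form $nk^2\pm(\cdots)$, base case $n=1$ agreeing with Theorem~\ref{2s+1theorem}) matches the table. But the geometric engine of your inductive step is false: a $\mp1$-twist along a \emph{regular} fiber $c$ of $\Sigma(2,2s+1,2(2s+1)n\pm1)$ does not produce $\Sigma(2,2s+1,2(2s+1)(n+1)\pm1)$, under either natural choice of framing. If the twist is taken with respect to the fiber framing (the framing in which a push-off is another fiber), it changes the Euler number of the Seifert fibration while keeping all three multiplicities, so the result has first homology of order $|2(2s+1)(2(2s+1)n\pm1)\mp1|>1$ and is not a homology sphere at all. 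If instead you use the canonical $0$-framing of $c$ in the homology sphere, then since the fiber slope sits at distance $2(2s+1)(2(2s+1)n\pm1)$ from the canonical longitude, a $\mp1$-surgery creates a \emph{fourth} exceptional fiber of multiplicity $2(2s+1)(2(2s+1)n\pm1)\mp1$ rather than raising the third multiplicity by $2(2s+1)$. The curve that actually carries the $n$-th member to the $(n+1)$-st is the \emph{exceptional} fiber of multiplicity $2(2s+1)n\pm1$, i.e.\ the core of the filling torus in the description $\Sigma(2,2s+1,2(2s+1)n\pm1)=S^3_{\mp1/n}(T(2,2s+1))$, taken with its canonical framing; it is not a regular fiber, and it changes at every stage, so the induction is not along a fixed curve and the whole step must be rebuilt around it.

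Even after correcting the twisting circle, the two statements you yourself flag as "the main obstacle" --- that $c$ can be isotoped off the genus-two Heegaard surface so that both primitive words survive, and that $\mathrm{lk}(K_{p,k},c)=\pm k$ so the slope moves by exactly $k^2$ --- are precisely the content of the theorem, and no argument is offered for them. They are also genuinely type-dependent: only four of the ten families of {\sc Table}~\ref{lens22s+11} reappear in {\sc Table}~\ref{lens22s+1n1}, whereas a uniform twisting argument of the kind you sketch would "extend" every row, asserting families the paper does not claim. The paper avoids all of this: its proof of Theorem~\ref{22s+1ntheorem} simply invokes the B, I and J type propositions of Sections~\ref{Btaipu} and~\ref{IJtaipu}, where $K_{p,k}$ is exhibited in an explicit surgery diagram ({\sc Figure}~\ref{DualB}, \ref{extB}, \ref{DualI}, \ref{extI}, \ref{DualJ}, \ref{extJ}) in which the chain of unknots realizing the multiplicity $(2(2s+1)n\pm1)/n$ is visibly disjoint from $K$; the resulting lens space is then computed by Kirby calculus and continued fractions uniformly in $s,n,\ell$, the dual class is read off from the $b$-sequence, and positivity of the slope follows from Corollary~\ref{positivesurgery}. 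So the $n$-dependence is handled by direct computation rather than induction, and the disjointness/linking issues you defer never arise. As written, your proposal rests on a false key step and postpones exactly the claims that would constitute a proof.
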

\begin{table}[htbp]
\begin{center}
\begin{tabular}{|l|l|l||}\hline
Type&  $p$ & $k$                              \\\hline
$\B_1$&$nk^2\pm(k+2)\ell$&$(4s+2)\ell+1$\\\hline
$\B_2$&$nk^2\pm((k+2s)\ell+s-1)$&$(4s+2)\ell+2s-1$\\\hline
$\I_1$&$nk^2\pm(2k\ell+(k-1)/2)$&$4(2s+1)\ell+2s-1$\\\hline
J&$nk^2\pm (2k\ell+(k+1)/2)$&$4(2s+1)\ell+2s+3$\\\hline
\end{tabular}
\caption{$K_{p,k}$ in $\Sigma(2,2s+1,2(2s+1)n\pm1)$ yielding $L(p,k^2)$.}
\label{lens22s+1n1}
\end{center}
\end{table} %
To obtain these tables, we will look back $K_{p,k}$'s in {\sc Table}~\ref{po} and give the knot diagrams of them (Appendix 1, 2).
By using these diagrams, we give straight extensions of $K_
{p,k}$ to Brieskorn homology spheres, $\Sigma(2,3,6n\pm1)$, $\Sigma(2,2s+1,2(2s+1)\pm1)$ and $\Sigma(2,2s+1,2(2s+1)n\pm1)$.
%These examples correspond to Theorem~\ref{sigma236npm1}, \ref{2s+1theorem} and \ref{22s+1ntheorem} respectively.
For lens surgeries of other types of Brieskorn homology spheres, see Section \ref{Ataipu} to \ref{ktypelensspaceknotsection}.
For examples, Brieskorn homology spheres
$$\Sigma(m,ms+m-1,m(ms+m-1)n\pm1),$$
for positive integers $s,n,m$ admit similar quadratic families of lens space surgeries.
%This is proved in Section~\ref{Ataipu} to \ref{ktypelensspaceknotsection}.

\begin{thm}
Suppose on data of {\sc Table}~\ref{s}  we impose the restriction that $r_1,s_1,t_1$ are pairwise relatively prime in addition to the condition indicated in the table.
The Seifert data in {\sc Table}~\ref{s} present Brieskorn homology spheres $\Sigma(|r_1|,|s_1|,|t_1|)$ (up to orientation) containing $K_{p,k}$.

\end{thm}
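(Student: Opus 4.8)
The plan is to read a Seifert fibered surgery presentation for the ambient manifold $Y_{p,k}$ directly off the data of {\sc Table}~\ref{s}, compute its first homology to confirm it is an integral homology sphere, and then invoke the uniqueness of small Seifert fibered homology spheres to identify it, up to orientation, with $\Sigma(|r_1|,|s_1|,|t_1|)$.

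First I would convert each row of {\sc Table}~\ref{s} into standard Seifert invariants. The table entries arise from the explicit framed-link diagrams of the knots $K_{p,k}$ developed earlier in the paper; applying Rolfsen twists and slam-dunks to the link underlying a given row collapses it to a star-shaped plumbing, and hence to normalized Seifert invariants $\{b;(\alpha_1,\beta_1),(\alpha_2,\beta_2),(\alpha_3,\beta_3)\}$ over $S^2$ after absorbing any multiplicity-one fibers into the integer $b$. I would record that the three exceptional multiplicities are exactly $\alpha_1=|r_1|$, $\alpha_2=|s_1|$, $\alpha_3=|t_1|$, while keeping track of the residues $\beta_i \bmod \alpha_i$ forced by the congruences indicated in the table.

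Second I would verify the homology-sphere condition. For a genus-zero Seifert manifold the order of $H_1$ equals $|\alpha_1\alpha_2\alpha_3\,e|$, where $e=-(b+\sum_i \beta_i/\alpha_i)$ is the rational Euler number. Using the pairwise-coprimality hypothesis on $r_1,s_1,t_1$ together with the conditions in the table, I would check that
\[
\alpha_1\alpha_2\alpha_3\,e
=-\bigl(b\,\alpha_1\alpha_2\alpha_3+\beta_1\alpha_2\alpha_3+\beta_2\alpha_1\alpha_3+\beta_3\alpha_1\alpha_2\bigr)=\pm1,
\]
so that $H_1(Y_{p,k})=0$. The pairwise coprimality is precisely what lets this integer be normalized to $\pm1$: without it the presented manifold would carry nontrivial homology, which is why the condition must be imposed as a hypothesis rather than derived.

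Finally I would identify the manifold. A Seifert fibered homology sphere over $S^2$ with three exceptional fibers of pairwise coprime multiplicities $m_1,m_2,m_3$ is, up to orientation, uniquely the Brieskorn sphere $\Sigma(m_1,m_2,m_3)$; this is the standard classification of small Seifert homology spheres, under which the residues $\beta_i \bmod \alpha_i$ are forced by the multiplicities and the homology-sphere condition. Applying it with $(m_1,m_2,m_3)=(|r_1|,|s_1|,|t_1|)$ gives $Y_{p,k}\cong\pm\Sigma(|r_1|,|s_1|,|t_1|)$, and the containment of $K_{p,k}$ is automatic since the presentation was produced from the diagram of $K_{p,k}$, whose dual core survives as the prescribed knot. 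The main obstacle I expect is the bookkeeping in the second step: tracking the $\beta_i$ and the sign of $e$ through the Kirby moves so that the homology computation lands on $\pm1$ \emph{uniformly} across every row of the table, and then pinning down the orientation in the final identification.
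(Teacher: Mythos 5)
Your steps 1--3 dispose only of the easy half of the statement: that Seifert data with three pairwise coprime multiplicities satisfying the homology-sphere condition present $\pm\Sigma(|r_1|,|s_1|,|t_1|)$ is the standard uniqueness fact for Seifert fibered homology spheres, and the paper treats it as such. The genuine content of the theorem is the clause ``containing $K_{p,k}$'', and this is exactly where your proposal has a gap: you declare the containment ``automatic since the presentation was produced from the diagram of $K_{p,k}$.'' It is not automatic, and it is not even how the statement is structured --- $K_{p,k}$ is by definition the dual knot of the unique integral homology sphere surgery (Corollary~\ref{1deter}) on the simple $(1,1)$-knot $\tilde{K}_{p,k}\subset L(p,k^2)$, so to exhibit $K_{p,k}$ inside the Brieskorn sphere one must produce a concrete knot there, show that a positive integral surgery on it yields a lens space, and show that the dual of that surgery is a simple $(1,1)$-knot with matching parameter $(p,k)$. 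When the multiplicities in {\sc Table}~\ref{s} are moved away from the $\Sigma(2,3,5)$ values, the resulting lens space, the slope $p$, and the dual class $k$ all change, so none of these verifications are inherited from the original case; they must be redone family by family.

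The paper's actual proof is distributed over Section~\ref{Ataipu} through Section~\ref{ktypelensspaceknotsection} together with the appendices, and it runs in the direction opposite to yours: the extended pillowcase descriptions (Appendix 1) and their double branched covers via the Montesinos trick (Appendix 2) produce $\tilde{K}_{p,k}$ in a surgery presentation of a lens space; attaching the $0$-framed meridian and performing Kirby calculus (Appendix 3, {\sc Figure}~\ref{extA} through {\sc Figure}~\ref{extK}) identifies the ambient ${\mathbb Z}$HS with the Seifert data of {\sc Table}~\ref{s} and exhibits the dual knot; and the continued-fraction and $b$-sequence computations (as in {\sc Table}~\ref{continuedpo2}) together with Corollary~\ref{positivesurgery} pin down that the surgery is positive integral with dual class $k$, so that the knot really is $K_{p,k}$. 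Your proposal contains no substitute for this machinery: no mechanism for extracting a knot from the abstract Seifert data, no verification that surgery on it gives a lens space, and no identification of the dual as a simple $(1,1)$-knot with the right parameters. The homology and orientation bookkeeping you flag as the ``main obstacle'' is in fact the routine part; the per-type constructive verification you skipped is the theorem.
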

\begin{table}[htbp]
\begin{center}
\begin{tabular}{|l|l|l|l|}\hline
Type& $S(e,(r_1,r_2),(s_1,s_2),(t_1,t_2))$& Condition        \\\hline
A&$S(1,(2,1),(2s+1,1),\ast)$&$s\in{\mathbb Z}_{>0}$\\\hline
B&$S(1, (2a-1,a),\ast,\ast)$ & $a\in {\mathbb Z}\setminus\{0\}$\\\hline
CD&$S(1,(m,m-1),(\beta,1),\ast)$&$m\in {\mathbb Z}\setminus\{1\},\beta\in {\mathbb Z}_{>0}$\\\hline
E&$S(1,(2m-1,m-1),(\alpha,1),(\beta,1))$&$m\in {\mathbb Z}\setminus\{1\},\alpha,\beta\in {\mathbb Z}_{>0}$\\\hline
FGH&$S(1,(a,1),(b,1),\ast)$&$a,b\in {\mathbb Z}_{>0}$\\\hline
I&$S(1,(c,1),\ast,\ast)$&$c\in {\mathbb Z}_{>0}$\\\hline
J&$S(1,(2,1),\ast,\ast)$&\\\hline
K&$S(1,(3,1),(m,m-1),\ast)$&$m\in {\mathbb Z}\setminus\{1\}$\\\hline
\end{tabular}
\caption{The Seifert data (up to orientation) of Brieskorn homology spheres containing $K_{p,k}$, where $\ast$ means any multiplicity.}
\label{s}
\end{center}
\end{table} %
The data in {\sc Table} \ref{s} do not give all the Brieskorn homology spheres.
For example, $\Sigma(7,11,15)$ has the Seifert data $S(1,(7,2),(11,2),(15,8))$.
Hence, it is not determined whether $\Sigma(7,11,15)$ contains $K_{p,k}$
or generally any lens space knot or not.
On the other hand, we would like to address the following conjecture.
\begin{conj}
Any Brieskorn homology sphere contains $K_{p,k}$.
\end{conj}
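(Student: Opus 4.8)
The plan is to realize the required knot inside $\Sigma(a,b,c)$ as a curve isotopic to one of its exceptional Seifert fibers, mimicking the way a torus knot is a fiber of the fibration on $S^3$ and is the most basic double-primitive knot. Concretely, fix pairwise coprime $a,b,c$ and let $f$ be the exceptional fiber of multiplicity $c$ in the Seifert fibration of $\Sigma(a,b,c)$. First I would record the \emph{vertical} genus-two Heegaard splitting $\Sigma(a,b,c)=H_0\cup_{\Sigma_2}H_1$ adapted to two of the three singular fibers, chosen so that $f$ is a core of a one-handle of $H_0$; pushing $f$ onto $\Sigma_2$ produces a simple closed curve $K$ on the Heegaard surface. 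Because $f$ is a core of $H_0$, the induced class $[K]\in\pi_1(H_0)\cong F_2$ is automatically primitive, so half of the double-primitive condition comes for free.

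The second step is to verify the remaining primitivity $[K]\in\pi_1(H_1)\cong F_2$ and to identify the lens surgery parameters. The exterior $X=\Sigma(a,b,c)\setminus N(f)$ is Seifert fibered over the disk with the two remaining exceptional fibers of orders $a,b$; among the Dehn fillings of $\partial X$ there is a slope $\gamma$ of fiber-intersection number one whose filling is Seifert fibered over $S^2$ with only two exceptional fibers, hence a lens space $L(p,q)$. Since $\Sigma(a,b,c)$ is a $\mathbb{Z}$HS and the filled manifold has $|H_1|=p$, one checks that $\gamma$ is an integral slope on the knot $K$, so after orienting to make it positive $K$ is a positive lens space knot. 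I would then compute $p$, the dual class $k$, and the gluing word $[K]\in\pi_1(H_1)$ directly from the Seifert invariants by a negative continued-fraction expansion; granting primitivity in $H_1$, the basic fact that any double-primitive knot is a lens space knot, together with the correspondence between double-primitive knots and simple $(1,1)$-knots developed in Section~\ref{simple11knot}, identifies $K$ with some $K_{p,k}$ and forces the surgered manifold to be $L(p,k^2)$, exactly as in the tables.

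The main obstacle is precisely the global verification that this works for \emph{every} coprime triple rather than only the structured families already collected in {\sc Table}~\ref{s}. Two linked difficulties arise. First, primitivity of $[K]$ in $H_1$ is not automatic: the fiber word in the second handlebody is governed by the Seifert invariants through continued fractions, and I expect its primitivity to reduce to a quadratic congruence of the type $k^2\equiv\pm1$ or $k^2\pm k\pm1\equiv0\pmod p$ together with a matching of Seifert data, i.e. to a Diophantine condition on $(a,b,c)$. Second, even when $K$ is a lens space knot its dual might fail to be a \emph{simple} $(1,1)$-knot, so $K$ need not be one of the restricted knots $K_{p,k}$; the triple $\Sigma(7,11,15)$, whose Seifert data $S(1,(7,2),(11,2),(15,8))$ lies outside {\sc Table}~\ref{s}, is exactly the sort of example where the continued fractions are long and the naive fiber construction is hard to certify. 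I would therefore attack the conjecture by reformulating it as the surjectivity of the continued-fraction map $(p,k)\mapsto Y_{p,k}$ onto the set of Brieskorn homology spheres, and by establishing that surjectivity one arm-length at a time; the genuinely hard case will be triples forcing all three continued-fraction arms to be long, where no short parametrized family in {\sc Table}~\ref{s} applies and a new number-theoretic input is needed.
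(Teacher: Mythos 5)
The first thing to note is that the statement you were given is not a theorem of the paper and has no proof there: it is posed as an open conjecture, immediately after the author observes that the constructions of the paper (the Seifert data in {\sc Table}~\ref{s}) do not reach all Brieskorn spheres, and that already for $\Sigma(7,11,15)$, with Seifert data $S(1,(7,2),(11,2),(15,8))$, it is not determined whether that manifold contains $K_{p,k}$, or indeed any lens space knot at all. So there is no proof to compare yours against; and your own text concedes, in its final paragraph, that the ``global verification for every coprime triple'' is deferred to a future number-theoretic input. As written, your proposal is a research program, not a proof.

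Beyond the gaps you acknowledge, the central mechanism you propose fails at a concrete step, and it fails precisely on the paper's test case. The assertion that the distance-one slope $\gamma$ ``is an integral slope since $\Sigma(a,b,c)$ is a ${\mathbb Z}$HS and the filled manifold has $|H_1|=p$'' is a non sequitur: an $r/s$-filling of a knot exterior in a homology sphere has $|H_1|=|r|$ for \emph{every} $s$, so the order of $H_1$ sees only the numerator of the slope. Integrality is instead a congruence condition. With respect to the canonical ${\mathbb Z}$HS framing, the fiber slope of the multiplicity-$c$ fiber $f$ is $\pm ab/c$: the regular fiber $h$ meets the meridian disk of $f$ in $c$ points, and $[h]=\pm ab\,[\mu]$ in the first homology of the exterior (this follows from the Seifert presentation together with the homology-sphere condition $e-\alpha/a-\beta/b-\gamma/c=\pm1/(abc)$, or from the fact that filling along $h$ yields a manifold whose first homology has order $ab$). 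Hence an integral slope $n$ at distance one from the fiber slope must satisfy $|ab-cn|=1$, i.e.\ $ab\equiv\pm1\pmod{c}$. For $\Sigma(7,11,15)$ one computes $7\cdot11\equiv2\pmod{15}$, $11\cdot15\equiv4\pmod{7}$, and $7\cdot15\equiv6\pmod{11}$, so \emph{no} exceptional fiber of $\Sigma(7,11,15)$ admits any integral surgery at distance one from its fiber slope: your construction outputs nothing for the one example the paper singles out. This also undercuts the Heegaard-surface half of your plan: a double-primitive knot always admits an integral lens space surgery (the surface slope), so the primitivity in $\pi_1(H_1)$ that you hope to verify must in fact \emph{fail} for every exceptional fiber of $\Sigma(7,11,15)$. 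Finally, even where the congruence holds the construction can be vacuous --- for the multiplicity-$5$ fiber of $\Sigma(2,3,5)$ the fiber slope is $\mp6/5$ and the unique integral distance-one filling merely returns $S^3$ --- and whether the dual knot of such a surgery is a \emph{simple} $(1,1)$-knot, i.e.\ whether your knot is a $K_{p,k}$ at all, is left untouched. The conjecture remains open, and any successful approach will have to produce knots that are in general not Seifert fibers.
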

As an analog of the result in \cite{MT5}, we conjecture the following.
This conjecture can be checked for our examples appeared in this paper.
\begin{conj}
Any Brieskorn homology sphere $-\Sigma(p,q,r)$ with the orientation opposite to the usual one does not 
contain $K_{p,k}$.
\end{conj}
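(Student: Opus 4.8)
\emph{Proof strategy.} The plan is a Heegaard Floer correction-term obstruction, generalizing the argument of \cite{MT5} that excludes positive lens space knots in $-\Sigma(2,3,5)$. To separate the two roles of the letter $p$, let me write a hypothetical knot as $K_{p',k}\subset -\Sigma(p,q,r)$, so the putative surgery is $(-\Sigma(p,q,r))_{p'}(K_{p',k})=L(p',k^2)$ with slope $p'>0$. First I record the one fact that breaks the symmetry between the two orientations: $\Sigma(p,q,r)$ bounds the negative-definite plumbing obtained by resolving the singularity, so the Ozsv\'ath--Szab\'o inequality for negative-definite fillings gives $d(\Sigma(p,q,r))\ge 0$, whence
\[
d(-\Sigma(p,q,r))=-\,d(\Sigma(p,q,r))\le 0 .
\]
Crucially this correction term, as well as the full graded group $HF^+(\Sigma(p,q,r))$, is computed independently of any surgery from the Seifert data by the plumbing algorithm.

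The core of the argument is the integer-surgery mapping cone. If the surgery existed, then writing $V_0\ge V_1\ge\cdots\ge 0$ for the local $h$-invariants of $(K_{p',k},-\Sigma(p,q,r))$ and using $(-\Sigma(p,q,r))_{p'}(U)=-\Sigma(p,q,r)\#L(p',1)$, the surgery formula would identify
\[
d(L(p',k^2),i)=d(-\Sigma(p,q,r))+d(L(p',1),i)-2\,V_{\min(i,\,p'-i)}
\]
in every $\mathrm{spin}^c$ label $i$. The point is that all three correction terms here are known independently: the left-hand side is the intrinsic correction term of the lens space $L(p',k^2)$, while $d(L(p',1),i)$ and $d(-\Sigma(p,q,r))$ are likewise computable. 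The relation therefore over-determines the $h$-invariants and forces
\[
V_{\min(i,\,p'-i)}=\tfrac12\big(d(-\Sigma(p,q,r))+d(L(p',1),i)-d(L(p',k^2),i)\big)\ge 0
\]
for every $i$. Hence, for a contradiction, it suffices to exhibit a single $\mathrm{spin}^c$ label with
\[
d(L(p',k^2),i)-d(L(p',1),i)> d(-\Sigma(p,q,r)),
\]
and since the right-hand side is $\le 0$, any label where $L(p',k^2)$ carries the strictly larger correction term will do; the self-conjugate structure is the natural place to test this. The same inequality read for the canonical orientation must instead beat $d(\Sigma(p,q,r))\ge 0$, which the genuine surgeries of {\sc Table}~\ref{po} comfortably satisfy --- this is exactly where the orientation asymmetry lives.

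For the infinite families and the sporadic parameters appearing in the tables of this paper I would verify the conclusion by explicit computation: the correction terms $d(L(p',q),i)$ are produced by the Ozsv\'ath--Szab\'o recursion and $d(\Sigma(p,q,r))$ by the plumbing, so one can locate, family by family, a $\mathrm{spin}^c$ label violating $V_{\min(i,\,p'-i)}\ge 0$. This is the sense in which the paper notes that the conjecture ``can be checked for our examples,'' the model being the case $\Sigma(2,3,5)$ of \cite{MT5}, where $d=2$ renders the sign obstruction transparent.

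The main obstacle is uniformity over all admissible $(p',k)$ and all Brieskorn triples, rather than a case-by-case check. Two difficulties dominate. First, proving that some $\mathrm{spin}^c$ label satisfies $d(L(p',k^2),i)-d(L(p',1),i)>d(-\Sigma(p,q,r))$ in general is a number-theoretic statement about differences of Dedekind-sum-type expressions, which I expect to be the genuinely hard step. Second, when $d(\Sigma(p,q,r))=0$, as already for $\Sigma(2,3,7)$, the inequality can degenerate to equality, and one must then use the remaining data in the mapping cone --- the monotonicity and unit-step conditions on the $V_j$ together with the one-sided support of $HF^+_{\mathrm{red}}$ forced by the negative-definite plumbing --- rather than the $d$-invariant alone; controlling this finer cancellation for an arbitrary double-primitive knot, and in the non-L-space regime $2g(K_{p',k})-1>p'$ dictated by Theorem~\ref{rasmussenthm}, is what keeps the full statement conjectural.
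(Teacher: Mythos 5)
The first thing to say is that there is no proof in the paper for your argument to be measured against: the statement you were given is stated in the paper as a \emph{conjecture}, offered as an analogue of the result of \cite{MT5}, and the paper records only that (i) the case $(p,q,r)=(2,3,5)$ is a theorem of \cite{MT5}, and (ii) the conjecture ``can be checked for our examples,'' i.e.\ verified family-by-family on the knots constructed in {\sc Table}~\ref{po} and its extensions. Your proposal, to its credit, is honest about this: it is presented as a strategy, and you state explicitly that the uniformity step is missing and that this ``is what keeps the full statement conjectural.'' So the accurate verdict is that you have not proved the statement --- but neither does the paper, and no proof should have been expected.

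As a strategy, your outline is the natural generalization of \cite{MT5}: $d(\Sigma(p,q,r))\ge 0$ because the Brieskorn sphere bounds the negative-definite resolution (Ozsv\'ath--Szab\'o inequality plus Elkies' theorem), hence $d(-\Sigma(p,q,r))\le 0$, and the integer-surgery formula with local $h$-invariants would convert a strict $d$-invariant inequality in a single $\mathrm{spin}^c$ class into a contradiction. The gaps you name are, however, exactly where the content of the conjecture lives, and they are not minor. First, one needs $d(L(p',k^2),i)-d(L(p',1),i)>-d(\Sigma(p,q,r))$ for some $i$, uniformly over all admissible $(p',k)$ and all Brieskorn triples; this is an open number-theoretic problem, not a routine verification. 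Second, when $d(\Sigma(p,q,r))=0$ the $d$-invariant obstruction cannot suffice even in principle: already for $(2,3,7)$ both orientations have vanishing $d$, yet the paper shows $\Sigma(2,3,7)$ contains many knots $K_{p',k}$ ({\sc Table}~\ref{lens237}), so only finer data ($HF^+_{\mathrm{red}}$, the full mapping cone, in the non-L-space regime $2g-1>p'$ of Theorem~\ref{rasmussenthm}) can distinguish the two orientations. A smaller logical point: the non-negativity of your normalized $V_j$ for a knot in an arbitrary homology sphere is an \emph{input} that must be justified from the large-surgery formula (with the standard normalization it does hold); it is not ``forced'' by the displayed relation, so the argument should be stated in that order. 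In sum, your proposal is a reasonable research program consistent with the one known case, but the statement remains open both in your write-up and in the paper itself.
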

In the case of $(p,q,r)=(2,3,5)$, there is no positive lens space knots in $-\Sigma(2,3,5)$,
as proven in \cite{MT5}.

The final result of this paper is to give families of double-primitive knots in some graph homology spheres.
Here we denote the graph manifold with the plumbing diagram as in {\sc Figure}~\ref{gra} by $G(\{A,B\},\{C,D\};a,b)$ for four rationals $A,B,C,D$ and two integers $a,b$.
We refer \cite{Sav} for the plumbing diagram for plumbed 3-manifold.
\begin{figure}[htbp]
\begin{center}
\includegraphics{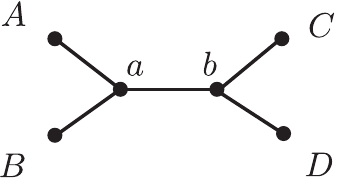}
\caption{The plumbing graph for $G(\{A,B\},\{C,D\};a,b)$}
\label{gra}
\end{center}
\end{figure}

\begin{thm}
\label{graphhomspheres}
Suppose that graph homology spheres of the form above have the following condition:
\begin{enumerate}
\item $G(\{2,3\},\{P/Q,m/(m-1)\};0,0)$ where $m$ is any integer, $P,Q,m$ satisfy $-5P+5mQ-Pm=\pm1$.
\item $G(\{2,5\},\{P/Q,m/(m-1)\};0,0)$ where $m$ is any integer, $P,Q,m$ satisfy $-7P+7mQ-3mP=\pm1$.
\end{enumerate}
Then the homology spheres contain $K_{p,k}$ and produce the lens space $L(p,q)$ with 
$$p/q=[\alpha_n,\cdots, \alpha_1,\ell+1,2,-m,3,-\ell],$$
$$p/q=[\alpha_n,\cdots, \alpha_1,\ell+1,2,-m,5,-\ell],$$
where $[\alpha_1,\alpha_2,\cdots, \alpha_n]=P/Q$ is satisfied.
%\begin{enumerate}
%\item[(1)] $m$ is any integer, $p,q,m$ satisfy $-5p+5mq-pm=\pm1$
%\item[(2)] $m$ is any integer, $p,q,m$ satisfy $-7p+7mq-3mp=\pm1$.
%\end{enumerate}
\end{thm}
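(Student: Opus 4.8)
The plan is to realize each of the two families as an explicit plumbed (Kirby) diagram, to exhibit $K_{p,k}$ inside it as a double-primitive knot, and to reduce the plumbing obtained after the surface-slope surgery to a single weighted chain whose Hirzebruch--Jung continued fraction is the one displayed in the statement. First I would write down the plumbing graph underlying $G(\{2,3\},\{P/Q,m/(m-1)\};0,0)$ following {\sc Figure}~\ref{gra}: two central vertices carrying the framings $a=b=0$, joined by an edge, with the arm realizing the pair $\{2,3\}$ (resp.\ $\{2,5\}$) on one hub and the arms realizing $\{P/Q,\,m/(m-1)\}$ on the other. Expanding $P/Q=[\alpha_1,\dots,\alpha_n]$ turns the $P/Q$-arm into a chain of weighted vertices, so the whole object becomes a linear-plumbing surgery description obtained by splicing the two Seifert pieces along a regular fiber, exactly as in the description of graph homology spheres used throughout the paper.

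Next I would pin down the homology-sphere condition, which is where the integers $5$ and $7$ enter. Here one computes the determinant of the intersection matrix of the plumbing and checks that it equals $\pm1$ precisely when $-5P+5mQ-Pm=\pm1$ in case~(1) and $-7P+7mQ-3mP=\pm1$ in case~(2). The coefficients $5$ and $7$ record the self-linking data of the two hubs carrying $\{2,3\}$ and $\{2,5\}$ together with the $m/(m-1)$ fiber; imposing $\det=\pm1$ both guarantees that the plumbed manifold is an integral homology sphere and fixes the arithmetic relation that lets the splice close up.

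I would then exhibit the double-primitive knot. Using the explicit appendix diagrams of the relevant $K_{p,k}$ (the ones extended from the $\Sigma(2,3,5)$ family), I place $K_{p,k}$ on the genus-two Heegaard surface compatible with the splice decomposition and verify that its image is primitive in each of the two genus-two handlebodies. By the basic fact that any double-primitive knot is a lens space knot, the surface-slope surgery yields a filling of Heegaard genus one; since every remaining piece of the diagram is a linear plumbing, this genus-one manifold is a lens space. To read off $L(p,q)$ I realize the surface slope by adding the surgery datum parametrized by $\ell$ to the hub and reduce the resulting plumbing to one chain by Neumann's plumbing calculus (Rolfsen twists, blow-downs, slam-dunks). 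The two framings $0,0$, the parameter $\ell$, and the $m/(m-1)$ and $\{2,3\}$ (resp.\ $\{2,5\}$) data collapse into the tail $[\dots,\ell+1,2,-m,3,-\ell]$ (resp.\ $[\dots,2,-m,5,-\ell]$), while the $P/Q$-arm survives as the initial segment $[\alpha_n,\dots,\alpha_1]$ written in reversed order; reading the whole chain as a continued fraction produces $p/q$.

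I expect the main obstacle to be the third step rather than the arithmetic: one must check that $K_{p,k}$ is genuinely double-primitive in the \emph{spliced} manifold, i.e.\ that its class is primitive in both handlebodies of the spliced genus-two splitting and not merely inside each Seifert summand, and simultaneously track the signs and the parameter $\ell$ through the plumbing reduction so that the resulting slope is exactly $p$ and the dual class exactly $k$ with $q\equiv k^{2}$. Once the diagram and the genus-two surface are fixed, the continued-fraction collapse is a routine computation; the delicate point is matching that collapse to the double-primitive surface slope that certifies the lens space in the first place.
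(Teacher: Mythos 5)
Your overall skeleton (realize $G(\{2,3\},\{P/Q,m/(m-1)\};0,0)$ as a plumbing, check the homology-sphere condition by a determinant computation, then collapse the surgered plumbing to a single weighted chain to read off $p/q$) is the same as the paper's: the paper's proof is a one-liner saying that the CDE type graph deformation of {\sc Figure}~\ref{graphdeformation}, already established in Section~\ref{CDEtaipu} and Appendix 3 ({\sc Figure}~\ref{DualCD}, \ref{extE}), applies verbatim when the arms are general chains, so that ``we have only to check that the graph manifolds for the diagrams are homology spheres'' --- i.e.\ exactly your step two. The genuine gap is in your third step, and it is the very step you flag as the main obstacle: you propose to establish double-primitivity by placing the knot on a genus-two Heegaard surface of the spliced manifold and verifying by hand that its class is primitive in $\pi_1$ of each handlebody. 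You offer no method for doing this, and even if it were completed it would prove too little: double-primitivity only makes the knot a lens space knot, whereas the theorem asserts that the knot is $K_{p,k}$, i.e.\ the surgery dual of the \emph{simple} $(1,1)$-knot $\tilde{K}_{p,k}$ with the specific parameter $(p,k)$.

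The point your proposal misses is that no primitivity verification is needed at all, because the logic of the paper runs in the opposite direction. The knot in the plumbing diagram is \emph{constructed} as the dual of $\tilde{K}_{p,k}$: the pillowcase method produces the simple $(1,1)$-knot, the double branched cover places $\tilde{K}_{p,k}$ in a surgery diagram, and the $0$-framed meridian trick (Section~\ref{0framedmeridian}, with the framing for the ${\mathbb Z}$HS surgery pinned down by Corollary~\ref{1deter}) produces the knot in the plumbed manifold. Consequently, once (i) the arithmetic condition guarantees the ambient graph manifold is a homology sphere and (ii) the Kirby calculus of {\sc Figure}~\ref{graphdeformation} turns the surgered diagram into the stated linear chain, the knot is $K_{p,k}$ by construction, and double-primitivity comes for free from the paper's second basic fact (the dual of a simple $(1,1)$-knot in a lens space is double-primitive in the homology sphere). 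What actually must be tracked --- and what the paper tracks in the analogous Brieskorn propositions --- is the dual class: one computes the sequence $(p_i)$ and the $b$-sequence of the continued fraction (as in {\sc Table}~\ref{continuedpo2}) to confirm the dual class is $k$, and invokes Corollary~\ref{positivesurgery} to see that the slope is the positive integer $p$ with $q\equiv k^2 \bmod p$. Replacing your handlebody-primitivity check by this $b$-sequence computation repairs the argument; as written, your proposal leaves its central step unproved.
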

There is a natural question here: 
Are there any hyperbolic homology sphere $Y_{p,k}$?
In this article, we deal with non-hyperbolic $Y_{p,k}$ only.
In the forthcoming article we will give examples of hyperbolic $Y_{p,k}$.

\subsection{Application to the $b_2=1$ 4-manifold with lens space boundary.}%%%%
%%%%%%%%
\label{b2=1examplesection}
The computation of this article is definitely useful to construct 
simply connected 4-manifolds with small $b_2$ whose boundaries are lens spaces.
`Small' means $b_2=1$ here.
If a lens space is constructed by integral Dehn surgery of $S^3$,
then the lens space is the boundary of a 4-manifold with $b_2=1$.
Our several lists in this article may give interesting 
small $b_2$ bounds of lens spaces 
even if the lens spaces are not Dehn surgeries of knots in $S^3$.
Namely, the 4-manifolds are simply connected and homologically $S^2$ but 
never admit exact two Morse critical points.
The strategy is to give lens space surgeries over homology spheres which bound contractible 4-manifolds.
The reason for the lens space not to bound a 4-manifold with exact two Morse critical points is by the work of Greene in \cite{G}.
For example, such Brieskorn homology spheres are $\Sigma(2,5,7)$, $\Sigma(2,3,13)$ or $\Sigma(3,4,5)$ etc.
See \cite{AK}.

As mentioned at Section 1.6 in \cite{G}, $L(17,15)$ 
cannot bound a 4-manifold with index 0 and 2.
However the lens space bounds a 4-manifold with $b_2=1$.
Here, we give families of lens spaces (including $L(17,15)$) which bound 4-manifold with $b_2=1$.
Lens space surgeries over $\Sigma(2,3,13)$ can be already viewed in Theorem~\ref{sigma236npm1} by taking $n=2$ and plus sign.
\begin{prop}
\label{Joshua}
For any non-zero integer $\ell$, there are quadratic families of lens space surgeries on $\Sigma(2,5,7)$ as follows:
$$L(35\ell^2+21\ell+3,(7\ell+2)^2)\ \ \ (\A \text{ type }surgeries)$$
$$L(117\ell^2+37\ell+3,(13\ell+2)^2)\ \ \ (\C_1\D_1 \text{ type }surgeries)$$
$$L(117\ell^2+145\ell+45,(13\ell+8)^2)\ \ \ (\C_2\D_2 \text{ type }surgeries)$$
For any non-zero integer $\ell$, there are quadratic families of lens space surgeries on $\Sigma(3,4,5)$ as follows:
$$L(86\ell^2+37\ell+4,(43\ell+9)^2)\ \ \ (\E \text{ type }surgeries)$$
$$L(86\ell^2+49\ell+7,(43\ell+12)^2)\ \ \ (\E \text{ type }surgeries)$$

\end{prop}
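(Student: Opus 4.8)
The plan is to realize each of the five families as a specialization of the type-wise constructions developed in Sections~\ref{Ataipu}--\ref{ktypelensspaceknotsection}, which place the double-primitive knots $K_{p,k}$ inside the Brieskorn homology spheres recorded in {\sc Table}~\ref{s} together with explicit formulas for the slope $p$ and the dual class $k$. The key simplification I would exploit is that the surgered manifold never has to be computed directly: once $K_{p,k}$ is identified as a double-primitive knot on a genus-two Heegaard surface of the given sphere, with surface slope $p$ and dual class $k$, the basic fact that every double-primitive knot is a lens space knot, combined with the standard surgery description of the dual of a simple $(1,1)$-knot, forces the positive $p$-surgery to be $L(p,k^2)$ after the normalization $0<k<p/2$. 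Thus the entire task reduces to exhibiting, for each target sphere, the correct member of a general family and reading off the pair $(p,k)$ as an explicit quadratic in $\ell$.

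For $\Sigma(2,5,7)$ I would first exploit that this sphere carries several inequivalent genus-two Heegaard structures, which is precisely why it contributes under three different type labels. Realize it first as the member $S(1,(2,1),(5,1),(7,t))$ of the $\A$-type row of {\sc Table}~\ref{s} (the case $s=2$, with the free multiplicity $t$ chosen so that the data presents $\Sigma(2,5,7)$ as an integral homology sphere). Feeding these Seifert invariants into the general $\A$-type slope and dual-class formulas and simplifying should return $p=35\ell^2+21\ell+3$ and $k=7\ell+2$; the factorization $35=5\cdot 7$ of the leading coefficient serves as a convenient arithmetic checkpoint. The remaining two families come from the type-CD realization $S(1,(m,m-1),(\beta,1),\ast)$ of the same sphere (for suitable $m$ and $\beta$), in which the $\C$- and $\D$-type recipes produce a common slope polynomial; the same substitution-and-simplification yields $p=117\ell^2+37\ell+3$, $k=13\ell+2$ and $p=117\ell^2+145\ell+45$, $k=13\ell+8$, with $117=9\cdot 13$ as the checkpoint.

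The sphere $\Sigma(3,4,5)$ is treated in exactly the same way through the $\E$-type row $S(1,(2m-1,m-1),(\alpha,1),(\beta,1))$ of {\sc Table}~\ref{s}: choose $m,\alpha,\beta$ so that the three singular fibres acquire orders $3,4,5$, substitute into the general $\E$-type formulas, and simplify. This should produce the two quadratic families $p=86\ell^2+37\ell+4$, $k=43\ell+9$ and $p=86\ell^2+49\ell+7$, $k=43\ell+12$, with the leading coefficient $86=2\cdot 43$ as checkpoint.

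The main obstacle, and the only part that is not purely mechanical, is the bookkeeping in matching each small sphere to the correct family member: one must check that the selected Seifert data meet the pairwise-coprimality and homology-sphere constraints recorded in {\sc Table}~\ref{s}, that the multiplicities lie in the ranges for which the type-wise construction is valid, and in the $\Sigma(2,5,7)$ case that the three presentations really give the \emph{same} manifold. A built-in consistency test is Theorem~\ref{rasmussenthm}: since $\Sigma(2,5,7)$ and $\Sigma(3,4,5)$ are non-L-spaces (indeed they bound contractible $4$-manifolds), any lens space knot in them must satisfy $2g(K_{p,k})-1>p$, so the constructed knots are forced to carry a positive genus defect; a specialization returning $2g-1<p$ would immediately flag an error. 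Once these checks pass, what remains is the routine algebraic simplification into the stated quadratics and the verification, for instance by a continued-fraction computation on the plumbing, that the reduced second lens-space parameter is indeed $k^2$.
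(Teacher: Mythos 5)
Your overall strategy---specializing the general type-wise constructions of Sections~\ref{Ataipu}--\ref{ktypelensspaceknotsection} to Seifert data presenting $\Sigma(2,5,7)$ and $\Sigma(3,4,5)$---is the same as the paper's, and your matching of the spheres to the rows of {\sc Table}~\ref{s} (A and CD type for $\Sigma(2,5,7)$, E type for $\Sigma(3,4,5)$) is correct. But your ``key simplification'' conceals a genuine gap. You assume there are general closed-form slope and dual-class formulas per type into which the Seifert invariants can be fed; no such formulas exist in the paper. The type-wise propositions output only the surgered manifold, as a continued fraction $p/q$ of a linear plumbing, and the dual class $k$ is not read off from anything: it is determined by computing the $b$-sequence of that continued-fraction expansion and matching it against {\sc Table}~\ref{continuedpo2}. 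That computation is exactly the ``direct computation of the surgered manifold'' you claim to avoid, and it is the actual content of the paper's proof: for the A family the paper sets $a=5$, $B=[3,-2]$ in {\sc Figure}~\ref{graphdeformation}, evaluates $[-2,3,1,\ell+2,7,-\ell]=-\frac{35\ell^2+21\ell+3}{14\ell^2+7\ell+1}$, and only then identifies $k=7\ell+2$ from the $b$-sequence $(0,0,-1,0,1)$. Saying that double-primitivity plus slope $p$ and dual class $k$ forces the surgery to be $L(p,k^2)$ is true but circular as a proof plan, since establishing that the constructed knot has dual class $k$ is the whole problem; your closing concession that one must anyway verify $q\equiv k^2$ by a continued-fraction computation effectively re-imports the paper's proof and nullifies the claimed shortcut.

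Second, you ignore orientations, and here they are not cosmetic. The Seifert data arising from the constructions are $-S(1,(2,1),(5,1),(7,2))$ and $-S(1,(3,1),(4,1),(5,2))$, so the plumbing computations naturally take place in the reversed-orientation spheres; indeed, for the CD and E families the raw continued-fraction output satisfies $q\equiv -k^2 \bmod p$, and only after reversing orientation does one obtain $L(p,k^2)$. The paper needs Corollary~\ref{positivesurgery} together with the $b$-sequence precisely to conclude that these are \emph{positive} integral surgeries on the standardly oriented $\Sigma(2,5,7)$ and $\Sigma(3,4,5)$. Since the paper's convention is that all slopes are positive, and since orientation can be fatal (by \cite{MT5}, $-\Sigma(2,3,5)$ admits no positive lens space knot at all), this cannot be waved through by the normalization $0<k<p/2$. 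Two smaller points: your Rasmussen ``consistency test'' is not operational (you never compute a genus, and the correct inequality for a non-L-space is $2g(K)-1\ge p$, not a strict one), and bounding a contractible $4$-manifold does not imply being a non-L-space ($S^3$ bounds $B^4$); the non-L-space property of these two spheres comes instead from the classification of Seifert fibered L-spaces.
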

In particular, $L(17,15)$ is given by an A type $17$-surgery in $\Sigma(2,5,7)$ 
in {\sc Table}~\ref{s}.
In fact, this knot $K$ is isotopic to $K_{17,7}$.
$\C_1D_1$ type or $\C_2\D_2$ type will be explained in Section~\ref{CDEtaipu}.

This proposition implies the existence of sufficiently many families of lens spaces which bound simply connected 4-manifolds with $b_2=1$ but do not have exact two Morse critical points.
On the other hand $L(17,2)$ (the orientation reversing $L(17,15)$) appears in $\A_2$ type ($n=2$, minus sign, and $\ell=-1$) in {\sc Table}~\ref{lens236n1}.
$L(17,2)$ is the 17-surgery of a lens space knot in $\Sigma(2,3,11)$.
Read \cite{G}.

In this way, easily the reader would be able to find lens spaces bounding 4-manifolds $b_2=1$.

\section{Preliminary}
\label{simple11knot}
\subsection{Simple (1,1)-knot.}
\label{simple11knotsection}
Let $p$ be a positive integer.
Let $\tilde{K}$ be a knot in $L(p,q)$ that is a generator in $H_1(L(p,q))$.
Let $m$ be the meridian of $\tilde{K}$.
The $m$ lies on $\partial L(p,q)_0$, where $L(p,q)_0$ is the exterior of $\tilde{K}$.
We choose a longitude $l$ of $\tilde{K}$.
The pair $([m],[l])$ produces an oriented basis on $H_1(\partial L(p,q)_0)$.
Since $H_1(L(p,q))\cong {\mathbb Z}/p{\mathbb Z}$, $a[m]+p[l]$ is null-homologous in $H_1(L(p,q)_0)$ for some integer $a$.
Namely, $H_1(L(p,q)_0)= \langle [m],[l]\rangle/a[m]+p[l]\cong {\mathbb Z}$ holds.
The number $a\bmod p$ is the invariant of $\tilde{K}$ because another choice of $l$ changes $a$ to $a+np$ for some integer $n$. 
The number $a$ modulo $p$ is called a {\it self-linking number}.
The computation of the linking form $\ell k:H_1\times H_1\to {\mathbb Q}/{\mathbb Z}$ gives $\ell k([\tilde{K}], [\tilde{K}])=(a[m]+p[l])\cdot [l]/p=a/p\bmod 1$.

If $\tilde{K}$ is a generator is a generators in $H_1(L(p,q))$, $L(p,q)_0$ is homeomorphic to the exterior of a knot in a homology sphere.
Suppose that the integral $b$-surgery produces a homology sphere $Y$ with the dual knot $K$.
Then $a[m]+p[l]$ represents a longitude of $K$ and $b[m]+[l]$ is a meridian of $K$.
For $\epsilon=\pm1$, if $(\epsilon(b[m]+[l]),a[m]+p[l])$ is the oriented basis with the same orientation as $([m],[l])$,
then $\epsilon(bp-a)=1$ holds.
The Dehn surgery of $K$ produces $L(p,q)$.
\begin{lem}
If the positive $p$-surgery $Y_p(K)$ is homeomorphic to $L(p,q)$, then $a\equiv 1\bmod p$.
\end{lem}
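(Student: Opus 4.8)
The plan is to determine the surgery coefficient on $K$ of the slope that reproduces $L(p,q)$, and then to compare it with the hypothesis that this slope is the \emph{positive} $p$-surgery $Y_p(K)$. Everything needed is already recorded in the paragraph preceding the lemma: the meridian and longitude of $K$ are
$$\mu_K=b[m]+[l],\qquad \lambda_K=a[m]+p[l],$$
and $\epsilon(bp-a)=1$ with $\epsilon=\pm1$. Note that $\mu_K\cdot\lambda_K=bp-a=\epsilon$, so with respect to the oriented basis $([m],[l])$ the correctly oriented meridian of $K$ is $\mu:=\epsilon\mu_K$, which satisfies $\mu\cdot\lambda_K=+1$.

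First I would set $\lambda:=\lambda_K$ and invert the unimodular change of basis
$$\begin{pmatrix}\mu\\ \lambda\end{pmatrix}=\begin{pmatrix}\epsilon b & \epsilon\\ a & p\end{pmatrix}\begin{pmatrix}[m]\\ [l]\end{pmatrix},$$
whose determinant is $\epsilon(bp-a)=1$. This gives $[m]=p\mu-\epsilon\lambda$. Since $L(p,q)$ is obtained from the common exterior $M=L(p,q)_0=Y\setminus N(K)$ by filling along $[m]$, and since in the basis $(\mu,\lambda)$ this slope is $p\mu-\epsilon\lambda$, the surgery on $K$ producing $L(p,q)$ is integral with coefficient $p/(-\epsilon)=-\epsilon p$. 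The hypothesis that this is the positive $p$-surgery forces $-\epsilon p=p$, hence $\epsilon=-1$; substituting into $bp-a=\epsilon$ gives $a=bp+1\equiv 1\bmod p$, as claimed.

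As a cross-check I would also compute the self-linking number of $\tilde K$ directly. When $\epsilon=-1$ the meridian of $\tilde K$ is $[m]=p\mu+\lambda$, and working in $H_1(M)\cong{\mathbb Z}\langle\mu\rangle$, where $\lambda\mapsto 0$, the class $a[m]-p\mu$ (the longitude $-\mu$ being chosen so that the meridian--longitude pair is positively oriented) reduces to $p(a-1)\mu$; this is null-homologous exactly when $a\equiv 1\bmod p$. Since this $a$ is by definition the self-linking number of $\tilde K$, the two computations agree. The one point requiring care throughout is the orientation bookkeeping --- the sign of $\epsilon$, the normalization $\mu=\epsilon\mu_K$ of the meridian, and the orientation of the auxiliary longitude --- because a single sign slip would replace the conclusion $a\equiv 1$ by $a\equiv -1$.
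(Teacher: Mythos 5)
Your proof is correct and takes essentially the same approach as the paper's: both rest on the unimodularity relation $\epsilon(bp-a)=1$ and the observation that the slope refilling $L(p,q)$ is the meridian $[m]$, so that positivity of the $p$-surgery forces $\epsilon=-1$ and hence $a=bp+1\equiv 1\bmod p$. The only difference is cosmetic --- the paper writes the $p$-slope $p\epsilon(b[m]+[l])+a[m]+p[l]$ in the $([m],[l])$ basis and demands it be $\pm[m]$, whereas you invert the change of basis and read the coefficient $-\epsilon p$ off from $[m]=p\mu-\epsilon\lambda$.
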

\begin{proof}
Since $p\epsilon(b[m]+[l])+a[m]+p[l]$ must be homologous to $\pm [m]$, $\epsilon=-1$ holds.
Thus $a=bp-\epsilon\equiv 1\bmod p$ holds.
\end{proof}
As a result we have the following corollary:
\begin{cor}
\label{1deter}
For a longitude $l$ of a homologically generating knot $\tilde{K}\subset L(p,q)$,
there exists an integer $b$ uniquely such that $b$-surgery of $\tilde{K}$ is a homology sphere.
\end{cor}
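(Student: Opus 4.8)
The plan is to read the first homology of the surgered manifold directly off the presentation $H_1(L(p,q)_0)=\langle [m],[l]\rangle/(a[m]+p[l])$ already obtained, and then to turn the homology-sphere condition into an arithmetic equation in $b$. Performing the integral $b$-surgery on $\tilde K$ means Dehn filling the exterior $L(p,q)_0$ along the slope $b[m]+[l]$, so the resulting closed manifold $Y$ has $H_1(Y)$ equal to the cokernel of the integer matrix
\[
M=\begin{pmatrix} a & p \\ b & 1 \end{pmatrix},
\]
that is, $H_1(Y)\cong \langle [m],[l]\rangle/\langle a[m]+p[l],\,b[m]+[l]\rangle$. First I would observe that this group is finite of order $|\det M|=|bp-a|$ when $\det M\ne 0$ and is infinite otherwise, so $Y$ is a homology sphere if and only if $|bp-a|=1$, equivalently $\epsilon(bp-a)=1$ for some $\epsilon\in\{\pm1\}$. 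This is exactly the basis condition recorded just before the statement, so no new computation is needed at this step.

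Next I would settle uniqueness arithmetically. Fixing the longitude $l$ fixes the integer $a$: the kernel of $H_1(\partial L(p,q)_0)\to H_1(L(p,q)_0)$ is the rank-one subgroup with primitive generator $a[m]+p[l]$, and normalising the $[l]$-coefficient to $+p$ pins $a$ down, while $\tilde K$ being homologically generating forces $\gcd(a,p)=1$. If two integers $b,b'$ both produced homology spheres then $bp-a,\,b'p-a\in\{1,-1\}$, and subtracting gives $(b-b')p\in\{-2,0,2\}$; hence for $p\ge 3$ either $b=b'$ or $p\mid 2$, and the latter is impossible. Equivalently, $b=(a+\epsilon)/p$ and at most one sign $\epsilon$ can make the right-hand side an integer because $\gcd(a,p)=1$. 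Thus $b$ is unique for the lens spaces under consideration; the degenerate value $p=2$ would have to be flagged separately.

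The main obstacle is existence, and here the homology bookkeeping alone is not enough. Producing an integer $b$ with $bp-a=\pm1$ is equivalent to the congruence $a\equiv\pm1\pmod p$, which a completely general homologically generating knot need not satisfy: completing the primitive slope $a[m]+p[l]$ to a basis of $H_1(\partial L(p,q)_0)$ only yields some homology-sphere filling slope, not necessarily an integral one on $\tilde K$. I would import the needed congruence from the structural set-up surrounding the statement. For the dual knot $\tilde K=\tilde K_{p,k}$ of a simple $(1,1)$-knot, the exterior $L(p,q)_0$ is by construction the exterior of a knot $K$ in a homology sphere whose meridian is an integral slope $b[m]+[l]$, and the preceding lemma delivers exactly $a\equiv 1\pmod p$; taking $\epsilon=-1$ then gives the explicit solution $b=(a-1)/p$. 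With existence secured in this way, the uniqueness argument above completes the proof.
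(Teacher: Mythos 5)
Your argument is correct and is essentially the paper's own: both reduce the homology-sphere condition to the linear equation $bp-a=\pm 1$ in the basis $([m],[l])$ and extract uniqueness of the integer $b$, with the sign pinned down by the preceding lemma ($\epsilon=-1$, i.e.\ $a=pb+1$, so $b=(a-1)/p$). The one substantive difference is your treatment of existence, and your caution there is justified: the paper's proof is purely a uniqueness argument (``if the integral $b$-surgery is a homology sphere, then $pb+1=a$, thus $b$ is uniquely determined'') and never establishes existence, which indeed fails for a general homologically generating knot --- e.g.\ the core circle of $L(5,2)$ has self-linking invariant $a\equiv 3\pmod 5$, and $|5b-3|=1$ has no integer solution. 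So the corollary should be read, and is only ever used in the paper, as an ``at most one framing'' statement for the ${\mathbb Z}$HS-surgeries that are known to exist in its constructions; that is exactly the reading you adopt by importing $a\equiv 1\pmod p$ from the lemma. Your flagging of the degenerate cases $p\le 2$, where both signs of $\epsilon$ give integral $b$ and uniqueness needs the orientation convention built into the lemma, is a refinement the paper silently omits.
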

\begin{proof}
If the integral $b$-surgery of $\tilde{K}$ with respect to $l$ is a homology sphere,
then $pb+1=a$, where the $a[m]+p[l]$ is null-homologous in $H_1(L(p,q)_0)$.
Thus $b$ is uniquely determined.
\end{proof}
Here we give a corollary.
\begin{cor}
\label{positivesurgery}
Let $p$ be a positive integer.
If $L(p,q)=Y_p(K)$ with dual class $k$ and with a homology sphere $Y$, then $k^2=q\bmod p$.
\end{cor}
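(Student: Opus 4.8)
The plan is to use the self-linking value of the dual knot as a bridge between the two surgery descriptions of $L(p,q)$, and to read off $q$ from the linking form. Recall that $\ell k\colon H_1\times H_1\to\mathbb{Q}/\mathbb{Z}$ is an orientation invariant and is $\mathbb{Z}$-bilinear, so its value on any class is determined by its value on a chosen generator together with the integer coefficient.

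First I would compute $\ell k([\tilde K],[\tilde K])$ from the description $L(p,q)=Y_p(K)$. The dual knot $\tilde K$ generates $H_1(L(p,q))\cong\mathbb{Z}/p\mathbb{Z}$, and the computation preceding the lemma gives $\ell k([\tilde K],[\tilde K])=a/p\bmod 1$, where $a$ is the self-linking number of $\tilde K$. Since $Y_p(K)$ is a positive $p$-surgery producing a lens space, the lemma gives $a\equiv 1\bmod p$, whence $\ell k([\tilde K],[\tilde K])\equiv 1/p\bmod 1$. Next I would compute the same form from the standard genus one description $L(p,q)=S^3_{p/q}(U)$: on the generator $g_0$ arising from that surgery (the one used to identify $H_1(L(p,q))$ with $\mathbb{Z}/p\mathbb{Z}$ and hence to measure $k$) the linking form is $\ell k(g_0,g_0)\equiv q^{-1}/p\bmod 1$, up to the sign fixed by the orientation convention $L(p,q)=S^3_{p/q}(U)$; I would pin this down by the usual surgery-disk intersection computation, using that $\lambda$ bounds a meridian disk of the unknot complement and the surgery meridian $p\mu+q\lambda$ bounds a disk in the glued solid torus. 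By definition the dual class $k$ is the integer with $[\tilde K]=k\,g_0$, so bilinearity gives
$$\ell k([\tilde K],[\tilde K])=k^2\,\ell k(g_0,g_0)\equiv \frac{k^2 q^{-1}}{p}\bmod 1.$$
Comparing the two computations yields $k^2 q^{-1}\equiv 1\bmod p$, that is $k^2\equiv q\bmod p$.

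The routine but delicate part is matching the orientation and generator conventions, so that both linking-form computations refer to the same oriented $L(p,q)$ and the final congruence emerges as $k^2\equiv q$ rather than $k^2\equiv -q$ or $k^2\equiv q^{-1}$. I expect this sign-and-inverse bookkeeping in $\ell k(g_0,g_0)$ to be the only real obstacle, and any residual discrepancy of that shape is harmless: the dual class $k$ is defined only up to inversion and multiplication by $-1$ in $(\mathbb{Z}/p\mathbb{Z})^\times$, and correspondingly $L(p,q)\cong L(p,q^{\pm1})$, so the identity $k^2\equiv q$ is to be read modulo exactly this ambiguity.
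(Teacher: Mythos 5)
Your proposal is correct and follows essentially the same route as the paper's own proof: both compute $\ell k([\tilde K],[\tilde K])=1/p$ via the preceding lemma ($a\equiv 1\bmod p$), write $[\tilde K]=k[c]$ for the core circle $c$ of the genus one Heegaard solid torus coming from $S^3_{p/q}(U)$, and use bilinearity together with $\ell k([c],[c])=q^{-1}/p$ to conclude $k^2\equiv q\bmod p$. The only difference is that you spell out the orientation and sign bookkeeping that the paper leaves implicit, which is a harmless elaboration, not a different argument.
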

\begin{proof}
Suppose that $\tilde{K}$ is the dual knot of the lens space surgery with $[\tilde{K}]=k[c]$ for the core circle $c$ in $V_1$ of genus one Heegaard decomposition $V_0\cup V_1$ of $L(p,q)$, where $V_1$ is the neighborhood of the dual knot of $S^3_{p/q}(\text{unknot})$.
Thus comparing the linking form $1/p=\ell k(k[c],k[c])=k^2\cdot \ell k([c],[c])=k^2q'/p\bmod p$, where the self-linking number of $[c]$ is $q'$, where $q'$ is the inverse of $q$ in $({\mathbb Z}/p{\mathbb Z})^\times$.
Thus $q=k^2\bmod p$ holds.
\end{proof}

Here we give definitions of (1,1)-knot and simple (1,1)-knot.
%If $k$ is a multiplicative generator in ${\mathbb Z}/p{\mathbb Z}$, we denote by $k'$ the inverse $k^{-1} \in {\mathbb Z}/p{\mathbb Z}$.
% $\{\pm k,\pm k'\}\subset {\mathbb Z}/p{\mathbb Z}$ gives an invariant of the lens surgery.

%Kadokami and Yamada obtain the following formula of the Alexander polynomials of lens space knots in homology spheres.
%$\Delta_K(t)$ means that the Alexander polynomial of knot $K$.
%\begin{thm}{\cite{KY}}
%Let $Y$ be a homology sphere and $K\subset Y$ a knot lens space knot with slope $p$.
%If $Y_{p/q}(K)$ is a lens space, then there exists positive integers $h,k$ with $p,h,k$ mutually coprime and $hkq\equiv \pm1\bmod p$ such that
%$$\Delta_K(t)\doteq\Delta_{T_{h,k}}(t)\bmod t^p-1.$$
%\end{thm}
%Here $T_{a,b}$ is the $(a,b)$-torus knot.
%Let $a_i(K)$ be the $i$-th coefficient of symmetrized Alexander polynomial $\Delta_K(t)$.
%We define $\tilde{a}_i(K)$ to be 
%$$\tilde{a}_i(K)=\sum_{j\equiv i\bmod p}a_j(K).$$
%In \cite{MT} we gave a formula of $\tilde{a}_i(K)$ that $K$ is a lens space knot in a homology sphere.
\begin{defn}[(1,1)-knot in a 3-manifold]
\label{definitionofsimple11knot}
Let $L$ be a 3-manifold with Heegaard genus at most 1.
Let $V_0\cup V_1$ be a genus 1 Heegaard splitting of $L$.
We say that a knot $K'$ in $L$ is a (1,1)-knot if $K'$ is isotopic to $K$ satisfying the following conditions:
\begin{enumerate}
\item (transversality condition)\ $K$ transversely intersects with the Heegaard surface $\partial V_0=\partial V_1$.
\item ($\partial$-parallel condition) $K_i=V_i\cap K$ is a proper-embedded {\it boundary-parallel} arc in $V_i$, i.e., there exists an embedded disk $D_i$ in $V_i$ with $\partial D_i=K_i\cup \a_i$ where $\a_i$ is a simple arc in $\partial V_i$.
\end{enumerate}
We call such a position of the knot $K$ in $L$ {\it (1,1)-position}.
The (1,1) means that the manifold has a genus one Heegaard decomposition and the knot is 1-bridge.
\end{defn}
Furthermore, we define a {\it simple} (1,1)-knot.
\begin{defn}[Simple (1,1)-knot $\tilde{K}_{p,q,k}$ in $L(p,q)$]
Let $K$ be a (1,1)-knot in the genus 1 Heegaard splitting $V_0\cup V_1$ of lens space $L(p,q)$.
Let $D_i$ be the meridional disk of $V_i$.
Let $\alpha,\beta$ be $\partial D_0$ and the image of $\partial D_1$ on $\partial V_0$ respectively ({\sc Figure}~\ref{simple}).
We assume that $\a$ and $\b$ are minimally intersecting.
Then if $K$ is isotopic to the union of the two arcs each of which is embedded in $D_i$,
we call such a knot $K$ a {\it simple (1,1)-knot} in $L(p,q)$.
The embedded arc $K_0$ in $V_0$ is described as in {\sc Figure}~\ref{simple}.
\end{defn}
Note that non-simple (1,1)-knot cannot be {\it simultaneously} embedded in the meridional disks of the genus one Heegaard decomposition.

As in {\sc Figure}~\ref{simple}, the minimal intersection $\a\,\cap\, \b$ consists of $p$ points.
We name the points as $\{0,1,\cdots, p-1\}$ in order.
Suppose that the arc $K_0$ of simple $(1,1)$-knot in $V_0$ is connecting the points $0$ and $k$ in the meridian disk.
Then we denote such a simple (1,1)-knot by $\tilde{K}_{p,q,k}$.

\begin{figure}[htbp]
\begin{center}
\includegraphics{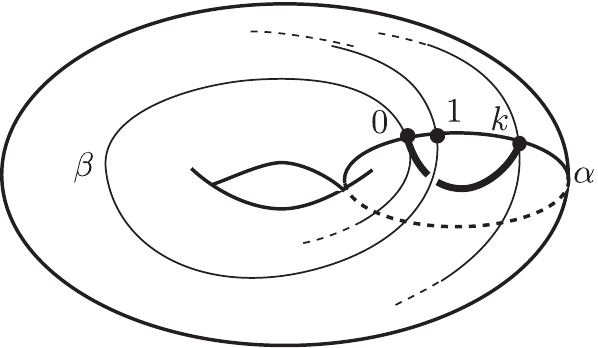}
\caption{A simple (1,1)-knot $K_0$ in $V_0$.}
\label{simple}
\end{center}
\end{figure}

By the definition of simple (1,1)-knot, $\tilde{K}_{p,q,k}$ is isotopic to $\tilde{K}_{p,q,-k}$.
We denote $\tilde{K}_{p,k^2,k}$ by $\tilde{K}_{p,k}$
and the integral ${\mathbb Z}$HS surgery of $\tilde{K}_{p,k}$ by $Y_{p,k}$.
We denote the dual knot of the surgery by $K_{p,k}\subset Y_{p,k}$.
Hence, we can write the surgery as $(Y_{p,k})_{p}(K_{p,k})=L(p,k^2)$.

By exchanging the role of the solid tori, the dual class $k$ is changed as $k\mapsto \pm k^{-1}$ in $({\mathbb Z}/p{\mathbb Z})^\times$.
Hence, we have the ambiguity $\mathcal{K}:=\{\pm k^{\pm1}\}\subset \{0,1,\cdots, p-1\}$ as a set of the smallest positive remainders.
We choose $k$ as the minimal element in $\mathcal{K}$ and $k_2$ is the second minimal one with $kk_2=\pm1\bmod p$ (possibly $k=k_2$).
Throughout the paper we define $\frak{c}$ to be $(k-1)(k+1-p)/2$.

\subsection{Brieskorn homology spheres.}
Brieskorn sphere $\Sigma(a_1,a_2,a_3)$ is defined to be $\{(z_1,z_2,z_3)\in {\mathbb C}^3|z_1^{a_1}+z_2^{a_2}+z_3^{a_3}=0\}\cap S^5$.
$\Sigma(a_1,a_2,a_3)$ is a Seifert 3-manifold with $S^2$ base space with three multiple fibers.
The general properties of Seifert manifolds lie in \cite{Or}.
The Seifert manifold is presented by {\it multiplicity} $(a,b)$ for the multiple fiber {\it Euler number} $e$,
where the numbers $(a,b)$ are coprime integers.
Hence, the Brieskorn homology sphere $\Sigma(a_1,a_2,a_3)$ is presented as follows:
$$S(e,(a_1,b_1),(a_2,b_2),(a_3,b_3)).$$
We call the presentation {\it Seifert data}.
If the Brieskorn sphere $\Sigma(a_1,a_2,a_3)$ is a homology sphere, then integers $(a_1,a_2,a_3)$ are pairwise relatively prime.
The presentation satisfies with
$$e-\sum_{i=1}^3\frac{b_i}{a_i}=\frac{\pm1}{a_1a_2a_3}$$
and is described by Kirby diagram as in {\sc Figure}~\ref{236n-2}.
For example, $\Sigma(2,2s+1,2(2s+1)n\pm1)$ admits the following Seifert data:

%In the plus case of the homology spheres we take the reverse orientation for the diagram.
$$\mp S(1,(2,1),(2s+1,s),(2(2s+1)n\pm1,n)).$$
\begin{figure}[htbp]
\begin{center}
\includegraphics{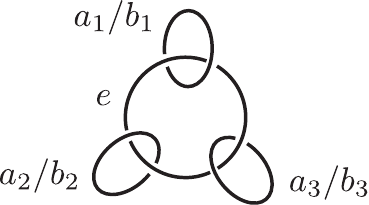}
\caption{Kirby diagram of $\Sigma(a_1,a_2,a_3)$ with Seifert data $S(e,(a_1,b_1),(a_2,b_2), (a_3,b_3))$.}
\label{236n-2}
\end{center}
\end{figure}

\section{Knot diagrams of $\tilde{K}_{p,k}$ and $K_{p,k}$.}
The purpose of this section is to give a process to obtain knot diagrams of $\tilde{K}_{p,k}$ and $K_{p,k}$.
\subsection{Involution on $L(p,k^2)$ and $\tilde{K}_{p,k}$.}
To describe $\tilde{K}_{p,k}$ in a lens space $L(p,k^2)$, we use the involution $\iota$ on $L(p,q)$.
It is a well-known that any lens space is a double branched cover along a 2-bridge knot or link in $S^3$.
Consider the genus 1 Heegaard splitting of a lens space $V_0\cup V_1$.
The Heegaard suface $T=\partial V_0=\partial V_1$ is an invariant set i.e., $\iota(T)=T$.
The restriction of the involution $\iota$ to $T$ is the same thing as the $-1$-multiplication on ${\mathbb C}/({\mathbb Z}+i{\mathbb Z})$ (topologically $180^\circ$-rotation as in {\sc Figure}~\ref{pillow}).
We set $\bar{V}_i=V_i/\iota$.
Each $\bar{V}_i$ is homeomorphic to a 3-ball.
The set $\gamma_i$ is the image of the fixed points set $\text{Fix}(V_i,\iota)$ in $\bar{V}_i$, which are two properly embedded arcs (the red dashed lines in {\sc Figure}~\ref{pillow}).
We call such a 3-ball $\bar{V}_0$ {\it pillowcase}.
The boundary points $\partial \gamma_i$ are four points and we call the points {\it vertices} of the pillowcase.
As a result, the quotient $L(p,q)/\iota$ gives a genus 0 Heegaard splitting $\bar{V}_0\cup_{S^2}\bar{V}_1$ of $S^3$.
We set the gluing map as $g_{p,q}:\partial \bar{V}_1\to \partial \bar{V}_0$.
The map is isotopic to the identity map as a homeomorphism on $S^2$.

We consider $\bar{V}_0$.
We denote the positions of the images of two parallel meridians $\alpha_0,\alpha_1$ and two parallel longitudes $l_0,l_1$ in $V_0$ by $\bar{\alpha}_0$(top), $\bar{\alpha}_1$(bottom), $\bar{l}_0$(left), $\bar{l}_1$(right), which are four arcs.
The union of the four arcs makes a {\it rectangle} $R$ on $\partial \bar{V}_0$ as the first row in {\sc Figure}~\ref{pillow}.
The four vertices of $R$ are the vertices of the pillowcase.
The $\bar{l}_i,\bar{\alpha}_i$ and $R$ are used as `coordinate' of pillowcase.
%We remark the pillowcase has not-smooth four vertices.
We can also see a similar picture in $\bar{V}_1$.
We denote the images of two parallel meridians $\beta_0,\beta_1$ in $V_1$ as in {\sc Figure}~\ref{pillow} by $\bar{\beta}_0,\bar{\beta}_1$.
\begin{figure}[htbp]
\begin{center}
\includegraphics{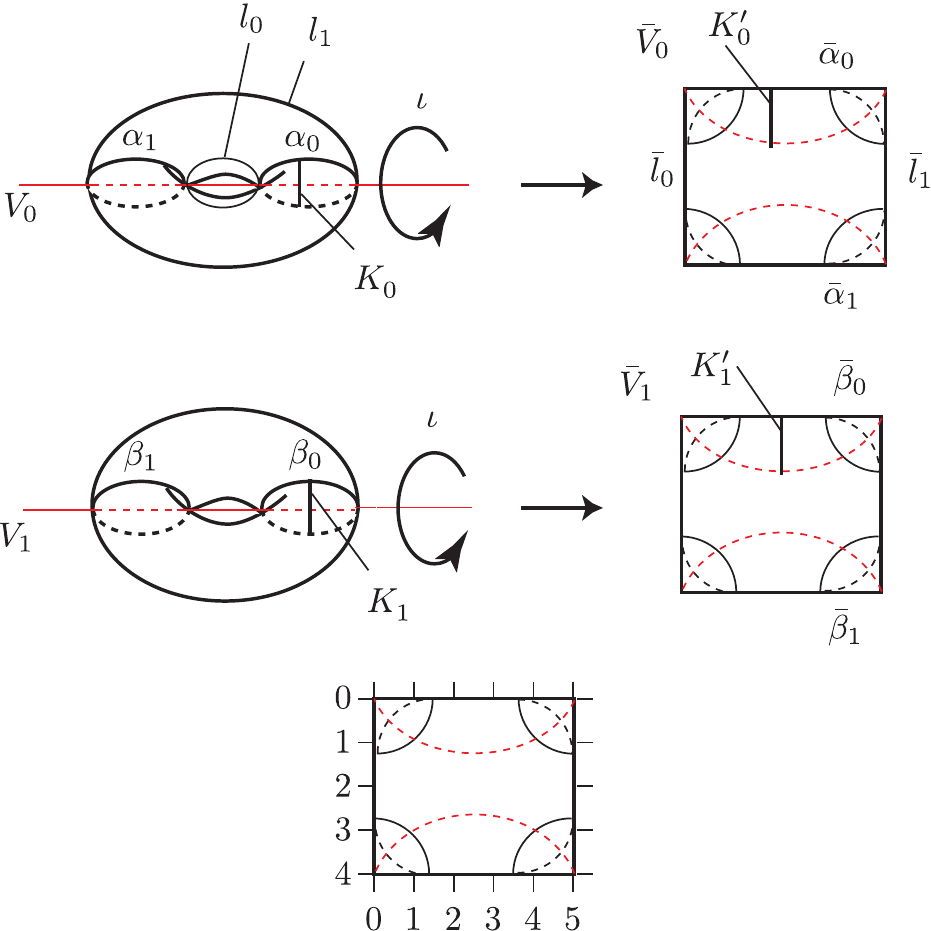}
\caption{The involution of a lens space and a branched locus of $S^3$. The red curves are fixed point sets, an example of the measure of the pillowcase with slope $p/q=\pm5/4$}
\label{pillow}
\end{center}
\end{figure}

Let $K$ be a simple (1,1)-knot $\tilde{K}_{p,q,k}$.
Let $K_i$ be the same thing as that in Definition~\ref{definitionofsimple11knot}.
The $K_i$ may be chosen to be an invariant arc as indicated in the left of above two pictures in {\sc Figure}~\ref{pillow}.
Then, each arc $K_i$ is an invariant set with respect to $\iota$.
%See the left pictures in {\sc Figure}~\ref{pillow}.
Let $K'_i$ be the image of $K_i$ in $\bar{V}_i$.
Then $K'_0$ (or $K'_1$) is an arc connecting a point in $\gamma_0$ (or $\gamma_1$) and a point of $\bar{\alpha}_0$ (or $\bar{\beta}_0$).
We determine the point $\bar{\alpha}_0\cap \partial K_0'$.
The image of $\bar{\beta}_0\cup \bar{\beta}_1$ by the gluing map $g_{p,q}$ is two lines in $\partial \bar{V}_0$ with slope $p/q$ as in {\sc Figure}~\ref{not1}.
We call $g_{p,q}(\bar{\beta}_0\cup\bar{\beta}_1)\subset \partial \bar{V}_0$ {\it pillowcase slope}.
If $p/q>0$ then the pillowcase slope is called {\it positive} and if $p/q<0$ then {\it negative}.
The inclined lines in {\sc Figure}~\ref{not1} stand for pillowcase slope in $\partial \bar{V}_0$.
On the $\bar{\alpha}_i$ there are $p+1$ intersection points with $g_{p,q}(\bar{\beta}_0\cup \bar{\beta}_1)$.
In the same way we put $q+1$ points on $\bar{l}_i\subset \partial V_0$.
These $2(p+q)$ points on the rectangle are called {\it integral points}.
The points are named $0,1,\cdots, p$ on $\bar{\alpha}_i$ from the left in order
and $0,1,\cdots, q$ on $\bar{l}_i$ from the top in order and we call this scale of the rectangle {\it measure}.
As an example of measure see the third row of {\sc Figure}~\ref{pillow}.
The boundary points of $K'_0$ are $k\in \bar{\alpha}_0$ and a point in $\gamma_0$.
\begin{figure}[htbp]
\begin{center}
\includegraphics[width=\textwidth]{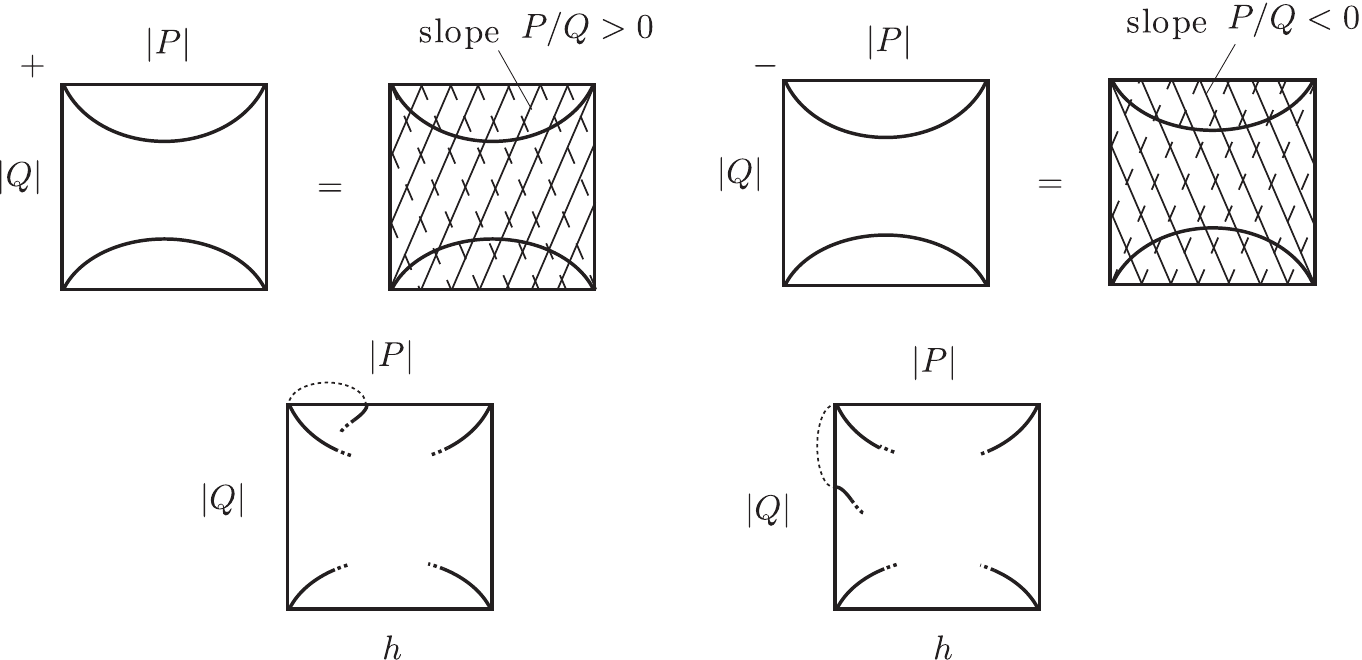}
\caption{The notation of pillow case description}
\label{not1}
\end{center}
\end{figure}

\subsection{Pillowcase method}
\label{pme}
It is the main purpose in this section that by deforming the gluing map $g_{p,q}$ of two pillowcases into the identity map, we keep track of the image of the simple (1,1)-knot in $\bar{V}_0$.
We call the following process {\it pillowcase method} here.
For coprime integers $P,Q$ we consider a triple $(P/Q,\gamma,K')$ in the pillowcase $\bar{V}$,
where $\bar{V}$ is a pillowcase $\bar{V}_0$ with rectangle $R$.
The rectangle $R$ is the same meaning as the previous section.
The pillowcase slope is $P/Q$, and $\gamma$ is a 2-component tangle whose end points are the four vertices in $\partial\bar{V}$.
$\text{Int}(K')\subset \bar{V}\setminus \gamma$ is an edge connecting a point in $R$ and a point in $\gamma$.
Here $\text{Int}$ means the interior.
If the slope is positive, negative, $0$ or $\infty$, we write $+$, $-$, $0$ or $\infty$ respectively on the left top of $R$, as in {\sc Figure}~\ref{not1}.
If the integral point of $K'\cap R$ is $h$, then we write the number under the pillowcase.
When the $K$ does not intersect with $R$, we write no number any more there. 
The second row of {\sc Figure}~\ref{not1} describes a part of $\gamma$ and
$K'$.
The intersection $K'\cap R$ is an integral point $h$ in the measure of $\bar{\alpha}_0\cup \bar{l}_0\subset R$.
Such a triple is called {\it pillowcase triple}.

Now we suppose pillowcase triple $(P/Q,\gamma,K')$ satisfies $|P|>|Q|$
and the one end point $K'\cap R$ is an integral point in $\bar{\alpha}_0$, i.e., it is in $\{0,1,2,\cdots, p\}$.
The picture is either of the top pictures in {\sc Figure}~\ref{pslope} or ~\ref{nslope} depending on the sign of $P/Q$.
The black boxes stand for some tangles of $\gamma$ and $K'$.
We divide $h$ by $Q$ as follows:
$$h=bQ-h'.$$
Here $b,h'$ are some integers.
Untwisting the end point $h\in \bar{\alpha}_0$ along the slope $P/Q$, we get the one pattern of four cases as in {\sc Figure}~\ref{pslope} or \ref{nslope}.
Here if necessary, we turn the pillowcase by $180^\circ$ so that the end point
$K'\cap R$ can be moved to $|h'|\in \bar{l}_0$.

We suppose the pillowcase triple satisfies $|P|<|Q|$.
The one end point $K'\cap R$ is an integral point in $\bar{l}_0$ as in {\sc Figure}~\ref{pslopep} or \ref{nslopep}.
In the same way as the case of $|P|>|Q|$, we divide $h$ by $P$ as follows:
$$h=bP-h'.$$
Here $b,h'$ are some integers. 
We untwist the arc $K'$ along the slope on $\partial \bar{V}$ so that 
the boundary of $K'\cap R$ lies in $\bar{\alpha}_0\cup \bar{\alpha}_1$.
Similarly, if necessary, we turn the pillowcase by $180^\circ$ so that the end point $K'\cap R$ is an integral point in $\bar{\alpha}_0$.
The pattern is one of four in {\sc Figure}~\ref{pslopep} or \ref{nslopep}.
These processes are called {\it untwisting process of} $K'$.
As a result, by untwisting process of $K'$, for the pillowcase triple $(P/Q,\gamma,K')$ with $P/Q\neq 0$ and $\infty$, $K'\cap R\in \bar{l}_0$ if $|P/Q|>1$ or $K'\cap R\in \bar{\alpha}_0$ if $|P/Q|<1$.
\begin{figure}[htbp]
\begin{center}
\includegraphics{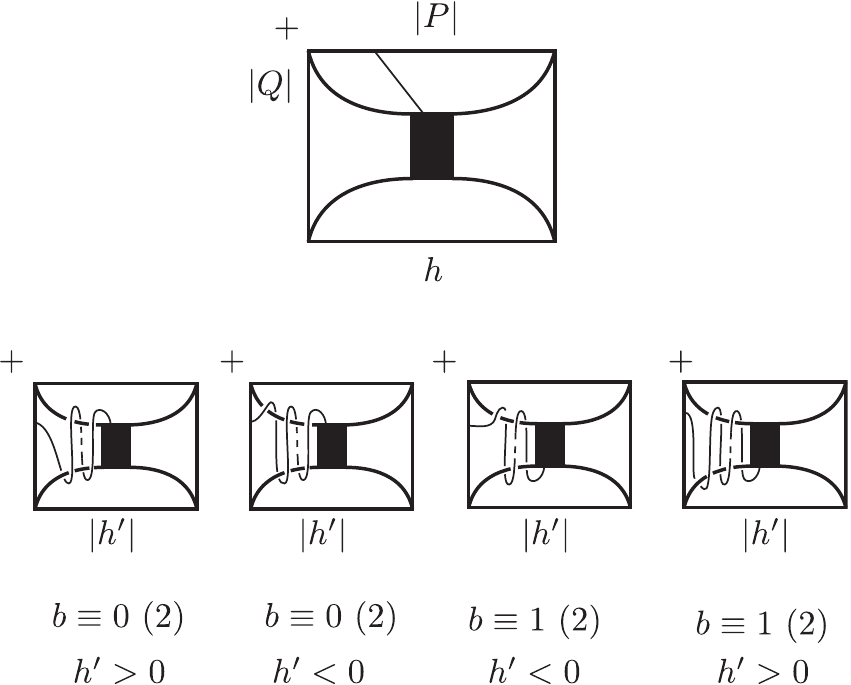}
\caption{The positive slope pillowcase in the case of $|P/Q|>1$ and the untwisting by $h=bQ-h'$.}
\label{pslope}
\end{center}
\end{figure}
\begin{figure}[htbp]
\begin{center}
\includegraphics{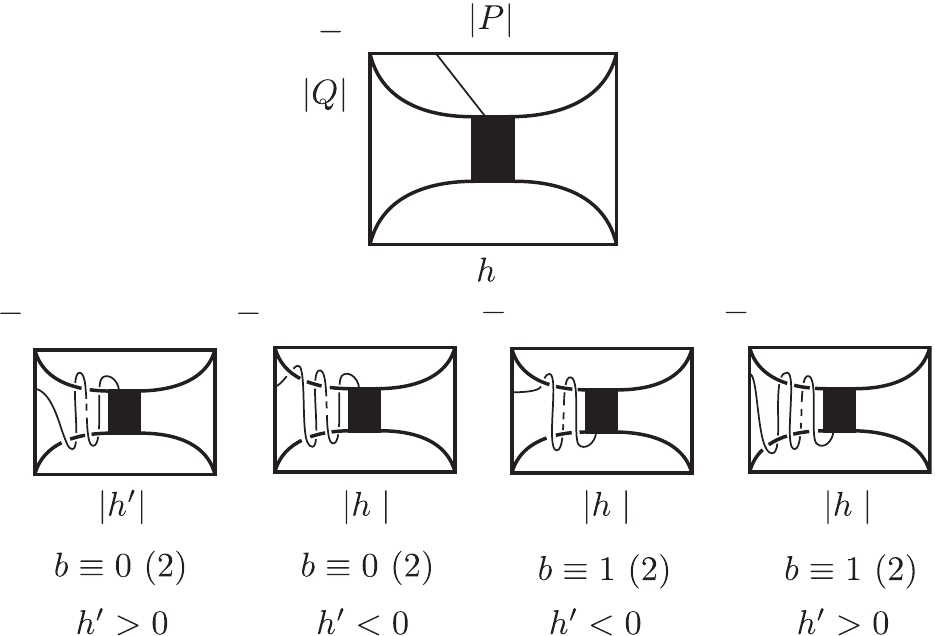}
\caption{The negative slope pillowcase in the case of $|P/Q|>1$ and the untwisting by $h=bQ-h'$.}
\label{nslope}
\end{center}
\end{figure}

\begin{figure}[htbp]
\begin{center}
\includegraphics{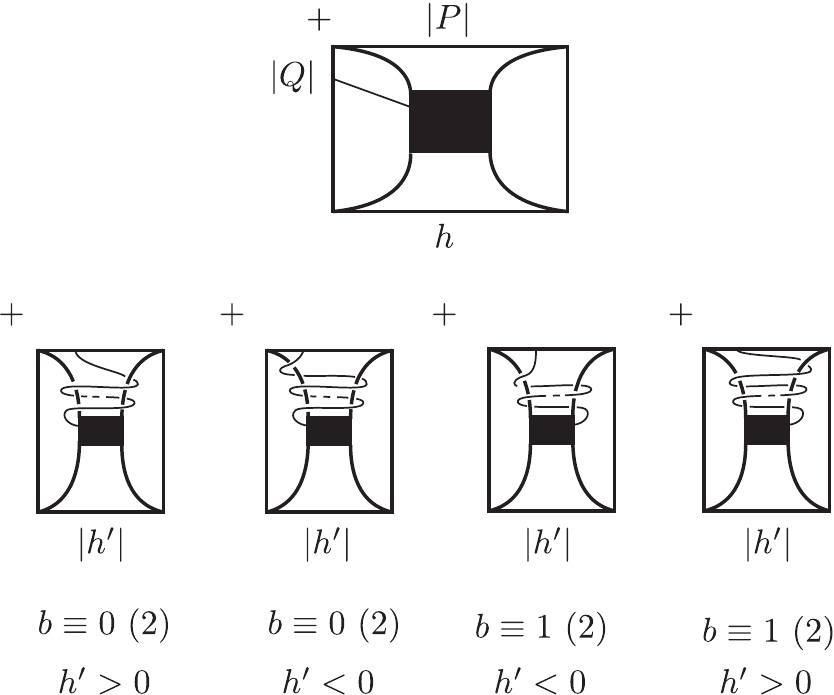}
\caption{The positive slope pillowcase in the case of $|P/Q|<1$ and untwisting by $h=bP-h'$.}
\label{pslopep}
\end{center}
\end{figure}
\begin{figure}[htbp]
\begin{center}
\includegraphics{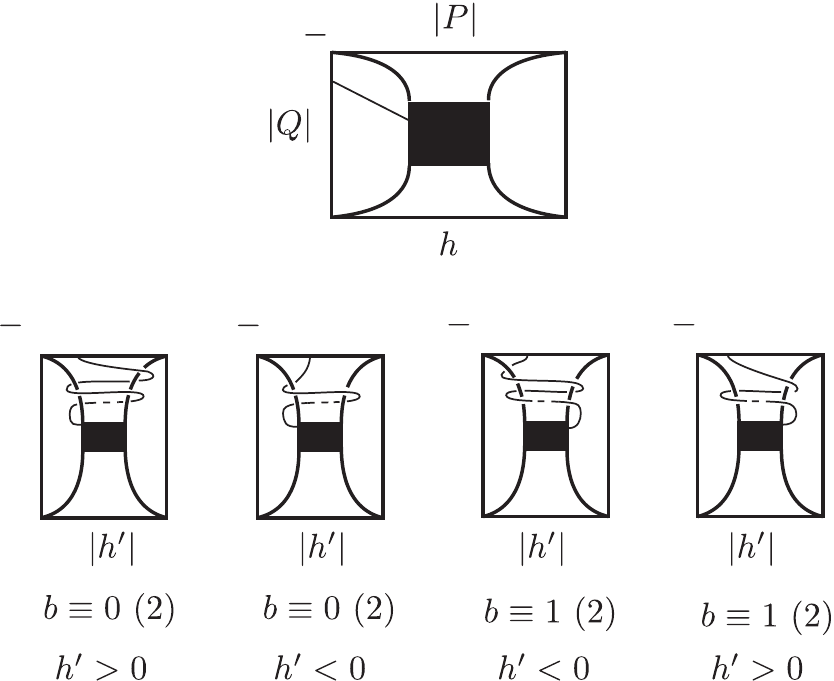}
\caption{The negative slope pillowcase in the case of $|P/Q|<1$ and untwisting by $h=bP-h'$.}
\label{nslopep}
\end{center}
\end{figure}
\begin{figure}[htbp]
\begin{center}
\includegraphics{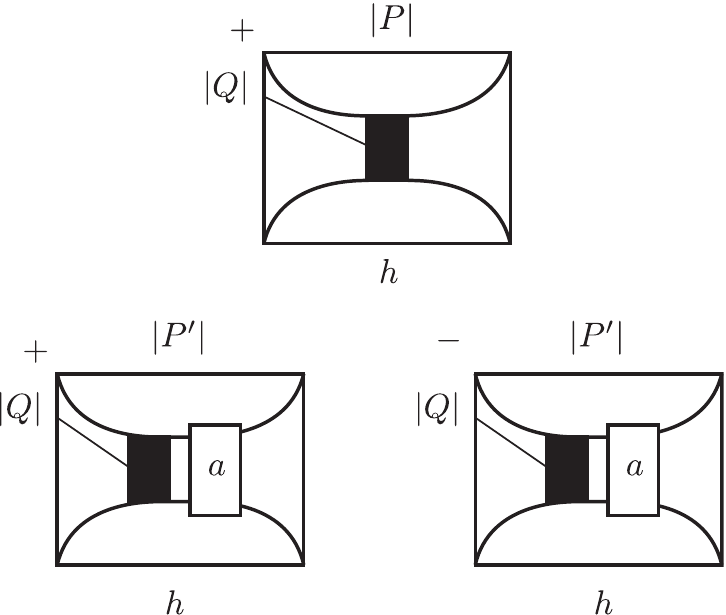}
\caption{Untwisting the pillowcase with $|P/Q|>1$.}
\label{undopi}
\end{center}
\end{figure}
Next, we untwist the pillowcase.
We suppose that $|P/Q|>1$ and $P/Q\neq\infty$.
See {\sc Figure}~\ref{undopi}.
Divide $P$ by $Q$ and we get $P=aQ-P'$, where $a$ and $P'$ are some integers.
Here we untwist the pillowcase in $\bar{V}$ by half $|a|$-times along the central horizontal line of $\bar{V}$ in the $\text{sgn}(-P/Q)$ direction.
Hence, the new pillowcase slope becomes $-P'/Q$ (the bottom pictures in {\sc Figure}~\ref{undopi}).
The box with an integer stands for the half twist by the number.

For the assumption $|P/Q|<1$, we untwist the pillowcase similarly.
Divide $Q$ by $P$ and we get $Q=aP-P'$, where $a,P'$ are some integers.
Then we deform the pillowcase as the second row in {\sc Figure}~\ref{undopi2}
according to the sign of $-P/P'$.
These pictures are the cases of $P/Q>0$.
In the case of $P/Q<0$, we can also describe a similar picture.
\begin{figure}[htbp]
\begin{center}
\includegraphics{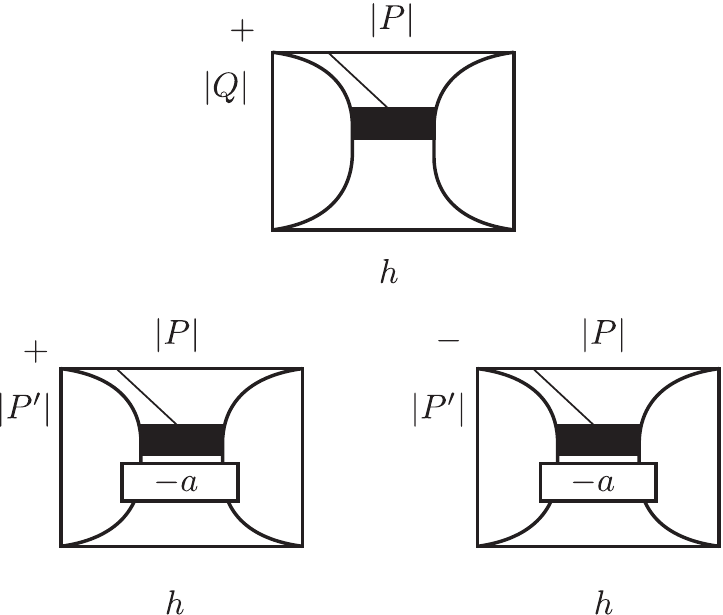}
\caption{Untwisting the pillowcase with $|P/Q|<1$.}
\label{undopi2}
\end{center}
\end{figure}

We iterate the untwisting process of $\bar{V}$ or $K'$ alternatively, 
$$(p/q,\gamma,K')\to (p/q,\gamma,K'')\to (p/p',\gamma',K'')\to  (p/p',\gamma',K''')\to \cdots.$$
%The example of iteration is {\sc Figure}~\ref{trans}.
%\begin{figure}[htbp]
%\begin{center}
%\includegraphics{transposition-eps-converted-to.pdf}
%\caption{An example iterating pillowcase process of $P=aQ-P'$.}
%\label{trans}
%\end{center}
%\end{figure}
After several untwisting processes, we obtain $P/Q=0$ or $P/Q=\infty$.
Then via the gluing map, we can push the other 2-component arcs $K_1'$ in $\bar{V}$ and gluing $\bar{V}_1$, we get a 2-bridge knot or link attaching an arc in $S^3$.

Expand the continued fraction of $p/q$ where $p=p_1,q=p_2$ as follows:
$$p_i=a_ip_{i+1}-p_{i+2}\ (i=1,\cdots, n-1),$$
and 
$$p_n=a_np_{n+1},|p_{n+1}|=1.$$
Such a sequence $p_1,p_2,\cdots, p_{n+1}$ is not uniquely determined from $p/q$, 
while the existence is guaranteed by the Euclidean algorithm.
Thus, an iteration of untwisting processes can deform any triple $(p/q,\gamma,K')$ into
a triple with slope $1/0$ or $0/1$.

Namely, we obtain the continued fraction
$$p/p_2=[a_1,a_2,\cdots, a_n]=a_1-\frac{1}{a_2-\frac{1}{a_3-\cdots-\frac{1}{a_n}}}.$$
Here we denote $\epsilon_i=(-1)^{i-1}\text{sign}(a_i)$.
Each $\epsilon_i$ corresponds to the signature of the pillowcase slope.

Here we consider the sequences $b_1,b_2,\cdots, b_n$ and $k=h_1,h_2,\cdots, h_n$ defined as follows:
$$k=b_1p_2-h_2,h_i=b_{i}p_{i+1}-h_{i+1},h_{n}=b_{n}p_{n+1}.$$
Hence, we have $k=\sum_{i=1}^{n}b_i(-1)^{i-1}p_{i+1}$.
For the lens surgery parameter $(p,k)$ we call $(a_1,\cdots,a_n)$ its {\it $a$-sequence}, $(b_1,\cdots, b_{n})$ its {\it $b$-sequence}, and $(h_1,\cdots, h_n)$ its $h$-sequence.
In {\sc Table}~\ref{continuedpo} and \ref{continuedpo2},
we give the $a$-sequences (of continued fractions of $p/q$), and $b$-sequences for lens surgeries in {\sc Table}~\ref{po}: 
$$\Sigma(2,3,5)_p(K_{p,k})=L(p,q).$$
The $p,q$ in {\sc Table}~\ref{continuedpo} correspond to $p_1$ and $p_2$ for the continued fraction of $p/q$.
Note that the $b$-sequence for $\Sigma(2,3,7)$ is 
the one of $b$-sequence with the type multiplied by $-1$ as a vector.\\
{\bf Example: An example $S^3_5(\text{trefoil})=L(5,2^2)$: pillowcase, untwisting of pillowcase or $K'$, Kirby calculus.}
Here we give an example $S_5^3(\text{trefoil})=L(5,2^2)$.
This example is well-known as the first non-trivial lens space surgery in $S^3$.
The continued fraction is 
$$5/4=1-1/(-4)=[1,-4].$$
The pillowcase method is (1)-(5) in {\sc Figure}~\ref{expi}.
We give the simple (1,1)-knot $\tilde{K}_{5,2}$ in (6)-(7).
The diagram of $K_{5,2}\subset S^3$ is given in (8)-(9).
We explain each process in detail.

First, we give a simple (1,1)-knot in $L(5,2^2)$ with dual class $2$ in the pillowcase: (1) in {\sc Figure}~\ref{expi}.
The red curves in {\sc Figure}~\ref{expi} are $\gamma$.
We untwist $K'$ as the deformation from (1) to (2) in {\sc Figure}~\ref{expi}.
We untwist the pillowcase (from the (2) to (3)).
From (3) we obtain (5) by untwisting pillowcase and $K'$ in the similar way.
Pushing $K'_1$ in $\bar{V}$, we obtain (6).
The double branched cover along the 2-bridge knot is (7).
The good reference from (6) to (7) is \cite{MW}.
This diagram presents $Y_{5,2^2}=S^3$.
The $-1$-framed component in (7) is $\tilde{K}_{5,2}$.
The 0-framed meridian $\tilde{K}_{5,2}$ in (8) is the trefoil with $+5$-framing as in (9) by trivializing the diagram other than 0-framed meridian.
\begin{figure}[htbp]
\begin{center}
\includegraphics{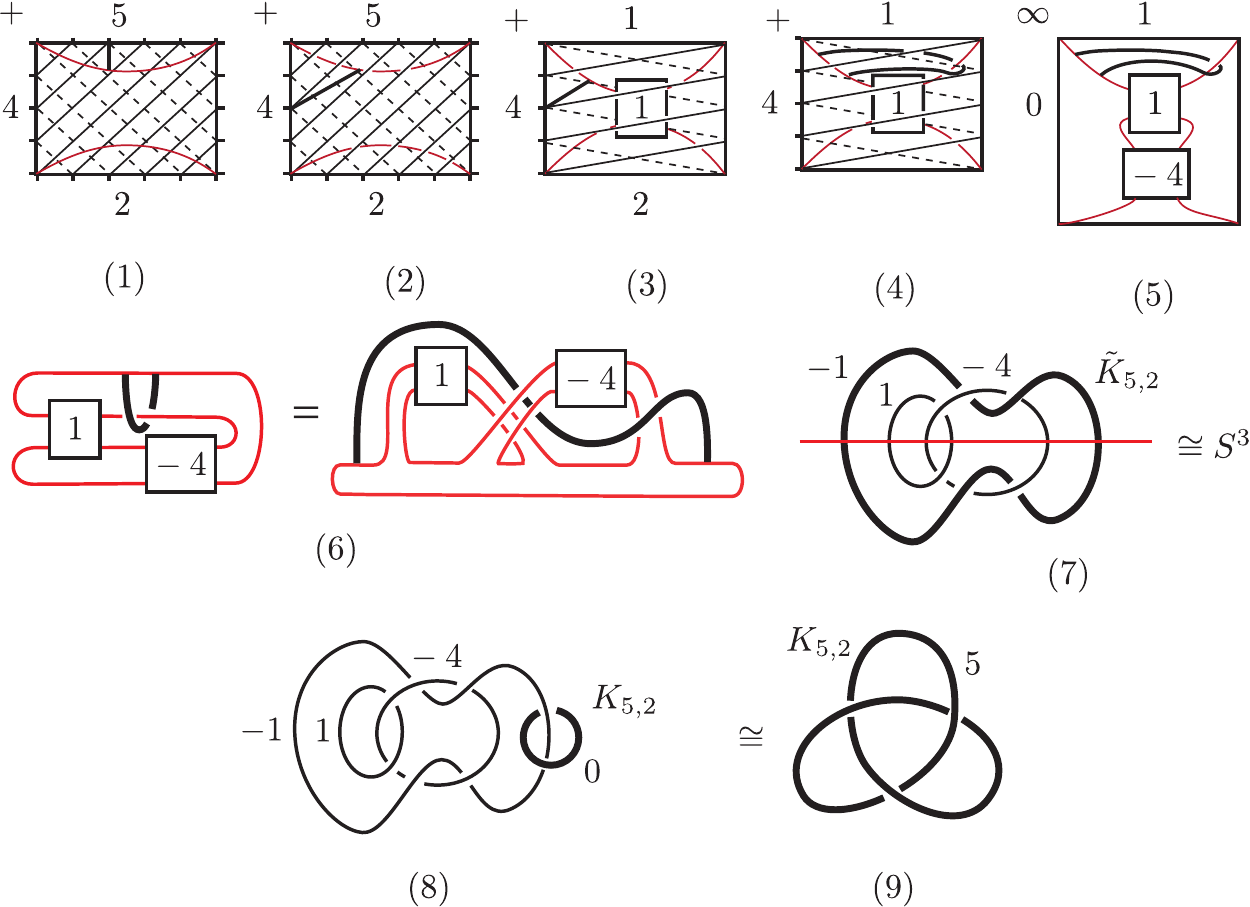}
\caption{Example: The pillowcase method for $(5/4,\gamma,K')$, the double covering along the 2-bridge knot or link(red curve), and the knot diagram of $K_{5,2}$ in $S^3$.}
\label{expi}
\end{center}
\end{figure}

\begin{table}
\begin{tabular}{|c|c|c|c|}\hline
type&$p$&$q$&$a$-sequence\\\hline
$\A_1$&$14\ell^2+7\ell+1$&$7\ell^2+7\ell+1$&$[2,\ell+1,7,-\ell]$\\\hline
$\A_2$&$20\ell^2+15\ell+3$&$5\ell^2+5\ell+1$&$[4,\ell+1,5,-\ell]$\\\hline
B&$30\ell^2+9\ell+1$&$6\ell^2+3\ell$&$[5,\ell+1,3,-\ell+1,2]$\\\hline
$\C_1$&$42\ell^2+23\ell+3$&$7\ell^2+5\ell+1$&$[6,\ell,-3,2,-\ell]$\\\hline
$\C_2$&$42\ell^2+47\ell+13$&$7\ell^2+9\ell+3$&$[6,\ell+1,2,-3,-\ell-1]$\\\hline
$\D_1$&$52\ell^2+15\ell+1$&$13\ell^2+7\ell+1$&$[4,\ell,-3,4,-\ell]$\\\hline
$\D_2$&$52\ell^2+63\ell+19$&$13\ell^2+19\ell+7$&$[4,\ell+1,4,-3,-\ell-1]$\\\hline
$\E_1$&$54\ell^2+15\ell+1$&$27\ell^2+21\ell+3$&$[2,\ell+1,3,2,6,-\ell]$\\\hline
$\E_2$&$54\ell^2+39\ell+7$&$27\ell^2+33\ell+9$&$[2,\ell+1,6,2,3,-\ell]$\\\hline
$\F_1$&$69\ell^2+17\ell+1$&$46\ell^2+19\ell+2$&$[2,2,\ell+1,4,6,-\ell]$\\\hline
$\F_2$&$69\ell^2+29\ell+3$&$46\ell^2+27\ell+4$&$[2,2,\ell+1,6,4,-\ell]$\\\hline
$\G_1$&$85\ell^2+19\ell+1$&$34\ell^2+11\ell+1$&$[3,2,\ell+1,3,6,-\ell]$\\\hline
$\G_2$&$85\ell^2+49\ell+7$&$34\ell^2+23\ell+4$&$[3,2,\ell+1,6,3,-\ell]$\\\hline
$\H_1$&$99\ell^2+35\ell+3$&$22\ell^2+9\ell+1$&$[5,2,\ell+1,3,4,-\ell]$\\\hline
$\H_2$&$99\ell^2+53\ell+7$&$22\ell^2+13\ell+2$&$[5,2,\ell+1,4,3,-\ell]$\\\hline
$\I_1$&$120\ell^2+16\ell+1$&$24\ell^2+8\ell$&      $[5,\ell+1,2,3,2,-\ell+1,3]$\\\hline
$\I_2$&$120\ell^2+20\ell+1$&$40\ell^2+20\ell+1$&$[3,\ell+1,2,6,2,-\ell+1,2]$\\\hline
$\I_3$&$120\ell^2+36\ell+3$&$24\ell^2+12\ell+1$&$[5,\ell+1,2,4,2,-\ell+1,2]$\\\hline
J&$120\ell^2+104\ell+22$&$24\ell^2+16\ell+3$&$[5,-\ell-1,-2,-3,-2,\ell,3]$\\\hline
K&$191$&$34$&$[6,2,-2,-3,-3]$\\\hline
\end{tabular}
\caption{Table of continued fractions of $p/q$ for $L(p,q)$ in {\sc Figure}~\ref{po}.}
\label{continuedpo}
\end{table}
\begin{table}
\begin{tabular}{|c|l|l|}\hline
type&$(p_3,\cdots, p_n,p_{n+1})$&$k$\\\hline
$\A_1$&$(7\ell+1,\ell,-1)$&$7\ell+2$\\\hline
$\A_2$&$(5\ell+1,\ell,-1)$&$5\ell+2$\\\hline
B&$(6\ell-1,2\ell-1,-2,-1)$&$6\ell+1$\\\hline
$\C_1$&$(7\ell+3,5\ell+2,3\ell+1,-1)$&$7\ell+2$\\\hline
$\C_2$&$(7\ell+5,3\ell+2,2\ell+1,\ell,-1)$&$7\ell+4$\\\hline
$\D_1$&$(13\ell+3,9\ell+2,5\ell+1,\ell,-1)$&$13\ell+2$\\\hline
$\D_2$&$(13\ell+9,3\ell+2,2\ell+1,\ell,-1)$&$13\ell+8$\\\hline
$\E_1$&$(27\ell+5,11\ell+2,6\ell+1,\ell,-1)$&$27\ell+4$\\\hline
$\E_2$&$(27\ell+11,5\ell+2,3\ell+1,\ell,-1)$&$27\ell+10$\\\hline
$\F_1$&$(23\ell^2+21\ell+3,23\ell+4,6\ell+1,\ell,-1)$&$23\ell+3$\\\hline
$\F_2$&$(23\ell^2+25\ell+5,23\ell+6,4\ell+1,\ell,-1)$&$23\ell+5$\\\hline
$\G_1$&$(17\ell^2+14\ell+2,17\ell+3,6\ell+1,\ell,-1)$&$17\ell+2$\\\hline
$\G_2$&$(17\ell^2+20\ell+5,17\ell+6,3\ell+1,\ell,-1)$&$17\ell+5$\\\hline
$\H_1$&$(11\ell^2+10\ell+2,11\ell+3,4\ell+1,\ell,-1)$&$11\ell+2$\\\hline
$\H_2$&$(11\ell^2+12\ell+3,11\ell+4,3\ell+1,\ell,-1)$&$11\ell+3$\\\hline
$\I_1$&$(24\ell-1,15\ell-1,6\ell-1,3\ell-2,3,1)$&$12\ell+1$\\\hline
$\I_2$&$(40\ell+2,22\ell+1,4\ell,2\ell-1,2,1)$&$20\ell+2$\\\hline
$\I_3$&$(24\ell+2,14\ell+1,4\ell,2\ell-1,2,1)$&$12\ell+2$\\\hline
J&$(-24\ell-7,15\ell+4,-6\ell-1,3\ell-1,3,1)$&$12\ell+5$\\\hline
K&$(13,-8,3,-1)$&$15$\\\hline
\end{tabular}\\
\begin{tabular}{|c|l|}\hline
type&$b$-sequence\\\hline
\A&$(0,-1,0,1)$\\\hline
\B&$(0,-1,0,1,0)$\\\hline
\C\D\E&$(0,-1,0,0,1)$\\\hline
\F\G\H&$(0,0,1,0,0,-1)$\\\hline
\I\J&$(0,0,1,0,-1,0,0)$\\\hline
\K&$(0,-1,0,-1,1)$\\\hline
\end{tabular}
\caption{Tables of the sequence $(p_3,p_4,\cdots p_{n+1})$ for the continued fractions of $p/q$ for $L(p,q)$ in {\sc Figure}~\ref{po} and $b$-sequences.}
\label{continuedpo2}
\end{table}

\subsection{$K_{p,k}$ in $\Sigma(2,3,5)$.}%%%%%%%%%%%%%%%%%%
\label{235secion}
In this section we concretely give knot diagrams of $\tilde{K}_{p,k}$ and $K_{p,k}$ for $(p,k)$ given in {\sc Table}~\ref{po} according to the pillowcase method (untwisting pillowcase and $K'$) which is explained in the previous subsection.
The simple but concrete example is already illustrated in {\sc Figure}~\ref{expi}.
Here we carry out the positive $\ell$ cases only.
The descriptions of negative $\ell$ cases are similar.
The difference is the direction of the untwisting, hence we omit them.
\subsubsection{To find $\tilde{K}_{p,k}$ in the pillowcase (Appendix 1).}
In Appendix 1 we untwist the pillowcase and $K'$ for $K_{p,k}$
to end up the pillowcase with the slope $0$ or $\infty$ 
according to the $a$-sequence in {\sc Table}~\ref{continuedpo}.
For example, {\sc Figure}~\ref{2bribandA} presents pillowcase method for $\A_1$ type knots.
We omit the writing slope lines (as (1)-(4) in {\sc Figure}~\ref{expi}) and describe $(P/Q,\gamma,K')$ and $h$-sequence only.
The first picture is $((14\ell^2+7\ell+1)/(7\ell^2+7\ell+1),\gamma,K')$ and $7\ell+2$ and
$\gamma$ is the 2-component arcs with the four end points attached at the vertices of pillowcase and $K'$ is the segment from one component of $\gamma$ to a point in $\bar{\alpha}_0$.
We untwist $K'$ and the picture becomes the 2nd picture, and untwist the pillowcase (3rd picture).
By iterating these methods alternatively, we obtain the last picture in {\sc Figure}~\ref{2bribandA}.

In the similar way to this case, we obtain each pillowcase with slope $0$ or $\infty$ according to the $a$-sequence of the type.
\subsubsection{To find $\tilde{K}_{p,k}$ in $L(p,k^2)$ (Appendix 2).}
We move our procedure to Appendix 2.
By using the Montesinos trick, we develop $K'$ from the last picture of Appendix 1 along the 2-bridge knot or link.
Then we obtain a knot $\tilde{K}_{p,k}$ in the chain of unknots.
The framing is uniquely determined so that the surgery can obtain a 
homology sphere.
\subsubsection{To find $K_{p,k}$ in $Y_{p,k}$ (to the first pictures in Appendix 3)}
\label{0framedmeridian}
The last pictures in Appendix 2 are Kirby diagrams of homology sphere $Y_{p,k}$ for {\sc Table}~\ref{po}.
We give a knot diagram of $K_{p,k}$ by the usual way.
By attaching 0-framed meridian to $\tilde{K}_{p,k}$ and trivializing the diagram other than the 0-framed meridian to deform into the diagram of $S(1,(2,1),(3,1),(5,1))$.
Then we obtain a knot in the diagram.
This is the knot diagram $K_{p,k}$ in the Poincar\'e homology sphere $\Sigma(2,3,5)$ ($K_{p,k}$ in Appendix 3).
The knot diagrams are described to extend the underlying space to other homology spheres later on (Section~\ref{extend}).
\subsubsection{To check $\Sigma(2,3,5)_p(K_{p,k})=L(p,k^2)$.}
%$\tilde{K}_{p,k}$ in $L(p,q)$.
To check that the first diagrams in Appendix 3 is $K_{p,k}$ in the case of $Y_{p,k}=\Sigma(2,3,5)$, we can do the Kirby calculus for the Dehn surgery descriptions
along the processes in Appendix 3.
Each last graph in the sequence of figures is a plumbing diagram.
This diagram is defined in \cite{Sav}.
Each of resulting linear plumbing diagrams gives the lens space $L(p,k^2)$.
This means that the surgeries are all positive integral surgeries in $\Sigma(2,3,5)$ by using Lemma~\ref{positivesurgery} and $b$-sequence.
This fact can be also checked by \cite{MT5}.

{\sc Table}~\ref{processtable} presents the index of the figure numbers per each type.
As $b$-sequences in {\sc Table}~\ref{continuedpo2} are indicated, we have only to do a sequence of processes, for six patterns: A, B, CDE, FGH, IJ, and K types.
We call the plumbing diagram containing X type surgery {\it X type plumbing diagram}.

\begin{table}
\begin{tabular}{|c|c|c|c|c|}\hline
Type&Pillowcase&$\tilde{K}_{p,k}$&$K_{p,k}$&Extension\\\hline
A&\ref{2bribandA}&\ref{bandsumA}&\ref{DualA}&\ref{extA}\\\hline
B&\ref{2bribandB}&\ref{bandsumB}&\ref{DualB}&\ref{extB}\\\hline
CDE&\ref{2bribandCDE}&\ref{bandsumCDE}&\ref{DualCD}&\ref{extE}\\\hline
FGH&\ref{2bribandFGH}&\ref{bandsumFGH}&\ref{DualFGH}&\ref{extE}\\\hline
I&\ref{2bribandI}&\ref{bandsumI}&\ref{DualI}&\ref{extI}\\\hline
J&\ref{2bribandJ}&\ref{bandsumJ}&\ref{DualJ}&\ref{extJ}\\\hline
K&\ref{2bribandK}&\ref{bandsumK}&\ref{DualK}&\ref{extK}\\\hline
\end{tabular}
\caption{The figure numbers for the processes to obtain $\tilde{K}_{p,k}$ and $K_{p,k}$.}
\label{processtable}
\end{table}
\subsection{$K_{p,k}$ in $\Sigma(2,3,7)$.}%%%%%%%%%%%%%%%%
\label{237section}
Here we prove Theorem~\ref{237casethm}.
\begin{proof}
The Seifert data of $\Sigma(2,3,7)$ is $-S(1,(2,1),(3,1),(7,1))$.
Thus we can give Kirby diagram of $\Sigma(2,3,7)$ by replacing 
the multiplicity $5$ in $\Sigma(2,3,5)$ with $7$ and reversing the orientation.

By doing the pillowcase method (untwisting pillowcase and $K'$ (in the sense of Section~\ref{pme})) for $\tilde{K}_{p,k}$ in the list of {\sc Table}~\ref{po}, we obtain the pillowcases corresponding to {\sc Figure}~\ref{2bribandA} to \ref{2bribandK}.
These knots are realized as $K_{p,k}$ in $\Sigma(2,3,7)$, by seeing the $b$-sequence.
Appendix 2 and 3 include these families.

Finally, we prove that the surgeries are all positive integral surgeries.
For example, by substituting $(7,3)$ for $(a,b)$ in {\sc Figure}~\ref{bandsumA}, we get the following continued fraction
$$[2,\ell+1,9,-\ell]=(18\ell^2+9\ell+1)/(9\ell^2+9\ell+1),$$
$$(p_2,p_3,p_4,p_5)=(9\ell^2+9\ell+1,9\ell+1,\ell,-1).$$	
The lens space is $L(18\ell^2+9\ell+1,9\ell^2+9\ell+1)=L(18\ell^2+9\ell+1,-(9\ell+2)^2)$.
Here recall that this type is A.
The $b$-sequence is $(0,1,0,-1)$ for the case of $\Sigma(2,3,5)$.
In the present case $(0,-1,0,1)$ should be the $b$-sequence because the orientation is the opposite direction.
Thus the dual class is $k=9\ell+2$.
Since this surgery is performed in $-\Sigma(2,3,7)$, Lemma~\ref{positivesurgery} tells us that this family gives positive integral surgeries.
By doing the same procedure, the families in {\sc Table}~\ref{lens237} are all positive surgeries.
\end{proof}

\section{Lens space surgeries in Brieskorn homology spheres.}
\label{extend}
In Appendix 3, we generalize the lens surgeries to plumbed 3-manifold surgeries in
plumbed homology spheres other than $S^3$ and $\Sigma(2,3,6\pm1)$.
Hence, it is easy to give some lens surgeries in Brieskorn homology spheres
or several plumbed homology spheres.
We give the graph deformation as in {\sc Table}~\ref{graphdeformation}.
Each deformation in the list means that some integral Dehn surgery of a knot in the left plumbing 3-manifold is deformed into the right plumbed 3-manifold.
In this section we compute lens space surgeries over Brieskorn homology spheres.

\subsection{Lens space surgeries in Brieskorn homology spheres derived from the list of {\sc Table}~\ref{po}.}
\label{brieskornsection}
\subsubsection{A type lens space knots.}
\label{Ataipu}
The pillowcase method of an A type dual knot $\tilde{K}_{p,k}$ is basically given in {\sc Figure}~\ref{2bribandA}.
By using double covering of 2-bridge knots or links (the Montesinos trick as in \cite{MW}), we get $\tilde{K}_{p,k}$ in lens spaces as in {\sc Figure}~\ref{bandsumA}.
Here the cases of $(a,b)=(3,5)$ and $(5,3)$ correspond to $\A_1$ and $\A_2$ respectively.
By Montesinos trick for the 2-bridge knot (or link) with arc $K'$ (in the sense of Section~\ref{pme}) we obtain knot $\tilde{K}_{p,k}$ in the lens space {\sc Figure}~\ref{bandsumA}.
By Corollary~\ref{1deter}, the $0$-framing in {\sc Figure}~\ref{bandsumA}
for integral ${\mathbb Z}$HS-surgery is uniquely determined.
Thus, for the slope, we have only to compute the order of $H_1$.
Hence, the knot $K_{p,k}$ is described as in the left picture in {\sc Figure}~\ref{DualA}.
Here we attach 0-framed meridian of $\tilde{K}_{p,k}$ in {\sc Figure}~\ref{bandsumA} and 
move the Kirby diagram other than the meridian to the diagram as in {\sc Figure}~\ref{236n-2}.
This is already explained as the first example in Section~\ref{0framedmeridian}.

These surgeries are generalized to knots in the Seifert manifolds
as the left picture in {\sc Figure}~\ref{acde}.
If these diagrams present homology spheres, then the integer $a$ must be odd number because another multiplicity is $2$.
These diagrams give Brieskorn homology spheres as in {\sc Table}~\ref{s}.
Note that to get the usual orientations of the Brieskorn homology spheres, we must change the orientation of the diagram, if necessary.
Then there are examples of the following Brieskorn homology spheres with A type.
Here $s,s',m$ are positive integers.
As examples satisfying these conditions, we illustrate the below. 
\begin{prop}
\label{we}
There exist double-primitive knots $K_{p,k}$ in the following Brieskorn homology spheres
\begin{itemize}
\item $\Sigma(2,2s+1,2(2s+1)\pm1)$\\
($a=2(2s+1)\pm1$ and $b=(2s+1)/s$).
\item $\Sigma(2,2s\pm1,2am-s)$\\
($a=2s\pm1$. $b=(2am-s)/((a-2)m-s')$, and $s=2s'+1$)
\item $\Sigma(2,2s\pm1,2am+s)$\\
($a=2s\pm1$. $b=(2am+s)/((a-2)m+s')$, and $s=2s'+1$)
\end{itemize}
such that the resulting lens space $L(p,q)$ are 
$$[\beta_r,\cdots, \beta_1-1,\ell+1,a+2,-\ell],$$
where $b=[\beta_1,\cdots ,\beta_r]$.
\end{prop}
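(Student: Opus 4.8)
The plan is to specialize the generalized A type construction recorded in the left picture of {\sc Figure}~\ref{acde}, which exhibits the knot $K_{p,k}$ sitting in a Seifert manifold with one fixed multiplicity-$2$ fiber, one integral multiplicity governed by $a$, and a second multiplicity encoded by the rational $b$ through its continued fraction $b=[\beta_1,\ldots,\beta_r]$. First I would read the Seifert data of this diagram off the plumbing chain and record it in the normalized form of Section~\ref{simple11knot}, obtaining data of the shape $S(1,(2,1),(\,\cdot\,),(a,\,\cdot\,))$ in which $b$ contributes the remaining multiple fiber. For each of the three parameter choices in the statement I would then verify both the pairwise coprimality of the three Seifert multiplicities and the identity $e-\sum_i b_i/a_i=\pm1/(a_1a_2a_3)$, which together certify that the diagram presents an integral homology sphere and pin down which Brieskorn sphere it is.

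For the first family I would set $a=2(2s+1)\pm1$ and $b=(2s+1)/s$; since $s$ and $2s+1$ are coprime and the numerator of $b$ is $2s+1$, the middle fiber has multiplicity $2s+1$, so the diagram presents $\Sigma(2,2s+1,2(2s+1)\pm1)$, as the computation $\tfrac12-\tfrac{s}{2s+1}-\tfrac1{2(2s+1)\pm1}=\pm1/(a_1a_2a_3)$ confirms. The case $s=1$, plus sign ($a=7$, $b=[3]$) recovers the $\Sigma(2,3,7)$ computation in the proof of Theorem~\ref{237casethm}, and $s=1$, minus sign ($a=5$) recovers the $\A_1$ surgery in $\Sigma(2,3,5)$; these serve as sanity checks. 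For the second and third families I would substitute $a=2s\pm1$ and the two-parameter rational $b$ indicated (with $s=2s'+1$ odd), and confirm that the numerator of $b$ equals the third multiplicity ($2am-s$ or $2am+s$ respectively) and that pairwise coprimality holds under the stated side conditions.

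Next I would compute the resulting lens space. The pillowcase method converts the diagram into a linear plumbing whose weights are read directly as a continued fraction, and substituting the integral block coming from $a$ and the reversed $b$-block $[\beta_r,\ldots,\beta_1]$ into the A type template of {\sc Table}~\ref{continuedpo} produces
$$p/q=[\beta_r,\ldots,\beta_1-1,\ell+1,a+2,-\ell],$$
where the $-1$ correction on $\beta_1$ is the effect of the band joining the $b$-block to the rest of the chain. Finally, to see that these are the positive integral surgeries yielding $L(p,k^2)$, I would invoke Corollary~\ref{positivesurgery} together with the A type $b$-sequence of {\sc Table}~\ref{continuedpo2} (with signs adjusted to the orientation of the diagram, exactly as in the proof of Theorem~\ref{237casethm}); this fixes the dual class $k$ and forces the slope $p$ to be positive.

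The step I expect to be the main obstacle is the Seifert-data bookkeeping in the second and third families: keeping track of the orientation (the usual Brieskorn orientation typically forces a reversal of the diagram) while verifying that the two-parameter rational $b$ both yields the advertised third multiplicity and keeps $(2,2s\pm1,2am\mp s)$ pairwise coprime under the parity constraint $s=2s'+1$. By contrast, once the A type template is fixed the continued-fraction identity is a routine, if lengthy, unwinding, and the positivity of the slope follows formally from Corollary~\ref{positivesurgery}.
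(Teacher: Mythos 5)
Your proposal is correct and takes essentially the same route as the paper: the paper's proof consists precisely of substituting the indicated values of $a,b$ into the extended A type surgery diagram of Appendix 3 ({\sc Figure}~\ref{DualA}, equivalently the left plumbing of {\sc Figure}~\ref{acde}), changing orientation where necessary, and then identifying the dual class by matching the A type $b$-sequence of {\sc Table}~\ref{continuedpo2} and using Corollary~\ref{positivesurgery} for positivity of the slope --- exactly your steps, only stated more tersely, with the paper carrying out the continued-fraction/$b$-sequence computation explicitly for the first family (plus sign). Your $s=1$ sanity checks are sound in substance; just note that the paper's own $\A_1$/$\A_2$ labels for these specializations are internally inconsistent (Section~\ref{Ataipu} versus the continued fractions in {\sc Table}~\ref{continuedpo}), so do not be alarmed if the type names fail to match.
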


\begin{proof}
That such a Brieskorn homology sphere includes $K_{p,k}$ is obtained by substituting the indicated values for the parameters $a,b$
in the extended Dehn surgery in {\sc Figure}~\ref{DualA} in Appendix 3.
Here note that one must change the orientation if necessary.
\end{proof}
Here we illustrate the way to give the dual class for the example.
The $b$-sequence of the lens surgery corresponds to the extension of the sequence of A type knot in {\sc Table}~\ref{continuedpo2}.
Therefore, such a knot is realized as $K_{p,k}$.

Here we compute the first example as above with the plus case only:
$$p/q=[s,-1,-\ell-1,-2(2s+1)-3,\ell]=((4s^2+9s+5)\ell^2+(4s+5)\ell+1)/(4s+5)\ell^2$$
$$L(p,q)=L((4s^2+9s+5)\ell^2+(4s+5)\ell+1,((4s+5)\ell+2)^2)$$
$$(p_2,p_3,p_4,p_5,p_6)=((4s+5)\ell^2,-(4s+5)\ell^2-(4s+5)\ell-1,(4s+5)\ell+1,\ell,-1).$$
Thus the $b$-sequence is
$$(b_1,b_2,b_3,b_4,b_5)=(0,0,1,0,-1).$$
Thus the dual class is $(4s+t)\ell+2$.
This means that these $p$-surgeries over $\Sigma(2,2s+1,2(2s+1)+1)$ give lens spaces as above.
Doing the pillowcase method along these data, we can obtain the straight extension of {\sc Figure}~\ref{2bribandA}.

\begin{rem}
The method of this proof of Proposition~\ref{we} is available for other propositions in this section. 
If we give no proof to them, it is regarded as omitting the proof.
\end{rem}
\begin{figure}[htpb]
\begin{center}
\includegraphics{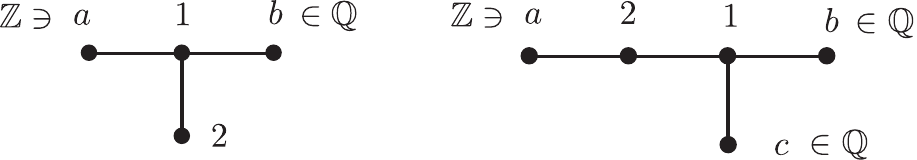}
\caption{A and B type plumbing diagrams.}
\label{acde}
\end{center}
\end{figure}
{\bf Proof of Proposition~\ref{Joshua} in the case of lens space knot in $\Sigma(2,5,7)$ with type A.}
The Seifert data of $\Sigma(2,5,7)$ is $-S(1,(2,1),(5,1),(7,2))$.
We suppose that $B=[3,-2]$ and $a=5$ and $A=\emptyset$ on A type surgery in {\sc Figure}~\ref{graphdeformation}.
Then we can obtain a family of lens spaces with continued fraction:
$$[-2,3,1,\ell+2,7,-\ell]=-\frac{35\ell^2+21\ell+3}{14\ell^2+7\ell+1}$$
$$L(35\ell^2+21\ell+3,14\ell^2+7\ell+1)=L(35\ell^2+21\ell+3,(7\ell+2)^2)$$
$$(p_2,p_3,p_4,p_5)=(14\ell^2+7\ell+1,7\ell^2,-7\ell-1,\ell,1)$$
and $b$-sequence is $(0,0,-1,0,1)$.
Thus this is an A type surgery.
From Corollary~\ref{positivesurgery} the surgeries are all positive integral surgeries on $\Sigma(2,5,7)$.
\hfill$\Box$\\
Specially, the case of $\ell=-1$ is the lens space surgery $\Sigma(2,5,7)_{17}(K)=L(17,15)$, which is appeared in Section~\ref{b2=1examplesection}.
\subsubsection{B type lens space knots.}
\label{Btaipu}
The pillowcase method and the double branched covering for B type dual knot $\tilde{K}_{p,k}$ are given in {\sc Figure}~\ref{2bribandB} and \ref{bandsumB}.
Then B type plumbing diagram is the left in {\sc Figure}~\ref{acde}.
The B type Seifert data up to orientation is 
$$S(1,(2a-1,a),(b_1,b_2),(c_1,c_2)).$$
For example, the following Brieskorn homology spheres are included.
Here $s,n$ are positive integers and $b=b_1/b_2$ and $c=c_1/c_2$ are rational numbers.
\begin{prop}[B type lens space surgery]
There exist double-primitive knots $K_{p,k}$ in the following Brieskorn homology spheres
\begin{itemize}
\item $\Sigma(2,2s+1,2(2s+1)n\pm1)$  ($a=-s$, $b=2$, and $c=2(2s+1)\pm1/n$)
\item $\Sigma(3,12s-8,18s-13)$ ($a=-9s+7$, $b=3$ and $c=(12s-8)/(2s-1)$)
\item $\Sigma(3,12s-8,18s-11)$ ($a=-9s+6$, $b=3$ and $c=(12s-8)/(2s-1)$)
\item $\Sigma(3,6s-1,18s-5)$ ($a=-9s+3$, $b=3$ and $c=(6s-1)/s$)
\item $\Sigma(3,6s-1,18s-1)$ ($a=-9s+1$, $b=3$ and $c=(6s-1)/s$)
%\item $\Sigma(3,5,15s\pm4)$ ($a=-2$, $b=3$ and $c=(15s\pm4)/(4s\pm1)$)
%\item $\Sigma(3,11,33s\pm14)$ ($a=-5$, $b=3$ and $c=(33s\pm14)/(7s\pm3)$)
%\item $\Sigma(3,17,51s\pm5)$ ($a=-8$, $b=3$ and $c=(51s\pm5)/(10s\pm1)$)
%
\item $\Sigma(3,6n-1,3(6n-1)s\pm x)$\\
($a=-3n+1$, $b=3$, and $c=(3(6n-1)s\pm x)/((3n+1)s\pm y)$, where $x,y$ are some integers satisfying $(3n+1)x-3(6n-1)y=\pm1$).
%\item $\Sigma(3,7,21s\pm4)$ ($a=-3$, $b=3$ and $c=(21s\pm4)/(5s\pm1)$)
%\item $\Sigma(3,13,39s\pm5)$ ($a=-6$, $b=3$ and $c=(39s\pm5)/(8s\pm1)$)
%\item $\Sigma(3,19,57s\pm26)$ ($a=-9$, $b=3$ and $c=(59s\pm5)/(11s\pm5)$)
\item $\Sigma(3,6n+1,3(6n+1)s\pm x)$\\
($a=-3n$, $b=3$, and $c=(3(6n+1)s\pm x)/((3n+2)s\pm y)$, where $x,y$ are some integers satisfying $(3n+2)x-3(6n+1)y=\pm1$).
%
%\item $\Sigma(3,23,69s\pm16)$ ($a=-11$, $b=3$ and $c=(69s\pm16)/(13s\pm3)$)
%\item $\Sigma(3,41,123s\pm28)$ ($a=-20$, $b=3$ and $c=(123s\pm28)/(22s\pm5)$)
%\item $\Sigma(4,4s+3,8s+5)$ ($a=-4s-2$, $b=4$, and $c=(4s+3)/(s+1)$)
%\item $\Sigma(4,4s+3,8s+7)$ $(a=-4t-3$, $b=4$, and $c=(4s+3)/(s+1)$)
%\item $\Sigma(5,10s+3,50s+13)$ $(a=-25s-6$, $b=5$, and $c=(10s+3)/(3s+1)$)
%\item $\Sigma(5,10s+3,50s+17)$ $(a=-25s-8$, $b=5$, and $c=(10s+3)/(3s+1)$)
%\item $\Sigma(5,20s-4,50s-9)$ $(a=-25s+5$, $b=5$, and $c=(20s-4)/(6s-1)$)
%\item $\Sigma(5,20s-4,50s-11)$ $(a=-25s+6$, $b=5$, and $c=(20s-4)/(6s-1)$)
\end{itemize}
such that the resulting lens spaces $L(p,q)$ are of the form
$$p/q=[\beta_r,\cdots,\beta_1,\ell,a,2,-\ell+1,\gamma_1,\cdots,\gamma_s],$$
where $b_1/b_2=[\beta_1,\cdots, \beta_r]$ and $c_1/c_2=[\gamma_1,\cdots, \gamma_s]$.
%\begin{figure}[htpb]
%\begin{center}
%\includegraphics{BZS-eps-converted-to.pdf}
%\caption{B type plumbing diagram.}
%\label{tbde}
%\end{center}
%\end{figure}
\end{prop}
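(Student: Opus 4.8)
The plan is to follow verbatim the template of the proof of Proposition~\ref{we}, since the Remark following that proposition authorizes exactly this reuse for the other propositions of the present section. First I would recall the general B type construction: the left picture in {\sc Figure}~\ref{acde} presents the dual knot $K_{p,k}$ as a knot in the Seifert manifold with data $S(1,(2a-1,a),(b_1,b_2),(c_1,c_2))$, obtained from the $\Sigma(2,3,5)$ diagram of {\sc Figure}~\ref{DualB} by replacing the two fibers carrying the $(3,1)$ and $(5,1)$ multiplicities with the free rational multiplicities $b=b_1/b_2$ and $c=c_1/c_2$, while the $(2a-1,a)$ block records the integer parameter $a$. By Corollary~\ref{1deter} the homology-sphere framing of $\tilde K_{p,k}$ is unique, so the surgery parameter is well defined throughout.

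The core of the argument is then a case-by-case substitution. For each Brieskorn homology sphere in the list I would plug in the indicated values of $a$, $b$, $c$ and check two independent things. First, that the resulting Seifert data presents the claimed $\Sigma(\cdot,\cdot,\cdot)$ up to orientation: one verifies that $|2a-1|$, $b_1$, $c_1$ are the three stated multiplicities, that they are pairwise coprime, and that the defining identity $e-\sum_i b_i/a_i=\pm1/(a_1a_2a_3)$ holds. For instance, in the first family $a=-s$ gives $|2a-1|=2s+1$, $b=2$ gives $(b_1,b_2)=(2,1)$, and $c=(2(2s+1)n\pm1)/n$ gives $c_1=2(2s+1)n\pm1$, reproducing $\Sigma(2,2s+1,2(2s+1)n\pm1)$ with Seifert data $\mp S(1,(2,1),(2s+1,s),(2(2s+1)n\pm1,n))$. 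Second, that reading off the continued fraction of the resulting linear plumbing yields $p/q=[\beta_r,\cdots,\beta_1,\ell,a,2,-\ell+1,\gamma_1,\cdots,\gamma_s]$, where $b_1/b_2=[\beta_1,\cdots,\beta_r]$ and $c_1/c_2=[\gamma_1,\cdots,\gamma_s]$; this is exactly the concatenation produced by splicing the expansions of the two multiplicities around the central B type block $[\ell,a,2,-\ell+1]$. To certify that the knot is genuinely $K_{p,k}$ with the correct dual class, I would compute the $b$-sequence of this continued fraction and match it against the B type $b$-sequence $(0,-1,0,1,0)$ of {\sc Table}~\ref{continuedpo2}; via $k=\sum_i b_i(-1)^{i-1}p_{i+1}$ this pins down $k$ and identifies the knot as double-primitive, just as in the $\Sigma(2,3,7)$ computation of Theorem~\ref{237casethm}.

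Positivity is the last step. By Corollary~\ref{positivesurgery} the lens surgery parameter satisfies $q\equiv k^2\bmod p$, so once the orientation of the diagram is chosen so that the $b$-sequence carries the sign appropriate to a positive surgery, the surgery is a positive integral surgery yielding $L(p,k^2)$. Here I would use, exactly as in the A type and $\Sigma(2,3,7)$ arguments, the fact that reversing the orientation of the plumbing flips the $b$-sequence as a vector; this is the content of the standing caveat that one must change orientation if necessary.

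The main obstacle will be purely computational: carrying out the continued-fraction bookkeeping uniformly across the two-parameter families, in particular $\Sigma(3,6n-1,3(6n-1)s\pm x)$ and $\Sigma(3,6n+1,3(6n+1)s\pm x)$, where $c_1/c_2$ is not a fixed rational but depends on auxiliary integers $x,y$ constrained by $(3n+1)x-3(6n-1)y=\pm1$ (respectively $(3n+2)x-3(6n+1)y=\pm1$). For these one must confirm that the chosen pair $(c_1,c_2)$ is coprime, that the three multiplicities remain pairwise coprime, that the $\pm1$ Euler-number identity still holds, and that the expansion $c_1/c_2=[\gamma_1,\cdots,\gamma_s]$ splices correctly into the stated form. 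This is where the only genuine care is required; the remaining single-parameter families reduce to routine substitutions of the same kind already performed for type A.
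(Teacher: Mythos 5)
Your proposal is correct and follows essentially the same route as the paper: the paper formally omits this proof (per the Remark after Proposition~\ref{we}, its proof is "regarded as omitting"), and the intended argument is exactly what you describe --- substitute the indicated $a$, $b=b_1/b_2$, $c=c_1/c_2$ into the extended B type surgery diagram with Seifert data $S(1,(2a-1,a),(b_1,b_2),(c_1,c_2))$, read off the continued fraction of the resulting linear plumbing, match the $b$-sequence $(0,-1,0,1,0)$ from {\sc Table}~\ref{continuedpo2} to identify the dual class, and invoke Corollary~\ref{positivesurgery} together with an orientation change where needed, just as in the proofs of Proposition~\ref{we} and Theorem~\ref{237casethm}.
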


We list examples for a negative integer $a$ and a rational $b$ with $b\le 3$ above, while one can also find B type 
lens space knots with positive $a$ or higher $b$.
The family of B type with $a=-1$ is symmetric if one exchanges the roles of $b$ and $c$.
Thus we could find a pair of two families of type B. 
If $a$ is not $-1$, then we can find two families of type B lens space knots.
For example, deal with the first example in the list above.
Then we can find two families $\B_1,\B_2$ as below.
Below, we describe the resulting lens spaces obtained by type $\B_1$ and $\B_2$.
\begin{figure}[htbp]
\begin{center}
\includegraphics[width=.9\textwidth]{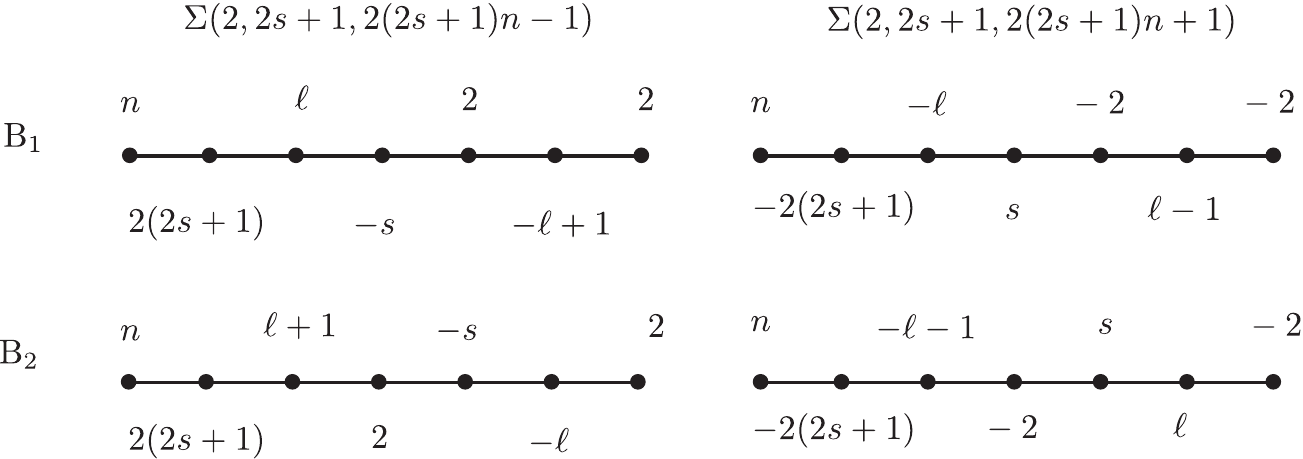}
\caption{$\B_1$: $a=-s$, $b=2(2s+1)\pm1/n$, $c=2$; $\B_2$: $a=-s$, $b=2$ and $c=2(2s+1)\pm1/n$ in $\Sigma(2,2s+1,2(2s+1)n\pm1)$.}
\label{ntb}
\end{center}
\end{figure}
We state the result here.
\begin{prop}[B type lens space surgery]
There exist double-primitive knots $B_{p,k}$ in the Brieskorn homology spheres $\Sigma(2,2s+1,2(2s+1)n\pm1)$
with the following coefficients $a=-s$ and, $b,c$ in {\sc Figure}~\ref{acde}.
\begin{itemize}
\item ($\B_1$ type)
\begin{itemize}
\item $\Sigma(2,2s+1,2(2s+1)n-1)$\ \ ($b=2(2s+1)-1/n$, and $c=2$)
\item $\Sigma(2,2s+1,2(2s+1)n+1)$\ \ ($b=2(2s+1)+1/n$, and $c=2$)
\end{itemize}
\item ($\B_2$ type)
\begin{itemize}
\item $\Sigma(2,2s+1,2(2s+1)n-1)$\ \ ($b=2$, and $c=2(2s+1)-1/n$)
\item $\Sigma(2,2s+1,2(2s+1)n+1)$\ \ ($b=2$, and $c=2(2s+1)+1/n$)
\end{itemize}
\end{itemize}
such that the resulting lens spaces $L(p,q)$ is reprsented in {\sc Figure}~\ref{ntb}.
\end{prop}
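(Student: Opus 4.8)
The plan is to follow the proof scheme of Proposition~\ref{we} together with the remark that follows it, specializing the general B type construction to the two prescribed choices of coefficients. I start from the B type plumbing diagram, namely the left picture in {\sc Figure}~\ref{acde}, which carries the Seifert datum $S(1,(2a-1,a),(b_1,b_2),(c_1,c_2))$ together with the dual knot $\tilde{K}_{p,k}$ produced by the pillowcase method of {\sc Figure}~\ref{2bribandB} and {\sc Figure}~\ref{bandsumB}. Into this diagram I substitute $a=-s$ and the two prescribed pairs: $(b,c)=(2(2s+1)\pm1/n,\,2)$ for the $\B_1$ family and $(b,c)=(2,\,2(2s+1)\pm1/n)$ for the $\B_2$ family.

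First I would identify the underlying $3$-manifold. With $a=-s$ the first multiplicity $2a-1$ becomes $2s+1$ up to sign with the pair $(2s+1,s)$, and computing the remaining Seifert data for each substitution I would check that the result agrees, up to orientation, with $\mp S(1,(2,1),(2s+1,s),(2(2s+1)n\pm1,n))$, the datum recorded in Section~2.2 for $\Sigma(2,2s+1,2(2s+1)n\pm1)$. Since $2s+1$ and $2(2s+1)n\pm1$ are odd and the last is $\pm1$ modulo each of the first two multiplicities, the three multiplicities are pairwise coprime, so the plumbed manifold really is this Brieskorn homology sphere. By Corollary~\ref{1deter} the $0$-framing for the integral ${\mathbb Z}$HS surgery is forced, so no free framing parameter remains.

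Next, following the Kirby calculus of {\sc Figure}~\ref{DualB}, I would collapse the surgered diagram to a linear plumbing graph and read off the continued fraction $p/q=[\beta_r,\cdots,\beta_1,\ell,a,2,-\ell+1,\gamma_1,\cdots,\gamma_s]$ with $b_1/b_2=[\beta_1,\cdots,\beta_r]$ and $c_1/c_2=[\gamma_1,\cdots,\gamma_s]$, yielding the quadratic family of slopes $p$ and the value $q$. I would then compute the associated $b$-sequence and match it against the extension of the B type entry $(0,-1,0,1,0)$ in {\sc Table}~\ref{continuedpo2}; this certifies the knot as a genuine double-primitive $K_{p,k}$ and pins down the dual class via $k=\sum_i b_i(-1)^{i-1}p_{i+1}$. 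Finally Corollary~\ref{positivesurgery} confirms $k^2\equiv q\bmod p$ and that the surgeries are positive and integral, and the same computation with $b$ and $c$ interchanged produces the $\B_2$ lens spaces of {\sc Figure}~\ref{ntb}.

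The main obstacle I anticipate is bookkeeping rather than conceptual: correctly carrying the orientation (the $\mp$ sign) through the reduction so that both $L(p,q)$ and the dual class emerge with the right sign, and verifying that the two substitutions genuinely give inequivalent families when $s\neq 1$ while coinciding when $a=-1$, i.e. $s=1$, where the diagram is symmetric in $b$ and $c$. The closed-form continued-fraction arithmetic expressing $p/q$ and confirming the $b$-sequence is routine but must be executed separately for the plus and the minus cases.
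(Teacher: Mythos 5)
Your proposal is correct and takes essentially the same route as the paper: for this proposition the paper omits the argument (the remark after Proposition~\ref{we} declares that the method of that proof applies to all later propositions in the section), and your plan --- substituting $a=-s$ and the prescribed $b,c$ into the B type diagram, matching the resulting Seifert datum with $\mp S(1,(2,1),(2s+1,s),(2(2s+1)n\pm1,n))$, reading off $p/q$ by the Kirby calculus of {\sc Figure}~\ref{DualB}, identifying the dual class via the extended B type $b$-sequence $(0,-1,0,1,0)$ of {\sc Table}~\ref{continuedpo2}, and invoking Corollaries~\ref{1deter} and~\ref{positivesurgery} --- is exactly that method. Your added checks (pairwise coprimality of the multiplicities, orientation bookkeeping, and the $b\leftrightarrow c$ symmetry occurring precisely at $a=-1$, i.e.\ $s=1$) are consistent with the paper's own remarks and fill in details it leaves implicit.
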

\subsubsection{CDE type lens space knots.}
\label{CDEtaipu}
Next, consider CDE type lens space knots in Brieskorn homology spheres.
The pillowcase and double branched covering methods are in {\sc Figure}~\ref{2bribandCDE} and \ref{bandsumCDE} respectively.
In the deformation of CDE type in {\sc Figure}~\ref{graphdeformation},
we assume that the $A,B,C,D$ are all single chains of unknots.
Then homology spheres of these types are of form of the left of {\sc Figure}~\ref{tcde}
for and $a,b,d,s\in {\mathbb Q}$.
\begin{figure}[htpb]
\begin{center}
\includegraphics{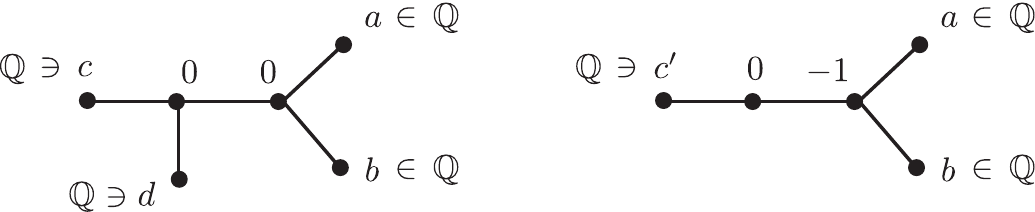}
\caption{CDE type and FGH type plumbing diagrams.}
\label{tcde}
\end{center}
\end{figure}

For example, the plumbing diagrams of the cases of $\{a,b\}=\{1,2\}$ and $(c,d)=(5,2)$ in {\sc Figure}~\ref{graphdeformation} are $\Sigma(2,3,5)$ and the deformations are C type lens space knots in {\sc Table}~\ref{po}.

We assume that $a=1$ or $b=1$ and $d=m/(m-1)$ for an integer $m$ with $m\neq 1$.
Furthermore, the following cases produce Brieskorn homology spheres.
\begin{center}
[1] $a=1$\hspace{.7cm}  [2] $b=1$\hspace{.7cm}  [3] $c=\pm1$
\end{center}
Consider the cases [1] or [2].
The Seifert data are $S(1,(m,m-1),(b+1,1),(c_1,c_2))$ and $S(1,(m,m-1),(a+1,1),(c_1,c_2))$ respectively
where $c=c_1/c_2$.
In the case of $m=2$, we obtain CD type in {\sc Table}~\ref{po}.
For example, $\Sigma(2,5,7)$ ($m=2$, $b+1=5$, and $c=7/2$) or $\Sigma(3,10,13)$ ($m=3$, $b+1=10$ and $c=13/3$).
These surgeries are called CD type.
Furthermore, $\C_1\D_1$ type means the case of $a=1$ and $\C_2\D_2$ type means the case of $b=1$.

Consider the case of [3].
If $c=-1$ holds, then the Seifert data is $S(-1,(a,1),(b,1),(1,m-1))$.
The possible case is $S^3$ only.
In the case of $c=1$, the Seifert data is $S(1,(2m-1,m-1),(b,1),(a,1))$.
For example, $\Sigma(3,4,5)$ ($m=3$, $a=3$, $b=4$) $\Sigma(4,5,9)$ ($m=-4$, $a=4$, and $b=5$).
These cases are called $\E$ type.
As a result we obtain the following proposition:
\begin{prop}[CDE type lens space surgery]
The Brieskorn homology spheres with the following Seifert data
\begin{enumerate}
\item[(i)] $S(1,(m,m-1),(b+1,1),(c_1,c_2))$  ($\C_1\D_1$ type)
\item[(ii)] $S(1,(m,m-1),(a+1,1),(c_1,c_2))$  ($\C_2\D_2$ type)
\item[(iii)] $S(1,(2m-1,m-1),(b,1),(a,1))$ (E type)
\end{enumerate}
contain $K_{p,k}$ and the resulting lens spaces $L(p,q)$ satisfy
$$p/q=[\gamma_n,\gamma_{n-1},\cdots, \gamma_1,\ell,-m-1,b,-\ell],$$
$$p/q=[\gamma_n,\gamma_{n-1},\cdots, \gamma_1,\ell+1,a,-m-1,-\ell-1].$$
$$p/q=[2,\ell+1,a,-m,b-\ell]$$
respectively.
In the case of (i) or (ii) the continued fraction $[\gamma_1,\gamma_2,\cdots,\gamma_n]$ presents $(c_1+c_2)/c_2$.
\end{prop}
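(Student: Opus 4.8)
The plan is to mirror the argument already used for the A and B types (Proposition~\ref{we} and the remark following it), specializing the general CDE-type picture rather than rebuilding it from scratch. The starting data are the pillowcase description of a CDE-type knot $\tilde{K}_{p,k}$ in {\sc Figure}~\ref{2bribandCDE} and its double-branched-cover form in {\sc Figure}~\ref{bandsumCDE}; the graph deformation of {\sc Table}~\ref{graphdeformation} then places the dual knot $K_{p,k}$ inside the general plumbed homology sphere drawn on the left of {\sc Figure}~\ref{tcde}, whose rational labels are $a,b,c,d$. I would treat the three assertions (i)--(iii) uniformly as three specializations of these labels and, in each case, verify that the surgery is a positive integral lens-space surgery by the continued-fraction/$b$-sequence bookkeeping together with Corollary~\ref{positivesurgery}.

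First I would set $d=m/(m-1)$, so that this vertex contributes the multiplicity $(m,m-1)$ (and, after an orientation change where needed, the sign of the Seifert invariant comes out as stated). For case (i) I set $a=1$, for case (ii) $b=1$, and for case (iii) $c=\pm1$. In cases (i) and (ii) the $c$-chain survives as a genuine multiplicity $(c_1,c_2)$, so the Seifert data read off the plumbing are $S(1,(m,m-1),(b+1,1),(c_1,c_2))$ and $S(1,(m,m-1),(a+1,1),(c_1,c_2))$ respectively; in case (iii) setting $c=1$ collapses one edge and fuses the $(m,m-1)$ vertex with its neighbour into the single multiplicity $(2m-1,m-1)$, giving the E-type data $S(1,(2m-1,m-1),(b,1),(a,1))$ with no prepended $c$-block. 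Reading the knot through the deformation, the pillowcase slope is computed from the continued fraction obtained by reading the chain off the diagram, which I expect to yield the core blocks $[\ell,-m-1,b,-\ell]$, $[\ell+1,a,-m-1,-\ell-1]$ and $[2,\ell+1,a,-m,b-\ell]$ in the three cases.

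To finish each case I would identify the dual class and confirm positivity. The CDE-type $b$-sequence in {\sc Table}~\ref{continuedpo2} is $(0,-1,0,0,1)$, and prepending the $\gamma$-block coming from the $c$-chain contributes only zero entries, so the formula $k=\sum_i b_i(-1)^{i-1}p_{i+1}$ still extracts the same combination of the $p_{i+1}$ and identifies $k$. Computing $p$ and $q$ from the continued fraction via $p_i=a_ip_{i+1}-p_{i+2}$ and comparing $q$ with $k^2$ modulo $p$ verifies, through Corollary~\ref{positivesurgery}, that the surgery is a positive integral lens-space surgery $L(p,q)=L(p,k^2)$; hence $K_{p,k}$ is a double-primitive knot in the indicated Brieskorn homology sphere.

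The step I expect to be the main obstacle is the junction bookkeeping between the $c$-chain and the twist region, namely proving that the continued fraction which must be prepended is $(c_1+c_2)/c_2=[\gamma_1,\dots,\gamma_n]$ rather than $c_1/c_2$. This $+c_2$ shift comes from absorbing the framing of the unknot where the $c$-chain meets the $\ell$-twist block, and it is exactly the place where a careless continued-fraction concatenation would give the wrong $p/q$. I would pin it down with a local continued-fraction identity at that vertex and then sanity-check the closed formulas against the listed small examples---$\Sigma(2,5,7)$ and $\Sigma(3,10,13)$ for the CD cases, and $\Sigma(3,4,5)$ and $\Sigma(4,5,9)$ for the E case---so that the signs and the $(c_1+c_2)/c_2$ normalization are fixed unambiguously.
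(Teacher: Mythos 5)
Your proposal follows essentially the same route as the paper: specialize the general CDE-type plumbing (with $d=m/(m-1)$, then $a=1$, $b=1$, or $c=1$) to read off the Seifert data, extract the continued fractions from the graph deformation, identify the dual class via the CDE $b$-sequence $(0,-1,0,0,1)$ with prepended zeros, and confirm positivity through Corollary~\ref{positivesurgery}; this is exactly the method of Proposition~\ref{we} that the paper's remark declares applicable here, and your $(c_1+c_2)/c_2$ normalization check matches the paper's own computation in the proof of Proposition~\ref{Joshua} (e.g.\ $7/2+1=9/2=[5,2]$ for $\Sigma(2,5,7)$). No substantive difference in approach.
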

Furthermore we can give other examples.
\begin{prop}[CDE type lens space surgery]
There exist double-primitive knots $K_{p,k}$ in the following Brieskorn homology spheres.
\begin{itemize}
\item[(iv)] $\Sigma(2,3,6n\pm 1)$ ($m=2$, $\{a,b\}=\{1,3\}$, and $c_1/c_2=6\pm 1/n$.)  (C type)
\item[(v)] $\Sigma(2,2s+1,2(2s+1)\pm1)$ 
\begin{itemize}
\item ($m=2$ $\{a,b\}=\{1,2(2s+1)\pm1-1\}$ and $c=(2s+1)/s$) (D type)
\item ($m=s+1$, $\{a,b\}=\{2,2(2s+1)\pm1\}$  and $c=1$ (E type)
\end{itemize}
\end{itemize}
\end{prop}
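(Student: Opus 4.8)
The plan is to obtain each family by specializing the general CDE-type construction, exactly as the Remark following Proposition~\ref{we} prescribes. All of the topological content is already in place: the pillowcase method and the Montesinos/double-branched-cover step for CDE knots are carried out in {\sc Figure}~\ref{2bribandCDE} and \ref{bandsumCDE}, the resulting plumbed homology spheres are those on the left of {\sc Figure}~\ref{tcde}, and the preceding proposition records, for each of the three subtypes, the continued fraction of the resulting lens space,
$$[\gamma_n,\dots,\gamma_1,\ell,-m-1,b,-\ell],\quad [\gamma_n,\dots,\gamma_1,\ell+1,a,-m-1,-\ell-1],\quad [2,\ell+1,a,-m,b,-\ell],$$
with $[\gamma_1,\dots,\gamma_n]=(c_1+c_2)/c_2$. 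Hence the whole task reduces to substituting the listed values of $m$, $\{a,b\}$ and $c_1/c_2$ and then checking two independent things for each family: that the dual knot is a $K_{p,k}$, and that the plumbed manifold is the advertised Brieskorn homology sphere.

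For the first check I would, in each case, expand the continued fraction to read off the tail $(p_2,p_3,\dots,p_{n+1})$ and then compute the dual class $k=\sum_i b_i(-1)^{i-1}p_{i+1}$ from the CDE $b$-sequence $(0,-1,0,0,1)$ of {\sc Table}~\ref{continuedpo2}, padded by zeros on the $\gamma$-entries coming from the $c$-fiber chain. Agreement of this $k$ with the value named in the statement certifies that the surface-slope surgery is $\tilde K_{p,k}$, so its dual is the double-primitive $K_{p,k}$; Corollary~\ref{positivesurgery}, together with the sign data of the $b$-sequence after any necessary orientation reversal, then shows that the slope is a positive integer and that the surgered manifold is $L(p,k^2)$.

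For the second check I would identify the Seifert data of each specialized plumbing. The cleanest case is the E family of (v): taking $c=1$ collapses the $C$-chain so that the data become $S(1,(2m-1,m-1),(b,1),(a,1))$, and with $m=s+1$, $a=2$, $b=2(2s+1)\pm1$ the multiplicities are $2s+1$, $2(2s+1)\pm1$ and $2$, which are pairwise coprime, giving $\Sigma(2,2s+1,2(2s+1)\pm1)$; one verifies $e-\sum_i b_i/a_i=\pm1/(a_1a_2a_3)$ directly. The D family of (v) is handled the same way: with $m=2$, $a=1$, $b=2(2s+1)\pm1-1$ and $c=(2s+1)/s$ the data reduce to $S(1,(2,1),(2(2s+1)\pm1,1),(2s+1,s))$, the same Brieskorn sphere but realized by a different knot. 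For the C family (iv) the parameter $c_1/c_2=(6n\pm1)/n$ is a genuine fraction, so here the $\gamma$-chain expanding $(c_1+c_2)/c_2$ must first be reduced before the three multiple fibers can be read off and matched with $2$, $3$, $6n\pm1$; this is where most of the care is needed.

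The main obstacle is precisely this Seifert-data bookkeeping in the presence of the degenerate specializations $a=1$, $b=1$ or $c=1$ and of nontrivial fraction parameters such as $c_1/c_2=(6n\pm1)/n$: one must check that the plumbing chains collapse to exactly three multiple fibers with the claimed multiplicities, confirm that those multiplicities are pairwise coprime so that the plumbed manifold is genuinely a homology sphere, and keep the orientation reversals consistent so that the slope computed from the $b$-sequence stays positive. Once each of these reductions is verified, the identification of the families as $K_{p,k}$ in the stated Brieskorn homology spheres follows mechanically, just as for Proposition~\ref{we}.
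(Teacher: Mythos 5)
Your plan is the paper's own: by the Remark following Proposition~\ref{we}, the proof of this proposition is deliberately omitted, the intended argument being exactly the substitution-and-verification you describe (specialize the CDE-type surgery diagrams, identify the Seifert data, read off the dual class from the padded $b$-sequence $(0,-1,0,0,1)$, and get positivity from Corollary~\ref{positivesurgery}). Two concrete points, though. First, your transcription of the E-type fraction as $[2,\ell+1,a,-m,b,-\ell]$, with $b$ and $-\ell$ as separate entries, is the correct one; the paper's printed $[2,\ell+1,a,-m,b-\ell]$ is a typo, as a numerical check against the proof of Proposition~\ref{Joshua} shows: at $\ell=1$, $m=3$, $(a,b)=(3,4)$ one has $[2,2,3,-3,4,-1]=127/90=\frac{86+37+4}{43+40+7}$, whereas $[2,2,3,-3,3]=79/56$. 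Second, and this is the one place where executing your plan literally would fail rather than ``follow mechanically'': in item (iv) the printed data $\{a,b\}=\{1,3\}$ is itself a typo. Under the convention of the preceding proposition and of item (v) (where the D-type entry is written $b=2(2s+1)\pm1-1$ precisely because the middle multiplicity is $b+1$), the values $\{a,b\}=\{1,3\}$ give Seifert invariants $(2,4,6n\pm1)$, which is not even a homology sphere. The correct substitution is $\{a,b\}=\{1,2\}$, consistent with the case $(a,b,c)=(1,2,5)$ for $\Sigma(2,3,5)$ in {\sc Figure}~\ref{DualCD}, and it reproduces the $\C_1$ entries of {\sc Table}~\ref{lens237} and \ref{lens236n1}: for instance, for $n=1$ with plus sign, $[8,\ell,-3,2,-\ell]=(56\ell^2+33\ell+5)/(7\ell^2+5\ell+1)$. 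Your Seifert-data check is exactly the check that detects this, so when you carry it out you should state the corrected parameters rather than assert that the listed ones match $2,3,6n\pm1$.
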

%The cases of the first or second examples are (generalized) C or D type lens space knot.
%The continued fractions of the such lens spaces are
%$$[-\ell,a,2,b,\ell,6n+1,\mp n],$$
%wherer $(a,b)=(2,3),(3,2)$
%$a=2$ or $b=2$ holds, then the Seifert data is $S(1,(2,1),(b,1),(c,1))$, or $S(1,(2,1),(a,1),(c,1))$ respectively.
The cases where the homology spheres with CDE type plumbing diagram are graph manifolds are remained.
These cases will be written in later section.

Here we prove the remaining part of Proposition~\ref{Joshua}.\\
{\bf Proof of Proposition~\ref{Joshua} in the cases of $\Sigma(2,5,7)$, $\Sigma(3,4,5)$ with CDE type.}
In the case of $(c_1+c_2)/c_2=7/2+1=9/2=[5,2]$, $m=2$ and $b=4$ or $a=4$, the lens spaces $L(p,q)$ are
$$[2,5,\ell,-3,4-\ell]=\frac{117\ell^2+37\ell+3}{65\ell^2+22\ell+2}$$
$$[2,5,\ell+1,4,-3,-\ell-1]=\frac{117\ell^2+145\ell+45}{65\ell^2+82\ell+26}$$
respectively.
The dual classes are $13\ell+2$ and $13\ell+8$ because the $b$-sequence of
these examples corresponds to that of the CD type surgery.

This Seifert presentation of $\Sigma(3,4,5)$ is $-S(1,(3,1),(4,1),(5,2))$.
In the case of $m=3$ and $(a,b)=(3,4)$ or $(4,3)$ for E type lens space 
surgery we obtain the following lens spaces $L(p,q)$.
$$p/q=[2,\ell+1,3,-3,4-\ell]=\frac{86\ell^2+37\ell+4}{43\ell^2+40\ell+7}$$
$$p/q=[2,\ell+1,4,-3,3-\ell]=\frac{86\ell^2+49\ell+7}{43\ell^2+46\ell+10}$$
The dual class is $43\ell+9$ and $43\ell+12$, because the $b$-sequences of E type surgery corresponds to the one of E type surgery.\hfill$\Box$

\subsubsection{FGH type lens space knots.}
\label{FGHtaipu}
The case that the parameter $m$ in the CDE type surgery is equal to $1$, equivalently $d=\infty$ is remaining.
This case also should become some lens space surgeries.
Actually, the pillowcase and double covering method are {\sc Figure}~\ref{2bribandFGH} and \ref{bandsumFGH}.
In {\sc Figure}~\ref{graphdeformation} we extend deformations of FGH type surgery to deformations of plumbing diagrams.
We call such a surgery {\it FGH type surgery}.

We assume that the links in $A,B$ and $C$ in {\sc Figure}~\ref{graphdeformation} are single linear chains.
Then we obtain the right picture in {\sc Figure}~\ref{tcde}.
Here $a,b,c'$ are rationals.
Furthermore, if the resulting manifolds are lens spaces, then we can find $a,b\in {\mathbb Z}$ and $c'\in{\mathbb Q}$.
In the case of $d=2$, $c'=d-1/c=(2c-1)/c$ for integers $c$, and $\{a,b,c\}=\{2,3,5\}$ or $\{2,3,7\}$ we have FGH type of $\Sigma(2,3,5)$, or $\Sigma(2,3,7)$ respectively.
Thus the Seifert data yielding lens spaces are $S(1,(a,1),(b,1),(c_1,c_2))$, where $c=c_1/c_2$.

For example, we obtain the following proposition.
\begin{prop}[FGH type lens space surgery]
There exist double-primitive knots $K_{p,k}$ in the following Brieskorn homology spheres
\begin{enumerate}
\item[(i)] $\Sigma(2,3,6n\pm1)$ ($\{a,b\}=\{2,3\}$ and $c_1/c_2=6\pm1/n$) (H type),
\item[(ii)] $\Sigma(2,2s+1,2(2s+1)\pm1)$ ($\{a,b\}=\{2,2(2s+1)\pm1$ $c=2+1/s$) (G type)
\end{enumerate}
such that the resulting lens spaces are 
$$[\mp n,6,2,\ell+1,3,4,-\ell],[\mp n,6,2,\ell+1,4,3,-\ell]$$
and 
$$[-s,2,2,\ell+1,3,2(2s+1)\pm1+1,-\ell],[-s,2,2,\ell+1,2(2s+1)\pm1+1,3,-\ell]$$
respectively.
\end{prop}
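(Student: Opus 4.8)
The plan is to specialize the general FGH type construction of Section~\ref{FGHtaipu}, following the same template used for the A type in Proposition~\ref{we} and for $\Sigma(2,5,7)$ and $\Sigma(3,4,5)$ in Proposition~\ref{Joshua}. The input is the extended Dehn surgery picture obtained from the pillowcase and double-branched-covering diagrams in {\sc Figure}~\ref{2bribandFGH} and \ref{bandsumFGH}, realized inside the FGH type plumbing diagram (the right picture in {\sc Figure}~\ref{tcde}), in which $\tilde{K}_{p,k}$ lies in the Seifert manifold $S(1,(a,1),(b,1),(c_1,c_2))$ with $c=c_1/c_2$. By Corollary~\ref{1deter} the framing that yields an integral ${\mathbb Z}$HS surgery is uniquely determined, so on the underlying-space side the only thing to check is that the substituted Seifert data name the advertised Brieskorn spheres; the existence of $K_{p,k}$ is then immediate from the construction.

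First I would substitute the stated parameters. For case (i) I put $\{a,b\}=\{2,3\}$ and $c=6\pm 1/n=(6n\pm1)/n$, so $(c_1,c_2)=(6n\pm1,n)$ and the data read $S(1,(2,1),(3,1),(6n\pm1,n))$, which presents $\Sigma(2,3,6n\pm1)$ up to orientation. For case (ii) I put $\{a,b\}=\{2,2(2s+1)\pm1\}$ and $c=2+1/s=(2s+1)/s$, giving $S(1,(2,1),(2(2s+1)\pm1,1),(2s+1,s))$, which presents $\Sigma(2,2s+1,2(2s+1)\pm1)$ up to orientation. In each case the two listed continued fractions correspond to the two assignments of the value $2$ and the remaining integer to $a$ and $b$; these are the $\text{H}_1,\text{H}_2$ (resp. $\text{G}_1,\text{G}_2$) variants of {\sc Table}~\ref{lens237}. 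As usual one must reverse the orientation of the diagram where needed to recover the standard orientation of the Brieskorn sphere.

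Next I would read off the lens space by running the pillowcase method along the FGH $a$-sequence, which produces exactly the claimed expansions $p/q=[\mp n,6,2,\ell+1,3,4,-\ell]$ and its partner in case (i), and $p/q=[-s,2,2,\ell+1,3,2(2s+1)\pm1+1,-\ell]$ and its partner in case (ii). Expanding these gives $p=|H_1|$ and the value of $q$. To pin down the dual class I would use that the FGH type has $b$-sequence $(0,0,1,0,0,-1)$ from {\sc Table}~\ref{continuedpo2}, together with the sequence $(p_3,p_4,\dots,p_{n+1})$ extracted from the expansion, and compute $k=\sum_{i=1}^{n}b_i(-1)^{i-1}p_{i+1}$. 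Where the diagram had to be reoriented, the $b$-sequence is multiplied by $-1$ as a vector, exactly as in the proof of Theorem~\ref{237casethm}. Finally, Corollary~\ref{positivesurgery}, which gives $q\equiv k^2\bmod p$, together with this computation of $k$ confirms that each family consists of positive integral surgeries, so the resulting lens space is $L(p,k^2)=L(p,q)$.

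The existence part is essentially free once the Seifert data are matched; the genuinely delicate point, and the step I expect to be the main obstacle, is the bookkeeping of orientations and of the $b$-sequence sign, since a slip there sends $k$ to $\pm k^{-1}$ or alters the sign of $p$ and would misname the lens space. The continued-fraction arithmetic itself is routine and, as in the earlier proofs, I would simply record the resulting $p_i$ sequences rather than derive them in full.
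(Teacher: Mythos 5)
Your proposal is correct and follows essentially the same route as the paper: the paper explicitly omits the proof of this proposition, stating in the Remark after Proposition~\ref{we} that the same method applies, namely substituting the indicated Seifert parameters into the extended FGH type surgery diagram ({\sc Figure}~\ref{DualFGH}, \ref{extE}), reading off the continued fraction, and identifying the dual class via the $b$-sequence of {\sc Table}~\ref{continuedpo2} together with Corollary~\ref{positivesurgery}, with orientation reversal handled by negating the $b$-sequence. Your verification of the Seifert data $S(1,(2,1),(3,1),(6n\pm1,n))$ and $S(1,(2,1),(2(2s+1)\pm1,1),(2s+1,s))$ and your identification of the two continued fractions with the two assignments of $a$ and $b$ are exactly what the paper's template requires.
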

%\begin{figure}[htpb]
%\begin{center}
%\includegraphics{FGHZS-eps-converted-to.pdf}
%\caption{FGH type plumbing diagram.}
%\label{tfgh}
%\end{center}
%\end{figure}

\subsubsection{IJ type lens space knot.}
\label{IJtaipu}
First, consider I type lens space knots in Brieskorn homology spheres.
Due to {\sc Figure}~\ref{graphdeformation}, if the $A$ and $B$ are two unknots with rational framings $a,b$, integral framing $c$ and empty $C$,
then the resulting manifold is a lens space.
Then, the Seifert data of the Brieskorn homology spheres are $S(1,(a_1,a_2),(b_1,b_2),(c,1))$, where $a=a_1/a_2$ and $b=b_1/b_2$.
\begin{prop}[I type lens space surgery]
There exist double-primitive knots $K_{p,k}$ in the following Brieskorn homology spheres $\Sigma(2,2s+1,2(2s+1)n\pm1)$ such that 
\begin{enumerate}
\item[(i)] $\Sigma(2,2s+1,2(2s+1)n\pm1)$ $(a_1/a_2=2(2s+1)\pm1/n$, $b_1/b_2=(2s+1)/s$, and $c=2$) ($\I_1$ type)
\item[(ii)] $\Sigma(2,2s+1,2(2s+1)\pm1)$
\begin{enumerate}
\item $a_1/a_2=2(2s+1)\pm 1$, $b_1/b_2=(2s+1)/s$ and $c=2$ ($\I_1$ type)
\item $a_1/a_2=(2s+1)/s$, $b_1/b_2=2$ and $c=2(2s+1)\pm 1$ ($\I_2$ type)
\end{enumerate}
\item[(iii)] $\Sigma(2,3,6n\pm1)$
\begin{enumerate}
\item $a_1/a_2=6\pm1/n$, $b_1/b_2=3$ and $c=2$ ($\I_1$ type)
\item $a_1/a_2=6\pm1/n$, $b_1/b_2=2$ and $c=3$ ($\I_3$ type)
\end{enumerate}
such that the resulting lens spaces $L(p,q)$ satisfy with
\begin{equation}
\label{Itypecontinued}
p/q=[\alpha_n,\cdots, \alpha_1,\ell+1,-2,c,-2,-\ell,\beta_1,\beta_2\cdots, \beta_m],
\end{equation}
where $a_1/a_2=[\alpha_1,\cdots, \alpha_n]$ and $b_1/b_2=[\beta_1,\beta_2,\cdots, \beta_m]$ for $\alpha_i,\beta_j\in {\mathbb Z}$.
\end{enumerate}

\end{prop}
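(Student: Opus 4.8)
The plan is to follow the template established in the proof of Proposition~\ref{we}, now applied to the I type data. The starting point is the I type dual knot $K_{p,k}$ together with its extended surgery presentation, obtained from the pillowcase method of Figures~\ref{2bribandI} and \ref{bandsumI} and recorded in Figures~\ref{DualI} and \ref{extI}. In that presentation the two side chains $A$ and $B$ carry rational framings $a=a_1/a_2$ and $b=b_1/b_2$ while the remaining framing $c$ is an integer, so that, after the graph deformation of Figure~\ref{graphdeformation}, the ambient manifold is the Seifert manifold with data $S(1,(a_1,a_2),(b_1,b_2),(c,1))$. By Corollary~\ref{1deter} the $0$-framing used for the integral $\mathbb{Z}$HS surgery is uniquely determined, so the first step is purely a substitution: for each of the cases (i)--(iii) I would insert the indicated values of $a_1/a_2$, $b_1/b_2$ and $c$ into this presentation.

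Next I would identify the resulting Seifert manifold with the claimed Brieskorn homology sphere. Here one checks that the three multiplicities are pairwise coprime and that the Euler number constraint $e-\sum_{i=1}^3 b_i/a_i=\pm1/(a_1a_2a_3)$ recalled in the preliminaries on Brieskorn homology spheres is met, reversing the orientation of the diagram where necessary to recover the standard orientation of $\Sigma$. For case (i) this gives $\Sigma(2,2s+1,2(2s+1)n\pm1)$ via the $\I_1$ slot, for case (ii) it gives $\Sigma(2,2s+1,2(2s+1)\pm1)$ via the $\I_1$ and $\I_2$ slots, and for case (iii) it gives $\Sigma(2,3,6n\pm1)$ via the $\I_1$ and $\I_3$ slots.

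With the homology sphere fixed, I would read off the lens space by running Kirby calculus on the linear plumbing produced by the deformation, exactly as in the worked examples of Section~\ref{Ataipu} and Section~\ref{CDEtaipu}. The two side chains contribute the continued-fraction blocks $[\alpha_n,\dots,\alpha_1]$ and $[\beta_1,\dots,\beta_m]$ expanding $a_1/a_2$ and $b_1/b_2$, while the core of the I type surgery supplies the invariant central block $[\ell+1,-2,c,-2,-\ell]$; concatenating these yields the formula~(\ref{Itypecontinued}). Finally, to certify that the surgeries are the \emph{positive integral} surgeries claimed, I would invoke the IJ type $b$-sequence $(0,0,1,0,-1,0,0)$ from Table~\ref{continuedpo2}, compute the dual class from $k=\sum_{i=1}^n b_i(-1)^{i-1}p_{i+1}$, and then apply Corollary~\ref{positivesurgery} to conclude $q\equiv k^2\bmod p$ and that the slope is a positive integer.

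The step I expect to be the main obstacle is the continued-fraction bookkeeping in~(\ref{Itypecontinued}): one must verify that grafting the two expansions $[\alpha_1,\dots,\alpha_n]$ and $[\beta_1,\dots,\beta_m]$ onto the central block produces precisely the stated bracket, and not merely an equivalent one after further collapsing, while simultaneously the $b$-sequence computation returns the dual class $k$ with $q=k^2$. Keeping the sign conventions $\epsilon_i=(-1)^{i-1}\mathrm{sign}(a_i)$ consistent through the orientation reversals, so that the final slope comes out positive rather than its mirror, is the delicate part. Once a single representative case is checked explicitly the remaining cases follow by the same substitution, as licensed by the Remark following Proposition~\ref{we}.
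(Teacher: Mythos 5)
Your proposal is correct and follows essentially the same route the paper intends: the paper omits an explicit proof for the I type case, invoking the Remark after Proposition~\ref{we} that the substitution-plus-$b$-sequence method (insert the stated $a_1/a_2$, $b_1/b_2$, $c$ into the extended diagrams of Figures~\ref{DualI} and \ref{extI}, identify the Brieskorn sphere, read off the continued fraction, and verify positivity of the slope via the IJ $b$-sequence and Corollary~\ref{positivesurgery}) carries over verbatim. Your reconstruction, including the use of Corollary~\ref{1deter} for the unique $0$-framing and the concatenation of the $[\alpha_i]$, $[\ell+1,-2,c,-2,-\ell]$, and $[\beta_j]$ blocks, is exactly that argument.
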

Further, there are many other examples.
\begin{prop}[I type lens space surgery]
There exist double-primitive knots $K_{p,k}$ in the following Brieskorn homology spheres
\begin{enumerate}
\item[(iv)] $\Sigma(3,3n+1,12n+5)$ ($a_1/a_2=[3,-n]$, $b_1/b_2=[3,n+1,4]$, $c=3$ $(n>0)$) (I type),
\item[(v)] $\Sigma(3,9n+4,18n+5)$ ($a_1/a_2=[5,2,n+1]$, $b_1/b_2=[3,2,2,2,n,2]$, $c=3, (n>0)$) (I type),
\end{enumerate}
such that 
the resulting lens spaces $L(p,q)$ satisfy (\ref{Itypecontinued}), equivalently
the continued fraction can be deformed as follows:
$$p/q=[-n,3,\ell+1,2,4,2,-\ell+1,3,n+1,4],$$
$$p/q=[n+1,2,5,\ell+1,2,4,2,-\ell+1,3,2,2,2,n,2]$$
respectively.
\end{prop}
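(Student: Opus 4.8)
The plan is to treat (iv) and (v) as direct specializations of the general I type construction, exactly in the spirit of the proof of Proposition~\ref{we} and the Remark following it. Recall that an I type lens space knot arises from the plumbing diagram of {\sc Figure}~\ref{graphdeformation} (equivalently the dual knot of {\sc Figure}~\ref{DualI}) when the legs $A,B$ are the linear chains realizing $a_1/a_2$ and $b_1/b_2$ and the third multiplicity is the integer $c$, and that the resulting lens space is then recorded by the continued fraction (\ref{Itypecontinued}). So the entire argument reduces to feeding the prescribed continued-fraction data into this machine and simplifying.

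First I would fix the Seifert data $S(1,(a_1,a_2),(b_1,b_2),(c,1))$ by expanding the given fractions: for (iv), $a_1/a_2=[3,-n]=(3n+1)/n$, $b_1/b_2=[3,n+1,4]=(12n+5)/(4n+3)$ and $c=3$; for (v), $a_1/a_2=[5,2,n+1]=(9n+4)/(2n+1)$ and $c=3$. Next I would verify that this data actually presents the advertised Brieskorn homology sphere. This is two routine checks: that the three multiplicities are pairwise coprime (so that the plumbing is a homology sphere), and that the Seifert invariants satisfy the Euler-number identity $e-\sum_i b_i/a_i=\pm 1/(a_1a_2a_3)$ recorded in Section~2. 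For (iv) one finds directly that $3,\,3n+1,\,12n+5$ are pairwise coprime and that $e-\sum_i b_i/a_i=1/\bigl(3(3n+1)(12n+5)\bigr)$, which pins down $\Sigma(3,3n+1,12n+5)$ up to the orientation one must subsequently reverse; case (v) is handled the same way.

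With the homology sphere identified, I would substitute the data into (\ref{Itypecontinued}) to obtain $p/q$ as an explicit continued fraction, and then reduce it by the usual plumbing/continued-fraction moves (blow-downs of $\pm1$ entries together with the sign normalisation forced by the orientation reversal) into the displayed forms $[-n,3,\ell+1,2,4,2,-\ell+1,3,n+1,4]$ and $[n+1,2,5,\ell+1,2,4,2,-\ell+1,3,2,2,2,n,2]$. Finally, to confirm that the knot produced is genuinely a double-primitive $K_{p,k}$ and not merely some lens space knot, I would read off its $b$-sequence from the continued fraction, as in {\sc Table}~\ref{continuedpo2}, and check that it agrees with the I type pattern $(0,0,1,0,-1,0,0)$; positivity and integrality of the slope then follow from Corollary~\ref{positivesurgery} together with the $b$-sequence, just as in the proof of Theorem~\ref{237casethm}.

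I expect the real difficulty to lie entirely in the sign and orientation bookkeeping of this last reduction. The coprimality and Euler-number verifications are mechanical, but the continued fraction coming out of (\ref{Itypecontinued}) mixes positive and negative entries through the central block $\ell+1,\,-2,\,c,\,-2,\,-\ell$, while the target forms carry the block $\ell+1,\,2,\,c+1,\,2,\,-\ell+1$; since the ambient Brieskorn sphere is presented only up to orientation, a naive term-by-term matching will not agree until the orientation reversal and the intervening blow-down normalisations have been applied. The crux of the proof is therefore to carry out that normalisation carefully and confirm that the two presentations describe the same lens space $L(p,k^2)$.
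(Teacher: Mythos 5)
Your outline is in fact the paper's own (officially omitted) proof method: the Remark after Proposition~\ref{we} delegates these propositions to the same substitution procedure, i.e.\ feed the data into the I type construction of {\sc Figure}~\ref{DualI} and \ref{extI}, verify the Seifert presentation, and identify the dual class via the $b$-sequence and Corollary~\ref{positivesurgery}; your execution of case (iv) (coprimality plus the Euler-number identity $1-\frac{n}{3n+1}-\frac{4n+3}{12n+5}-\frac13=\frac{1}{3(3n+1)(12n+5)}$) is correct. However, the step you dismiss as ``handled the same way'' for case (v) actually fails. With the paper's continued fraction convention, $[3,2,2,2,n,2]=\frac{18n-23}{8n-10}$ (e.g.\ $13/6$ at $n=2$), and the presentation $S(1,(9n+4,2n+1),(18n-23,8n-10),(3,1))$ has $1-\frac{2n+1}{9n+4}-\frac{8n-10}{18n-23}-\frac13=\frac{5-12n}{3(9n+4)(18n-23)}$, so $|H_1|=12n-5\neq1$: this data does not present a homology sphere at all, let alone $\Sigma(3,9n+4,18n+5)$. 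For that Brieskorn sphere the third leg would have to be $\frac{18n+5}{8n+2}$ (since $(12n+5)(18n+5)-3(8n+2)(9n+4)=1$), whose expansion $[3,2,2,3,2,\dots,2,3]$ has length growing with $n$, so no fixed-length-six sequence can encode it. The verification you defer is exactly where case (v) breaks down; your proof asserts it succeeds, and it does not.

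The second gap is your self-identified ``crux'': reducing the output of (\ref{Itypecontinued}) to the displayed fractions by blow-downs and an orientation reversal. This cannot be carried out, because the two expressions are not presentations of the same lens space. Test it on the paper's own $\I_1$ data ($a_1/a_2=5$, $b_1/b_2=3$, $c=2$, $\ell=1$): formula (\ref{Itypecontinued}) gives $[5,2,-2,2,-2,-1,3]=367/80$, whereas {\sc Table}~\ref{continuedpo} and the pattern of the displayed forms (middle block $\ell+1,2,c+1,2,-\ell+1$) give $[5,2,2,3,2,0,3]=137/32$, which is the correct $p/q=(120\ell^2+16\ell+1)/(24\ell^2+8\ell)$ at $\ell=1$. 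Since $367\neq137$, and blow-ups, blow-downs, orientation reversal and inverting $q$ all preserve $p$, no normalisation relates the two: equation (\ref{Itypecontinued}) as printed is inconsistent with the paper's own table and cannot serve as the starting point of the argument. A proof along the paper's lines must instead derive the displayed continued fractions directly from the Kirby calculus of {\sc Figure}~\ref{DualI} and \ref{extI} (which is what produces the block $\ell+1,2,c+1,2,-\ell+1$), and only then read off the $b$-sequence and invoke Corollary~\ref{positivesurgery} as you propose.
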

Next, consider J type lens space surgery.
If a Brieskorn homology sphere admits Seifert data $S(1,(2,1),(a_1,a_2),(b_1,b_2))$, then we can do the Dehn surgery of {\sc Figure}~\ref{DualJ} of \ref{extJ}.
Then the resulting manifolds are lens spaces $L(p,q)$ with 
$$p/q=[\alpha_n,\cdots, \alpha_1,\ell+1,3,3,-\ell,\beta_1,\beta_2\cdots, \beta_m],$$
where $a_1/a_2=[\alpha_1,\cdots, \alpha_n]$ and $b_1/b_2=[\beta_1,\beta_2,\cdots, \beta_m]$ for $\alpha_i,\beta_j\in {\mathbb Z}$.
\begin{prop}[J type lens space surgery]
There exist double-primitive knots $K_{p,k}$ in Brieskorn homology spheres $\Sigma(2,2s+1,2(2s+1)n\pm1)$, $\Sigma(2,4n+1,12n+5)$ and $\Sigma(2,8n+3,12n+5)$ satisfying the following coefficients $A,B,c$ in {\sc Figure}~\ref{graphdeformation}: 
\begin{enumerate}
\item[(vi)] $\Sigma(2,2s+1,2(2s+1)n\pm1)$ ($A=[2(2s+1),\mp n]$, $B=[2,-s], s>0$),
\item[(vii)] $\Sigma(2,4n+1,12n+5)$ ($A=[4,-n]$, $B=[4,n+1,3], n>0)$,
\item[(viii)] $\Sigma(2,8n+3,12n+5)$ ($A=[8n+3]$, $B=[2,-2n,3], n>0)$, 
\end{enumerate}
such that the resulting lens spaces $L(p,q)$ of (vi), (vii), and (viii) are the following 
$$p/q=[\mp n,2(2s+1),-\ell,3,3,\ell+1,2,-s],$$
$$p/q=[-n,4,-\ell,3,3,\ell+1,4,n+1,3],$$
$$p/q=[8n+3,-\ell,3,3,\ell+1,2,-2n,3]$$
respectively.
\end{prop}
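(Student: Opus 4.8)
The plan is to follow the method of Proposition~\ref{we} together with the J type description given just before the statement: one substitutes the prescribed chains $A$, $B$ and the integer $c$ into the extended J type Dehn surgery diagram of {\sc Figure}~\ref{DualJ} and {\sc Figure}~\ref{extJ}, which realizes $K_{p,k}$ in the plumbed homology sphere with Seifert data $S(1,(2,1),(a_1,a_2),(b_1,b_2))$, where $a_1/a_2$ and $b_1/b_2$ are the rationals encoded by $A$ and $B$. By Corollary~\ref{1deter} the framing making the ambient manifold a homology sphere is forced, so the substitution yields a genuine double-primitive knot with no remaining freedom.

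The first step is to identify the ambient manifold. Expanding the continued fractions gives, for (vi), $A=[2(2s+1),\mp n]=(2(2s+1)n\pm1)/n$ and $B=[2,-s]=(2s+1)/s$, hence the Seifert data $S(1,(2,1),(2(2s+1)n\pm1,n),(2s+1,s))$; likewise (vii) produces $S(1,(2,1),(4n+1,n),(12n+5,3n+2))$ and (viii) produces $S(1,(2,1),(8n+3,1),(12n+5,6n+1))$. To confirm these are the claimed Brieskorn spheres I would check the homology-sphere relation $e-\sum_i b_i/a_i=\pm1/(a_1a_2a_3)$; each is a one-line verification (for instance the numerator over $2(4n+1)(12n+5)$ collapses to $+1$ in (vii) and to $-1$ in (viii)), and the sign records whether one must reverse orientation to match the standard presentation, e.g. $\mp S(1,(2,1),(2s+1,s),(2(2s+1)n\pm1,n))$ recalled in Section~\ref{simple11knotsection}.

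The second step reads off the lens space from the J type formula $p/q=[\alpha_n,\cdots,\alpha_1,\ell+1,3,3,-\ell,\beta_1,\cdots,\beta_m]$, with $a_1/a_2=[\alpha_1,\cdots,\alpha_n]$ and $b_1/b_2=[\beta_1,\cdots,\beta_m]$. Plugging in the chains above gives the displayed expressions once the parameter is taken in the negative-$\ell$ untwisting convention, i.e. after the substitution $\ell\mapsto-\ell-1$, which turns the central block $\ell+1,3,3,-\ell$ into $-\ell,3,3,\ell+1$ and merely relabels the family (as $\ell$ ranges over all nonzero integers). The third step certifies that each surgery is a positive integral $p$-surgery carrying the asserted dual class: I would expand $p/q$ to the sequence $p_1,p_2,\dots$, compute the induced $b$-sequence, and check that its nonzero entries are confined to the central $\ell+1,3,3,-\ell$ block and agree there with the J type pattern $(0,0,1,0,-1,0,0)$ of {\sc Table}~\ref{continuedpo2}, the remaining entries being the zeros forced by the chains $A$ and $B$. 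This identifies the knot as $K_{p,k}$ and determines $k$; Corollary~\ref{positivesurgery} then gives $q\equiv k^2\bmod p$ and positivity.

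The main obstacle is the orientation and continued-fraction bookkeeping that threads these steps together rather than any single computation. One must carry the sign detected in the Seifert relation consistently into the surgery, normalize the continued fraction (reversing the $A$ chain and passing to the $\ell\mapsto-\ell-1$ convention) so that it becomes exactly the displayed $p/q$, and verify that the computed $b$-sequence lands precisely on $(0,0,1,0,-1,0,0)$ rather than a sign- or order-variant; only that normalization lets Corollary~\ref{positivesurgery} identify the dual knot as $K_{p,k}$. Since the chains in (vii) and (viii) are longer than in (vi), the continued-fraction expansions and the zero-padding of the $b$-sequence require particular care there.
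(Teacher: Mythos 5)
Your proposal is correct and follows essentially the same route as the paper: the paper formally omits the proof of this proposition (its Remark declares that the method of Proposition~\ref{we} applies), namely substituting the prescribed chains into the extended J type diagram of {\sc Figure}~\ref{DualJ}/\ref{extJ}, verifying the Seifert data gives the claimed Brieskorn sphere up to orientation, reading off $p/q$ from the J type continued-fraction formula, and identifying the dual class via the $b$-sequence $(0,0,1,0,-1,0,0)$ together with Corollary~\ref{positivesurgery}. Your additional observations --- the explicit $\pm1$ checks of the homology-sphere relation and the reconciliation of the central block $\ell+1,3,3,-\ell$ with the displayed $-\ell,3,3,\ell+1$ via $\ell\mapsto-\ell-1$ --- are exactly the bookkeeping the paper leaves implicit.
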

According to \cite{AL} the homology spheres of (iv) (vii) bound rational 4-balls.
Thus the resulting lens spaces bound 4-dimensional rational 2-spheres $\mathcal{S}$, namely $H_\ast(\mathcal{S},{\mathbb Q})=H_\ast(S^2,{\mathbb Q})$ and $\partial \mathcal{S}=L(p,q)$.
These Brieskorn homology spheres are just examples of the J type lens space surgeries, hence, one can find double-primitive knots in other Brieskorn homology spheres of J type lens space surgery.

The resolution diagram of type II Brieskorn homology spheres with Seifert data $S(1,(2,1),(a_1,a_2),(b_1,b_2))$ are classified in \cite{Matsumoto}.
\begin{figure}[htpb]
\begin{center}
\includegraphics{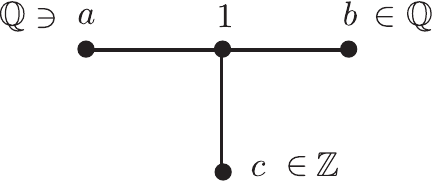}
\caption{IJ type plumbing diagram.}
\label{tij}
\end{center}
\end{figure}

\subsubsection{K type lens space knot.}
\label{ktypelensspaceknotsection}
We consider K type lens surgery.
This case is exceptional in terms of quadratic family, 
because this family contains one example only in $\Sigma(2,3,5)$.
The pillowcase method and double covering are {\sc Figure}~\ref{2bribandK} and \ref{bandsumK}.
From the extension of Dehn surgery in {\sc Figure}~\ref{DualK} and \ref{extK}, 
if the K type Seifert homology spheres yield lens spaces, then 
the Seifert data are $S(1,(3,1),(m,m-1),(a_1,a_2))$ is {\sc Figure}~\ref{tk}.
\begin{prop}[K type lens space surgery]
There exists a double-primitive knot $K_{p,k}$ in the Brieskorn homology sphere $\Sigma(2,3,6n\pm1)$ satisfying the following data:
\begin{enumerate}
\item[(i)] $\Sigma(2,3,6n\pm1)$ ($m=2$ and $a_1/a_2=6\pm 1/n$)
\end{enumerate}
such that the resulting lens space $L(p,q)$ satisfy with 
$$p/q=[\mp n,6,3,3,3,2].$$
\end{prop}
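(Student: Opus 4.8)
The plan is to follow the template established in the proof of Proposition~\ref{we} and recorded in the Remark after it, specializing the K type construction of Appendix 3 (Figures~\ref{DualK} and~\ref{extK}). First I would take the extended Dehn surgery diagram for the K type knot, which presents a knot in a Seifert homology sphere with Seifert data $S(1,(3,1),(m,m-1),(a_1,a_2))$, and substitute the indicated values $m=2$ and $a_1/a_2=6\pm1/n=(6n\pm1)/n$. With these substitutions the three multiple fibers become $(3,1)$, $(2,1)$ and $(6n\pm1,n)$, so the underlying manifold is $S(1,(3,1),(2,1),(6n\pm1,n))$, which is $\Sigma(2,3,6n\pm1)$ up to orientation. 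I would record the orientation convention explicitly, since the standard orientation of $\Sigma(2,3,6n\pm1)$ is $\mp S(1,(2,1),(3,1),(6n\pm1,n))$, so the $+S$ data above agrees with the standard orientation only in one of the two sign cases and must be reversed in the other.

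Next I would compute the continued fraction $p/q=[\mp n,6,3,3,3,2]$ explicitly, obtaining $p$ and $q$ as its numerator and denominator and recording the tail $(p_2,p_3,\ldots,p_{n+1})$ from the recursion $p_i=a_ip_{i+1}-p_{i+2}$. From these $p_i$ I would read off the $b$-sequence using the relations $k=b_1p_2-h_2$, $h_i=b_ip_{i+1}-h_{i+1}$ and $h_n=b_np_{n+1}$, and check that it matches the K type $b$-sequence $(0,-1,0,-1,1)$ of Table~\ref{continuedpo2}, up to the sign change forced by the orientation reversal. This matching is exactly what certifies that the knot produced by the diagram is the representative $K_{p,k}$ with the prescribed dual class $k$, rather than some other simple $(1,1)$-knot sharing the same slope.

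Finally, with the dual class $k$ determined by the $b$-sequence, I would invoke Corollary~\ref{positivesurgery}: since $q\equiv k^2\bmod p$ and the $b$-sequence certifies that the homology-sphere framing is the positive integral one, the surgery is the positive $p$-surgery and yields $L(p,k^2)$. The existence of the double-primitive knot $K_{p,k}$ then follows because the whole diagram arises from the pillowcase method (untwisting the pillowcase and $K'$ in the sense of Section~\ref{pme}), which by construction produces a simple $(1,1)$-knot whose dual is double-primitive; this is precisely the K type analogue of the computation carried out explicitly for A type in Section~\ref{Ataipu}.

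The step I expect to be the main obstacle is the orientation bookkeeping. Because $\Sigma(2,3,6n\pm1)=\mp S(1,(2,1),(3,1),(6n\pm1,n))$, the sign of the leading continued-fraction entry $\mp n$ and the overall sign of the $b$-sequence are coupled to which of the two cases one is in, and a sign error there would produce a negative slope and misidentify the lens space. Pinning down these signs so that Corollary~\ref{positivesurgery} applies with a genuinely \emph{positive} slope $p$ is the delicate point; the continued-fraction arithmetic itself is routine once the leading sign is fixed.
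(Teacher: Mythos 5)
Your plan is, in outline, exactly the paper's own (formally omitted) proof: by the Remark following Proposition~\ref{we}, one substitutes $m=2$ and $a_1/a_2=(6n\pm1)/n$ into the extended K type surgery diagram (Figures~\ref{DualK}, \ref{extK}), notes that $S(1,(3,1),(2,1),(6n\pm1,n))$ is $\Sigma(2,3,6n\pm1)$ up to orientation, identifies the dual class through the (orientation-adjusted) K type $b$-sequence of Table~\ref{continuedpo2}, and applies Corollary~\ref{positivesurgery} to get a positive integral slope; your orientation bookkeeping is also the concern the paper itself flags, handled the same way as in the $\Sigma(2,3,7)$ case.

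The step that will fail is the consistency check in your second paragraph, because you take the displayed continued fraction as the input to be verified, and that fraction is not what the construction yields. Compute: $[3,3,3,2]=[2,-2,-3,-3]=34/13$, so $[6,3,3,3,2]=191/34$ and $[\mp n,6,3,3,3,2]=\mp(191n\pm34)/191$. At $n=1$ with the lower sign this gives $L(157,34)$, whereas the unique K type surgery in $\Sigma(2,3,5)$ is $L(191,34)$ (Tables~\ref{po} and \ref{continuedpo}); at $n=1$ with the upper sign the order is $225$, which is not even coprime to the K type dual class $k=15$, so it cannot arise from a homology sphere surgery with that dual class, and it contradicts $p'=259$ in Table~\ref{lens237}. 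Hence no $b$-sequence matching can succeed: executed honestly, your step would refute the displayed formula rather than confirm it. What the Kirby calculus on Figure~\ref{extK} actually produces is $[\mp n,7,3,3,3,2]=\mp(225n\pm34)/225=\mp(nk^2\pm34)/k^2$ with $k=15$: here $q=k^2$ on the nose, the lower sign at $n=1$ collapses to $[6,3,3,3,2]=191/34$ by the identity $[1,a,c_1,\dots,c_r]\sim[a-1,c_1,\dots,c_r]$, and the slopes agree with the K rows of Tables~\ref{lens237} and \ref{lens236n1}. The discrepancy has a simple explanation: the H type sequence in Table~\ref{continuedpo} begins with $5$, whence its generalization $[\mp n,6,\dots]$ in the FGH proposition, but the K type sequence begins with $6$, so its generalization must begin $[\mp n,7,\dots]$; the ``$6$'' in the statement is a slip. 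So keep your method, but derive the continued fraction from the diagram as the paper does, rather than from the statement; that both completes the proof and corrects the formula.
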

\begin{figure}[htpb]
\begin{center}
\includegraphics{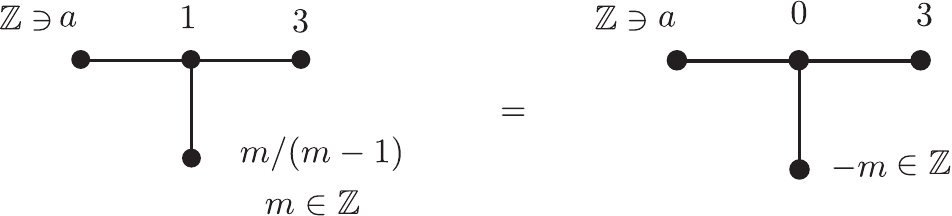}
\caption{K type plumbing diagram.}
\label{tk}
\end{center}
\end{figure}
\begin{figure}[htpb]
\begin{center}
\includegraphics{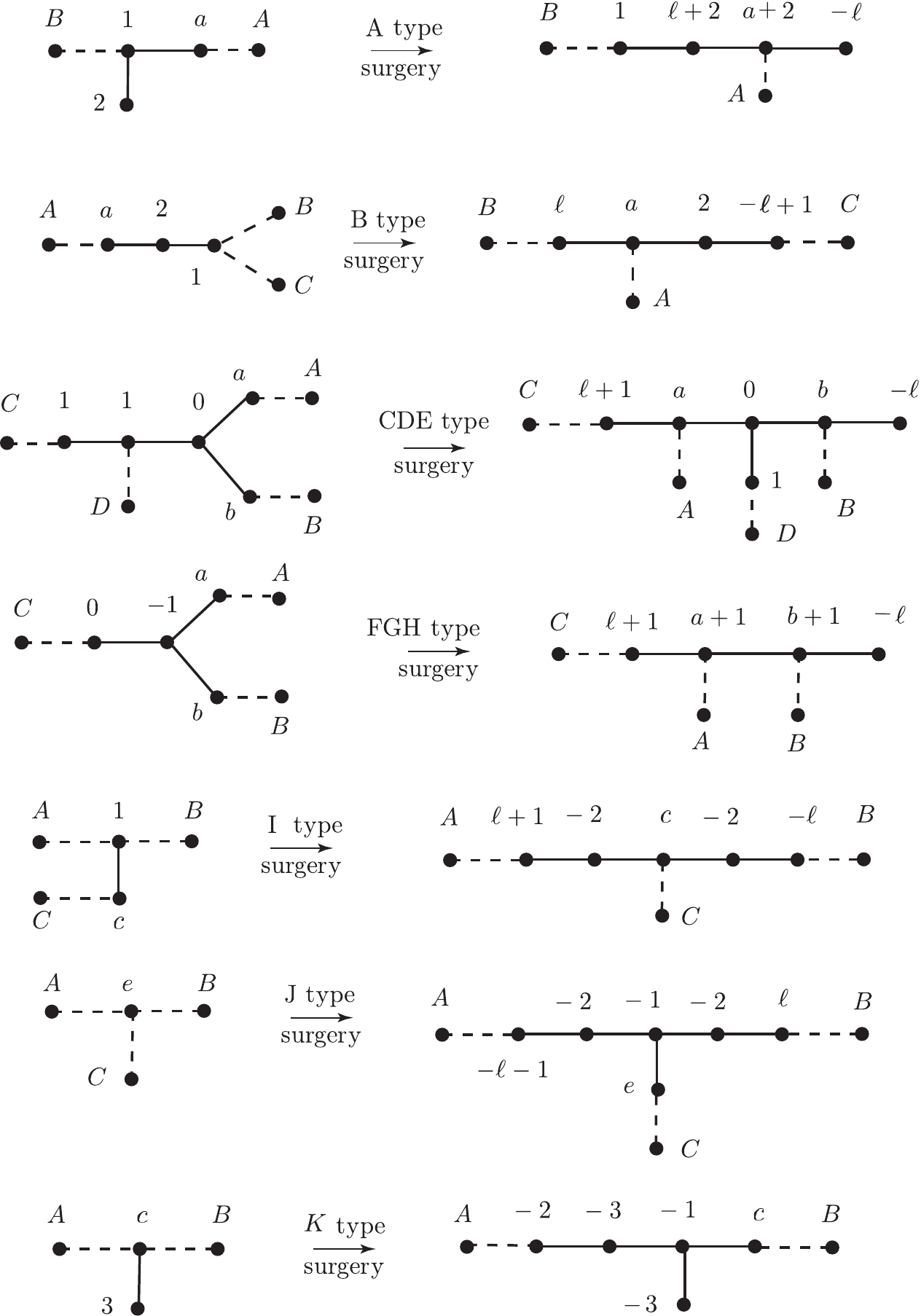}
\caption{The deformations of plumbing diagrams obtained by extending lens space surgeries in $\Sigma(2,3,5)$.}
\label{graphdeformation}
\end{center}
\end{figure}

%Especially, we focus on homology spheres splicing two Brieskorn homology spheres.

\subsection{Proofs of Theorems}
In this section we prove the theorem stated in Section~\ref{intro}.\\
{\bf Proofs of Theorem~\ref{sigma236npm1}, \ref{2s+1theorem}, and \ref{22s+1ntheorem}}.
Those proofs that those homology spheres contain these lens space knots 
immediately follows from propositions which are proven from Section~\ref{Ataipu} to  Section~\ref{ktypelensspaceknotsection}.
\hfill$\Box$\\
{\bf Proof of Theorem~\ref{graphhomspheres}.}
As described in {\sc Figure}~\ref{graphdeformation}, there exist graph homology spheres with not Brieskorn but CDE type plumbing diagram.
We have only to check that the graph manifolds for the diagrams are homology spheres.
\hfill$\Box$\\

\subsection{Dehn surgeries in $S^3$ as graph deformations in {\sc Figure}~\ref{graphdeformation}.}
Our method up to this point can be applied to construct some families of the lens space, 
connected sum of lens space and Seifert manifold surgeries in $S^3$.
Does this viewpoint have somehow relation to networking theory of Seifert surgeries by Deruelle, Miyazaki and Motegi in \cite{DMT}?
This may be clarified in future research.

In {\sc Figure}~\ref{graphdeformation} we give graph deformations coming from lens surgeries of $\Sigma(2,3,5)$.
We illustrate that some graph deformations in these figures give families of lens space surgeries in $S^3$.
To clarify those examples one has only to check that the corresponding left diagrams in {\sc Figure}~\ref{graphdeformation} present $S^3$.
\begin{exm}
\begin{itemize}
\item In A type surgery set $a=3$, $A=B=\emptyset$, then the resulting lens space is $L(5\ell^2+5\ell+1,5\ell^2)$ and $k=5\ell+2$.
This family belongs to Berge's type VII.
\item In A type surgery, set $A=[0]$, $B=[3]$, then the resulting manifolds are $L(2\ell+1,\ell)\#L(\ell,-1)$.
This family presents reducible surgeries on torus knots.
\item In B type surgery in putting $A=[1]$, $B=[1,2]$ and $C=[-1,-3]$, then the resulting lens spaces are $L(2\ell^2-11\ell+14,2\ell^2-9\ell+11)$.
This family is surgeries of $(2\ell-5,\ell-3)$-torus knot.
\item In J type surgery, set $A=[-1,-2,\cdots, -2]$, $B=[1,2,\cdots,2]$, and $C=[-1,-2,\cdots,-2]$,
$e=-a+b-c\pm1$, where the lengths of arms are $a,b$ and $c$.
Then the resulting manifolds are Seifert manifold with Seifert data
$S(-1,(2\ell-2a+1,a-\ell-1),(-a+b\pm1,1),(2\ell-2b+1,b-\ell))$.
\end{itemize}

\end{exm}
\section*{Appendix 1}
In this appendix we give pillowcase methods ({\sc Figure}~\ref{2bribandA}, ~\ref{2bribandB}, ~\ref{2bribandCDE}, ~\ref{2bribandFGH}, ~\ref{2bribandI}, ~\ref{2bribandJ} and ~\ref{2bribandK}).
\begin{figure}[htb]
\begin{center}
\includegraphics[width=.9\textwidth]{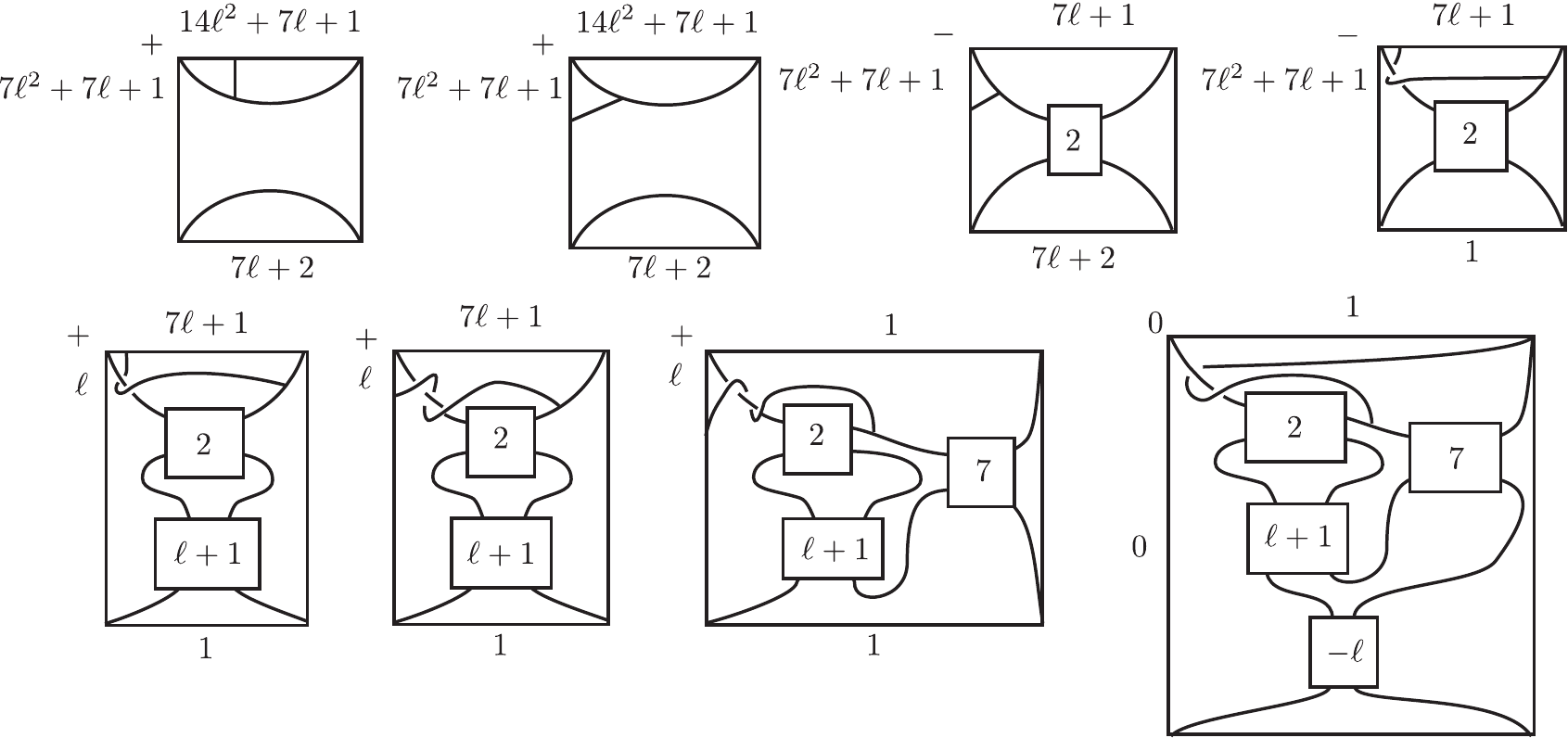}
\caption{The pillowcase method for an A type simple (1,1)-knot with the band (arc $K'$ in the sense of Section~\ref{pme}).}
\label{2bribandA}
\end{center}
\end{figure}
\begin{figure}[htbp]
\begin{center}
\includegraphics[width=.9\textwidth]{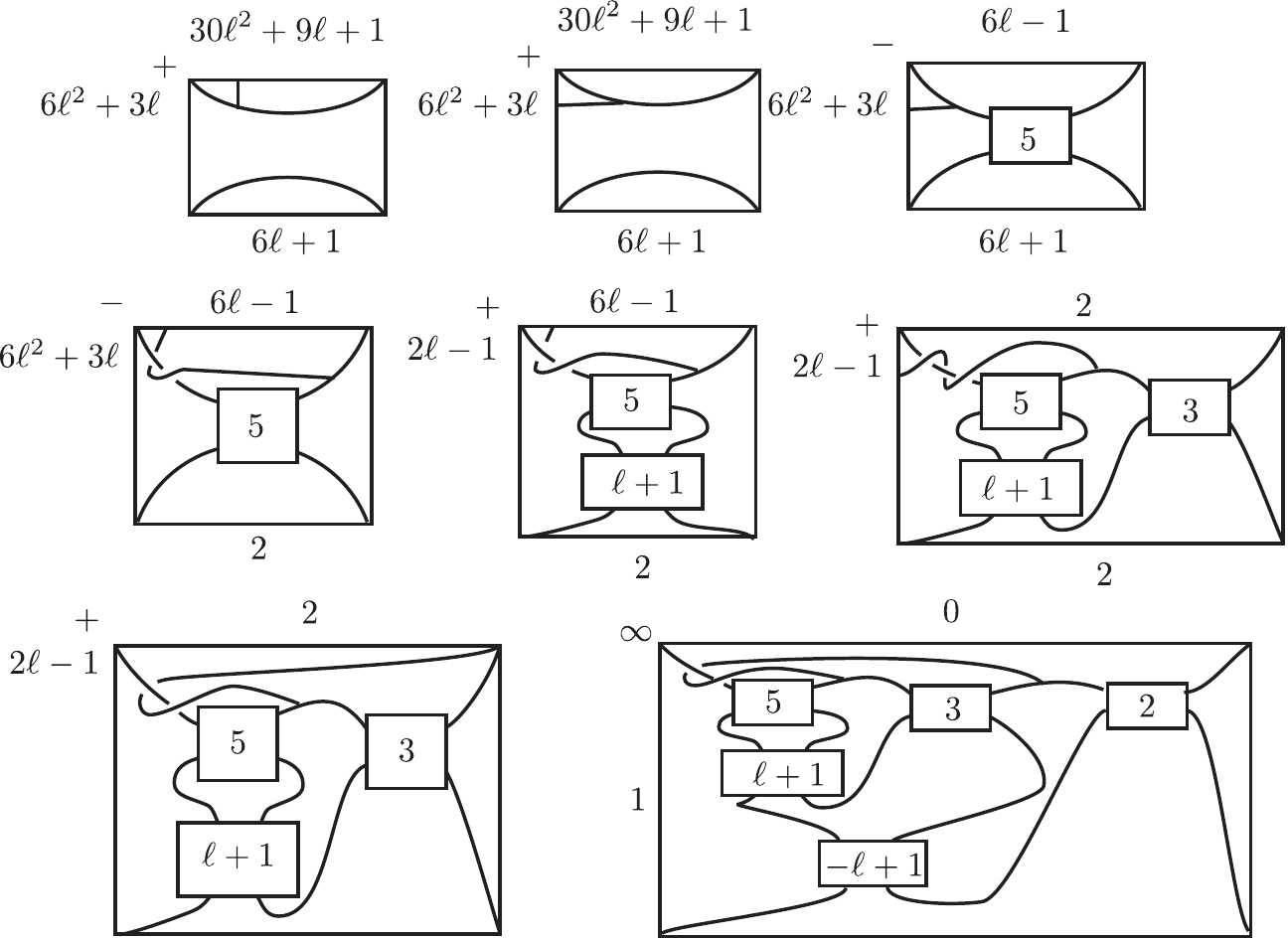}
\caption{The pillowcase method for a B type simple (1,1)-knot with the band.}
\label{2bribandB}
\end{center}
\end{figure}
\begin{figure}[htbp]
\begin{center}
\includegraphics[width=.8\textwidth]{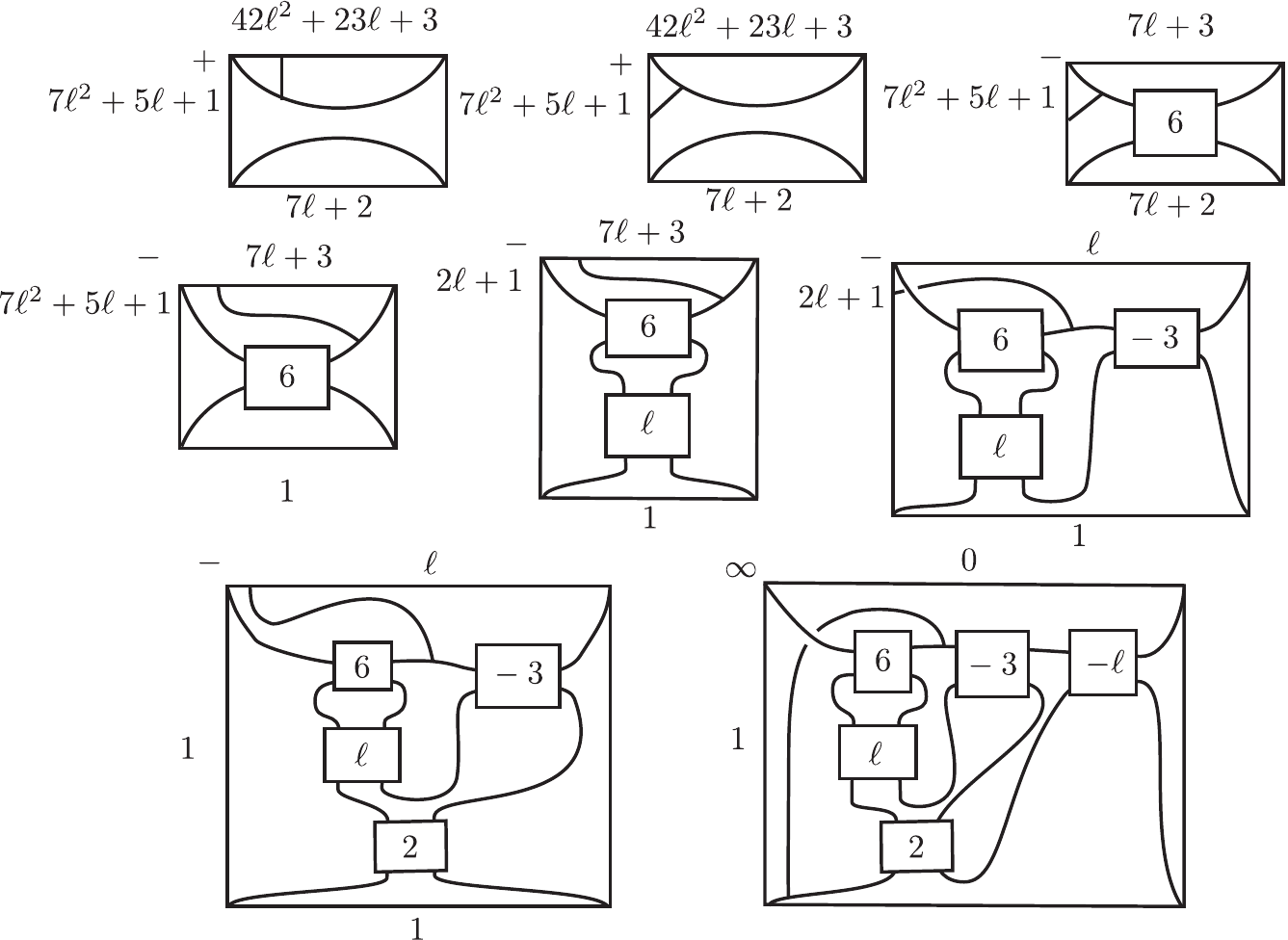}
\caption{The pillowcase method for a CDE type simple (1,1)-knot with the band.}
\label{2bribandCDE}
\end{center}
\end{figure}
\begin{figure}[htbp]
\begin{center}
\includegraphics[width=.8\textwidth]{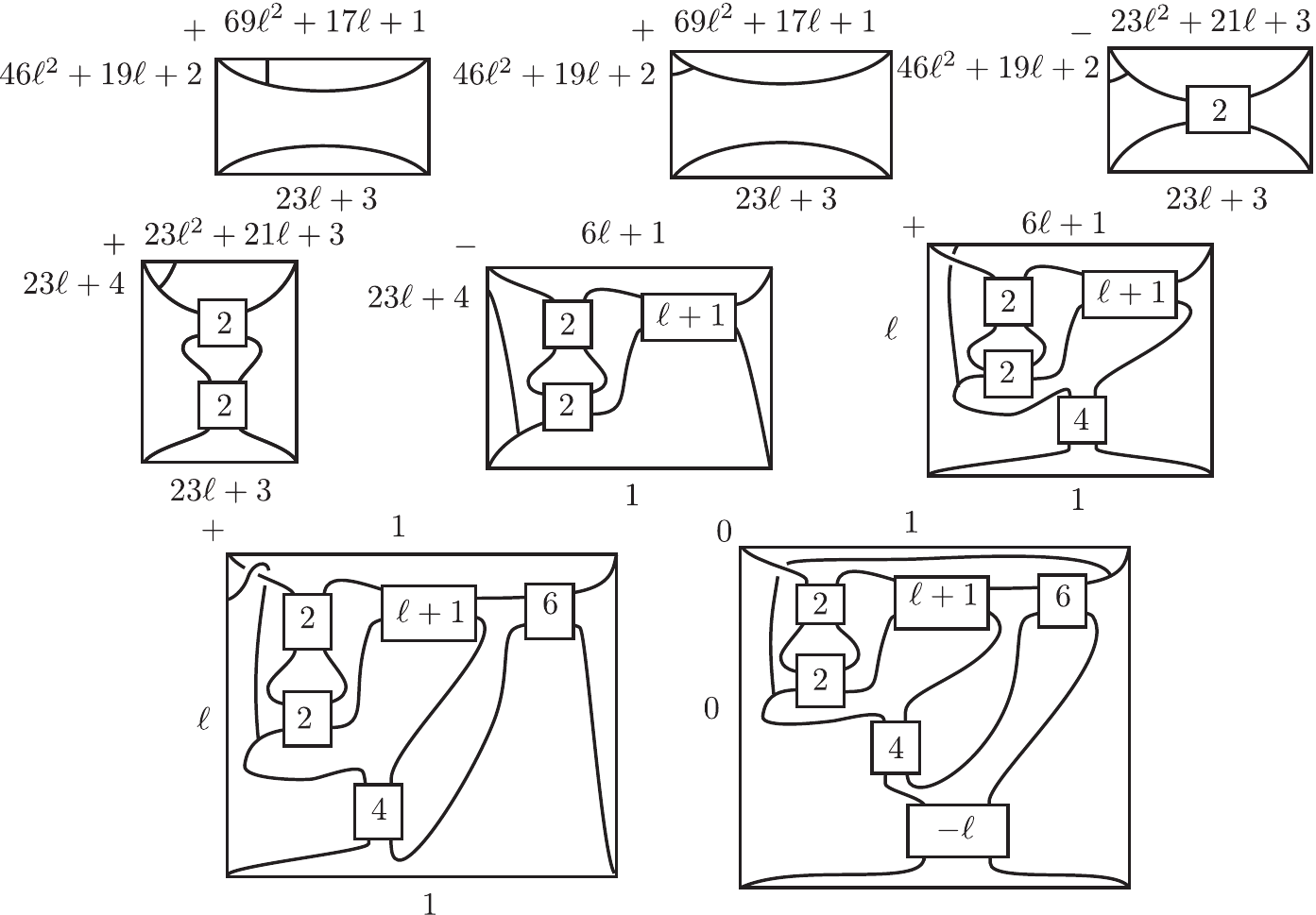}
\caption{The pillowcase method for an FGH type simple (1,1)-knot with the band.}
\label{2bribandFGH}
\end{center}
\end{figure}
\begin{figure}[htbp]
\begin{center}
\includegraphics[width=.9\textwidth]{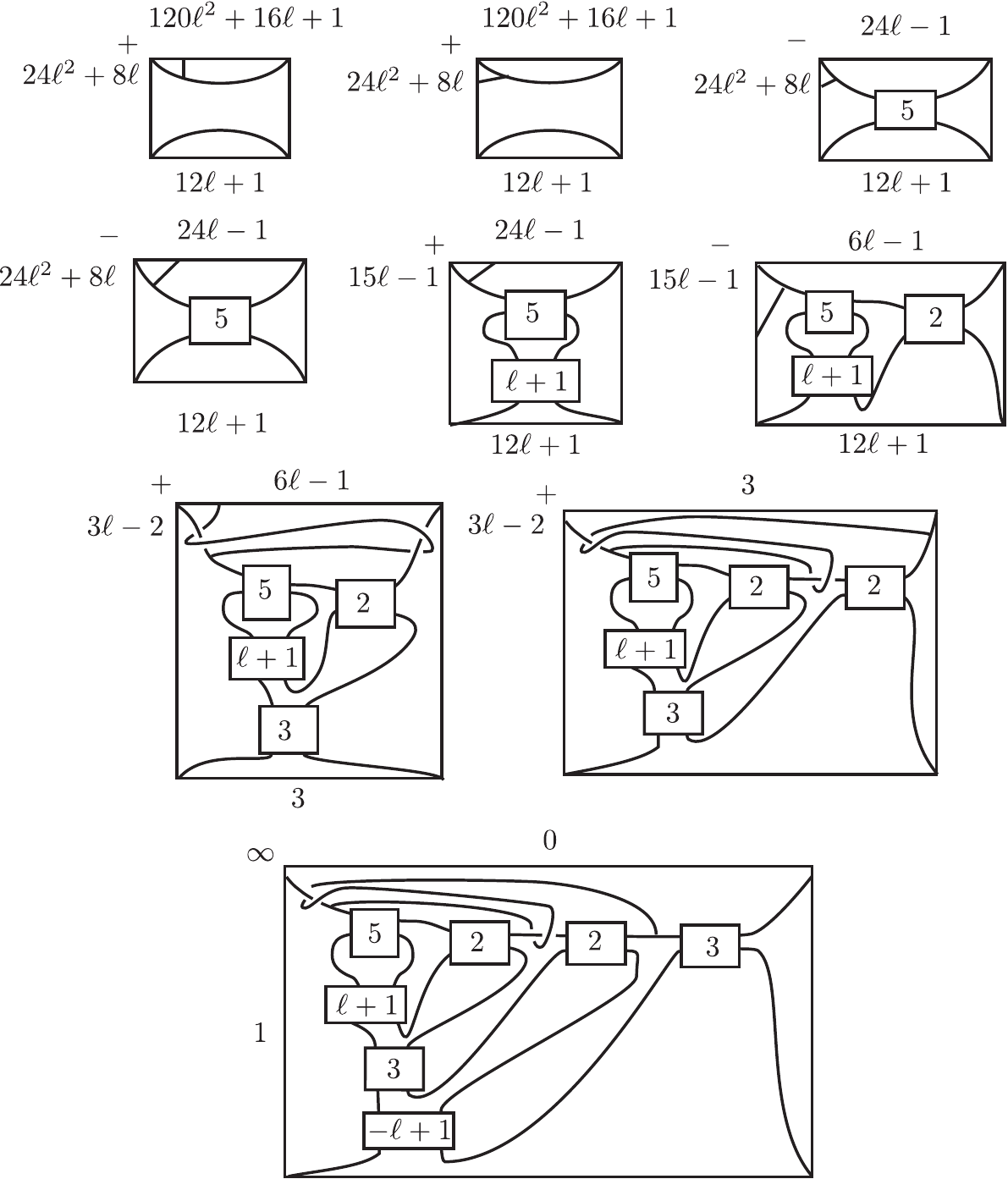}
\caption{The pillowcase method for an I type simple (1,1)-knot with the band.}
\label{2bribandI}
\end{center}
\end{figure}
\begin{figure}[htbp]
\begin{center}
\includegraphics[width=.85\textwidth]{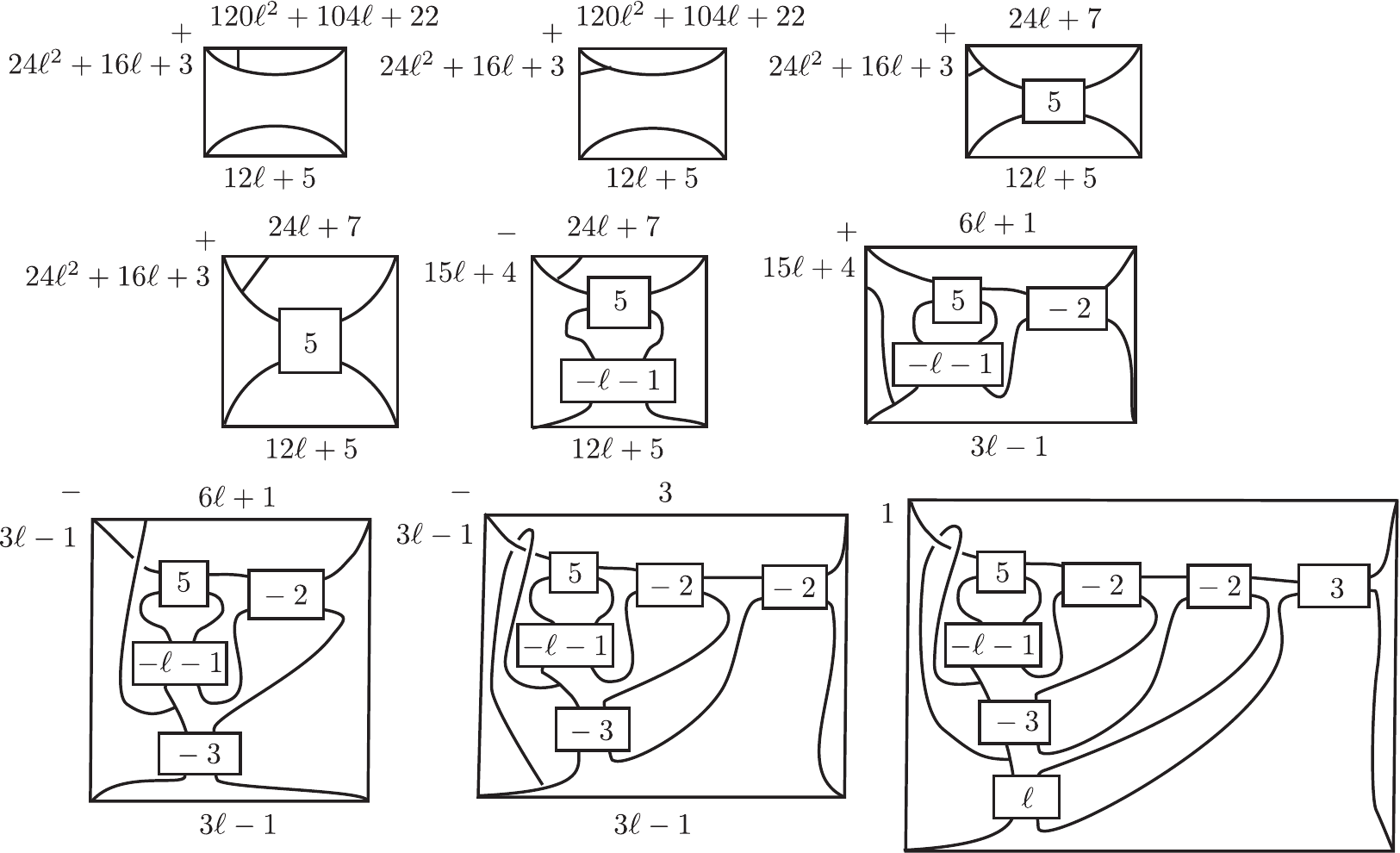}
\caption{The pillowcase method for a J type simple (1,1)-knot with the band.}
\label{2bribandJ}
\end{center}
\end{figure}
\begin{figure}[htbp]
\begin{center}
\includegraphics[width=.85\textwidth]{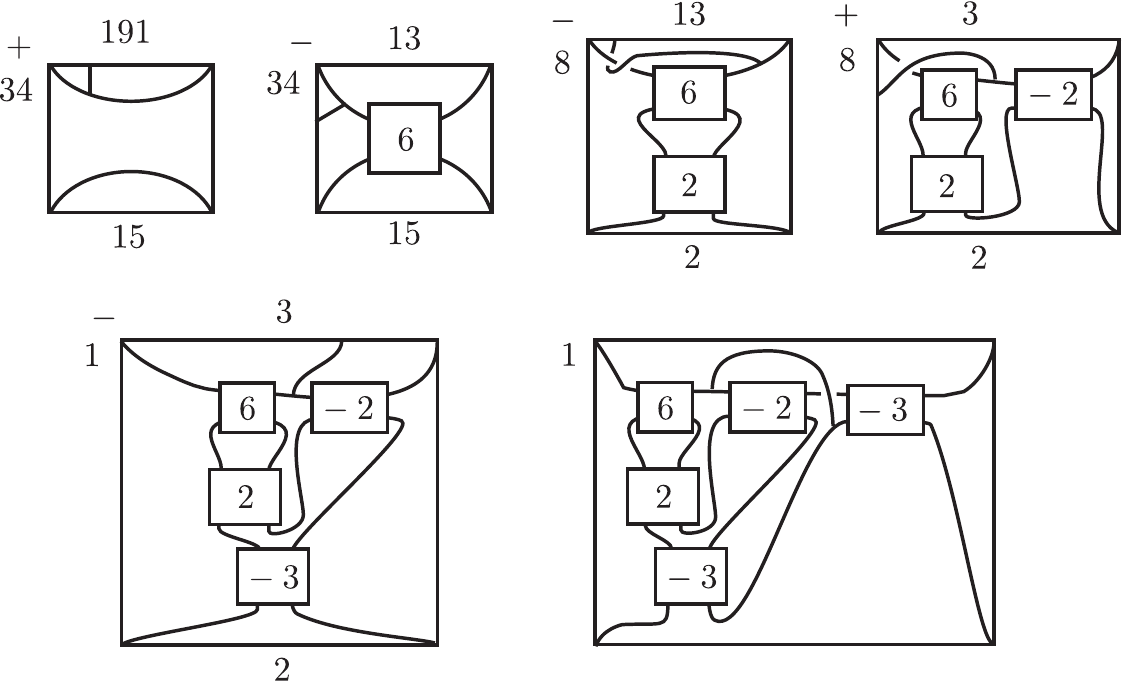}
\caption{The pillowcase method for a K type simple (1,1)-knot with the band.}
\label{2bribandK}
\end{center}
\end{figure}
\clearpage
\section*{Appendix 2}
In Appendix 2, we give double branched covering along the 2-bridge knot or link obtained in Appendix 1.
In the covering space the attached arc in the 2-bridge knot or link is mapped to $\tilde{K}_{p,k}$.
We describe the diagrams in generalized forms ({\sc Figure}~\ref{bandsumA}, {\sc Figure}~\ref{bandsumB}, {\sc Figure}~\ref{bandsumCDE}, {\sc Figure}~\ref{bandsumFGH}, {\sc Figure}~\ref{bandsumI}, {\sc Figure}~\ref{bandsumJ}, and {\sc Figure}~\ref{bandsumK}).
\begin{figure}[htbp]
\begin{center}
\includegraphics{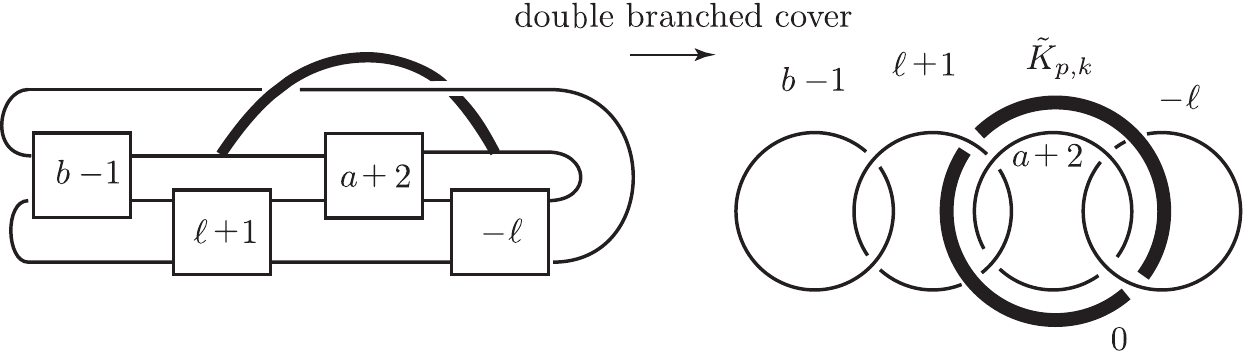}
\caption{A type simple (1,1)-knot $\tilde{K}_{p,k}$ ($a\in {\mathbb Z}$, $b\in {\mathbb Q}$).}
\label{bandsumA}
\end{center}
\end{figure}
\begin{figure}[htbp]
\begin{center}
\includegraphics{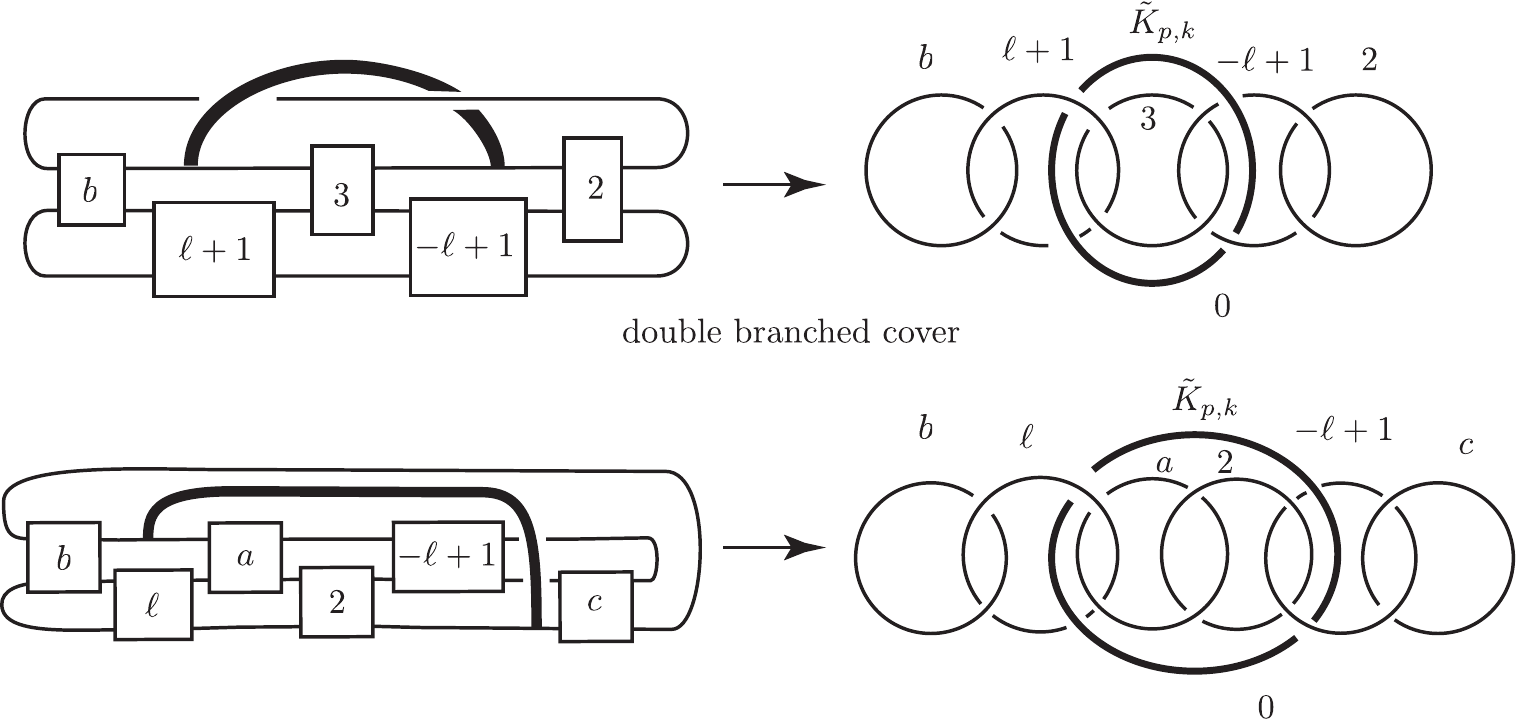}
\caption{B type simple (1,1)-knot $\tilde{K}_{p,k}$ ($b,c\in {\mathbb Q}$, $a\in {\mathbb Z}$).}
\label{bandsumB}
\end{center}
\end{figure}
\begin{figure}[htbp]
\begin{center}
\includegraphics[width=\textwidth]{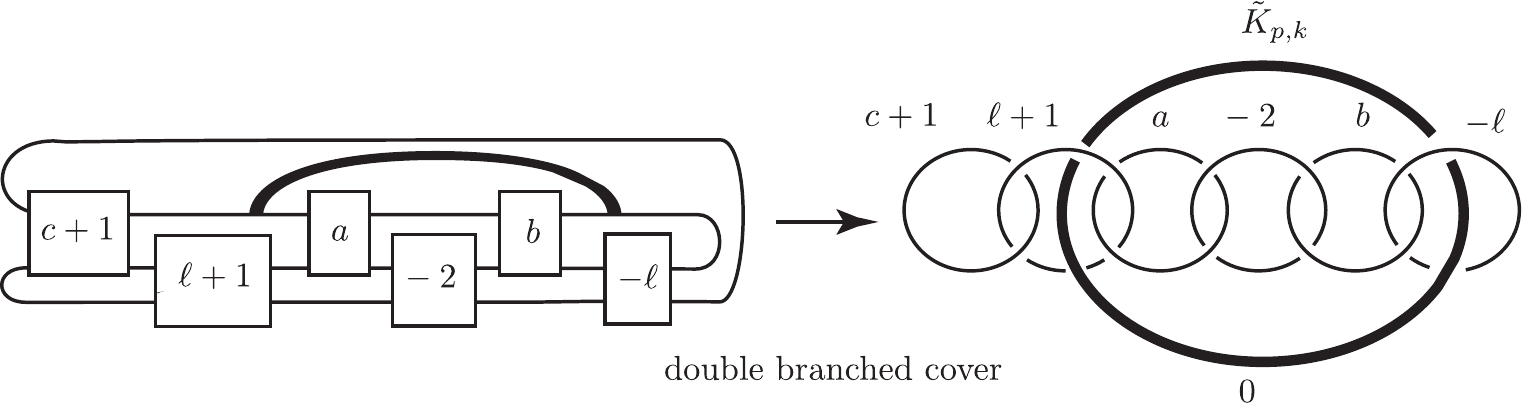}
\caption{CDE type simple (1,1)-knot $\tilde{K}_{p,k}$ ($a,b\in {\mathbb Z}$, $c\in {\mathbb Q}$).}
\label{bandsumCDE}
\end{center}
\end{figure}
\begin{figure}[htbp]
\begin{center}
\includegraphics{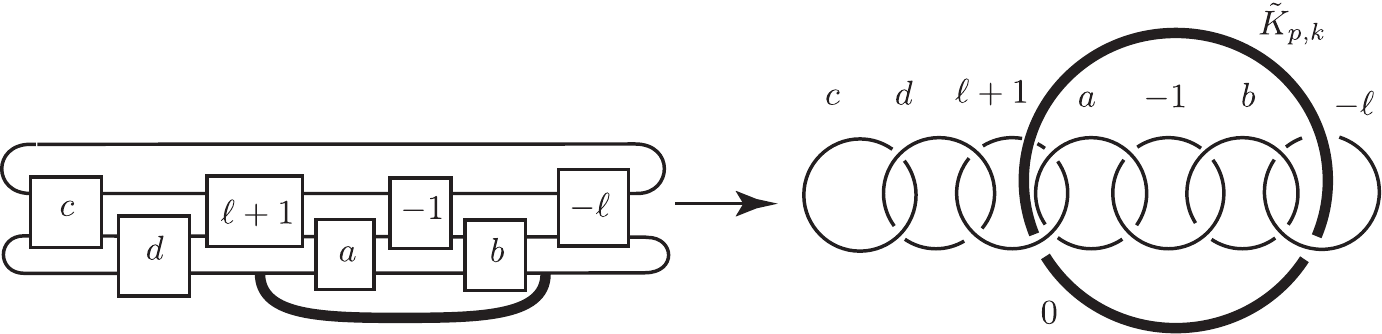}
\caption{FGH type simple (1,1)-knot $\tilde{K}_{p,k}$ ($a,b,d\in {\mathbb Z}$ and $c\in {\mathbb Q}$).}
\label{bandsumFGH}
\end{center}
\end{figure}
\begin{figure}[htbp]
\begin{center}
\includegraphics{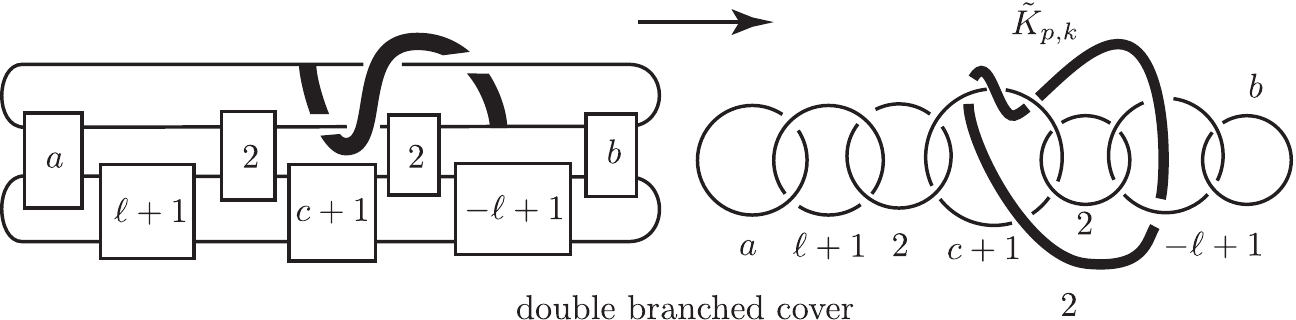}
\caption{I type simple (1,1)-knot $\tilde{K}_{p,k}$ ($a,b\in {\mathbb Q}$ and $c\in {\mathbb Z}$).}
\label{bandsumI}
\end{center}
\end{figure}
\begin{figure}[htbp]
\begin{center}
\includegraphics{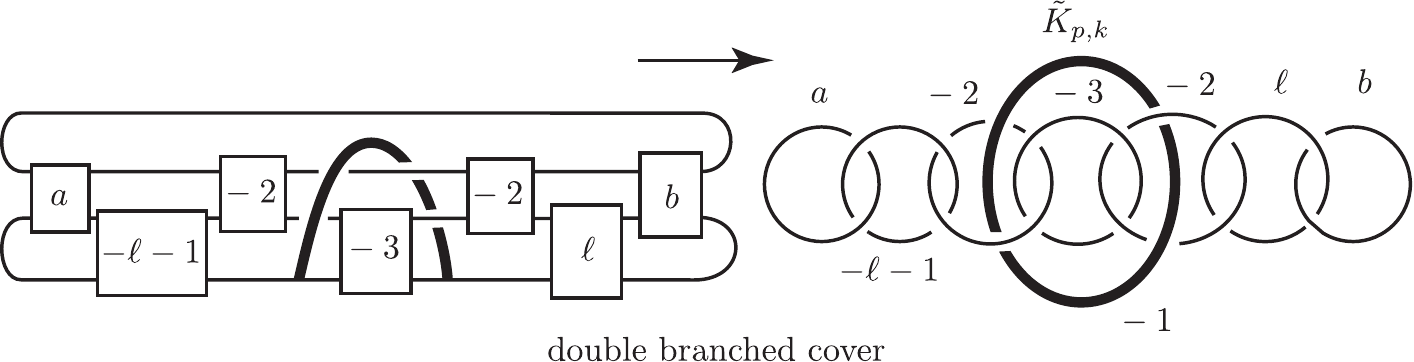}
\caption{J type simple (1,1)-knot $\tilde{K}_{p,k}$ ($a,b\in {\mathbb Q}$).}
\label{bandsumJ}
\end{center}
\end{figure}
\begin{figure}[htbp]
\begin{center}
\includegraphics{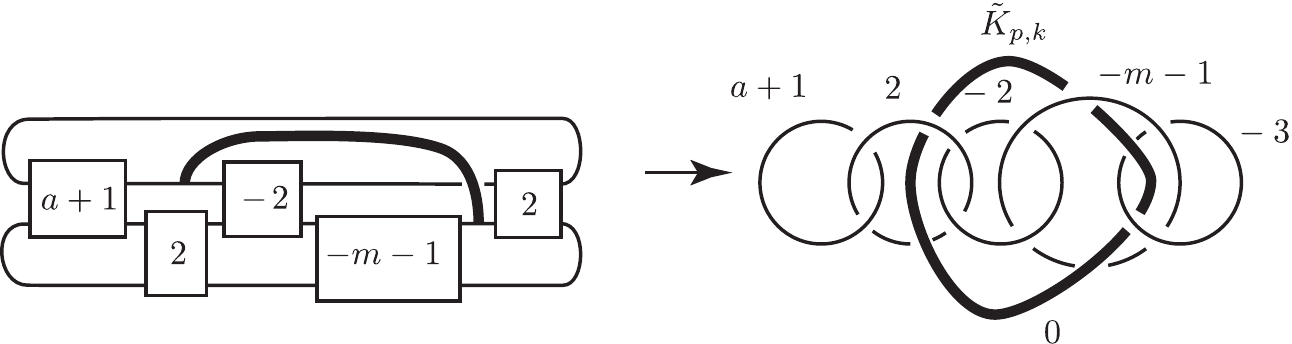}
\caption{K type simple (1,1)-knot $\tilde{K}_{p,k}$ ($m\in {\mathbb Z}$, $a\in {\mathbb Q}$).}
\label{bandsumK}
\end{center}
\end{figure}

\clearpage
\section*{Appendix 3}
In Appendix, we describe families of the knot diagrams $K_{p,k}$ in Seifert manifolds.
Attaching 0-framed meridian for the last diagrams in figures in Appendix 2 and deforming the diagrams other than
the 0-framed meridian into the form of the Seifert structure, we obtain the knot diagrams (the first picture in Appendix 3).
Then we start Kirby calculus for the surgery diagrams and obtain lens spaces ({\sc Figure}~\ref{DualA},~\ref{DualB},~\ref{DualCD},~\ref{DualFGH},~\ref{DualI}, ~\ref{DualJ} and ~\ref{DualK}).
These calculus easily extend to some Seifert surgeries (including lens surgeries) over Seifert manifolds (e.g., including Brieskorn homology spheres or graph homology spheres).
See {\sc Figure}~\ref{extA},~\ref{extB},~\ref{extE},~\ref{extI}, ~\ref{extJ} and ~\ref{extK}.
\begin{figure}[htbp]
\begin{center}
\includegraphics[width=.9\textwidth]{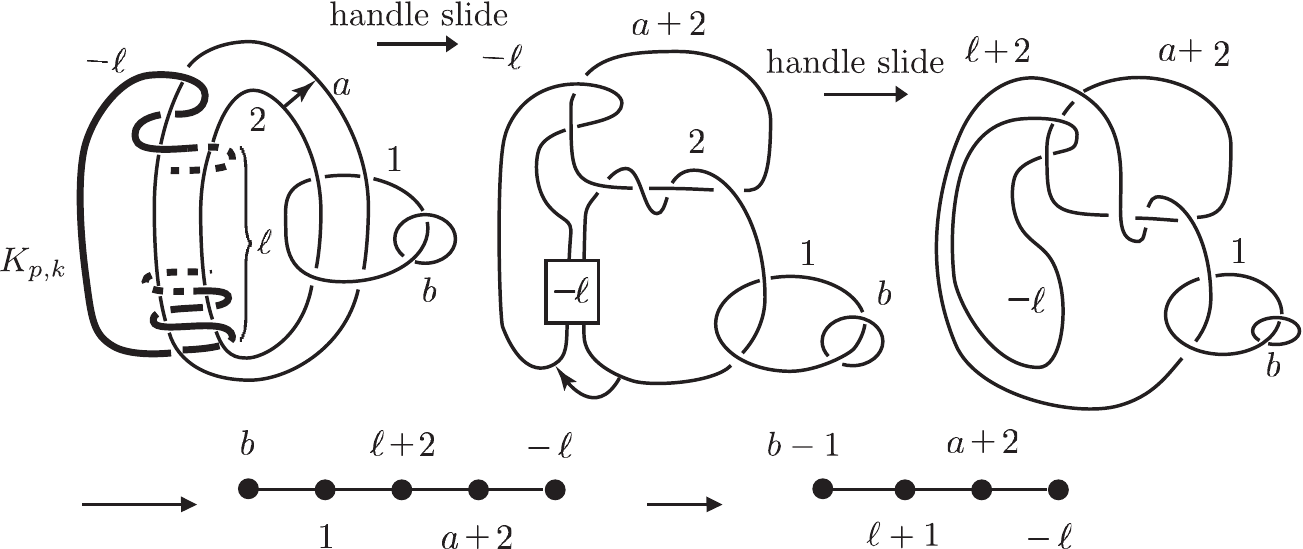}
\caption{A type $K_{p,k}$ (thickened line) for $a\in {\mathbb Z}$, $b\in {\mathbb Q}$ and the surgery calculus move to a lens space.
The cases of $(a,b)=(3,6\pm1/n)$ and $(2(2s+1)\pm1,(2s+1)/2)$ correspond to lens surgeries with $\A_1$ and $\A_2$ types  in $\Sigma(2,3,6n\pm1)$ and $\Sigma(2,2s+1,2(2s+1)\pm1)$.}
\label{DualA}
\end{center}
\end{figure}
\begin{figure}[htbp]
\begin{center}
\includegraphics{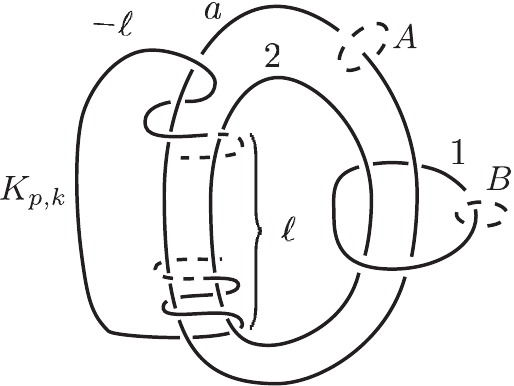}
\caption{The diagram of extended A type Dehn surgery along $K$.}
\label{extA}
\end{center}
\end{figure}

%\begin{figure}[htbp]
%\begin{center}
%\includegraphics{B-eps-converted-to.pdf}
%\caption{Lens spaces for $B$}
%\end{center}
%\end{figure}

\begin{figure}[htbp]
\begin{center}
\includegraphics[width=.9\textwidth]{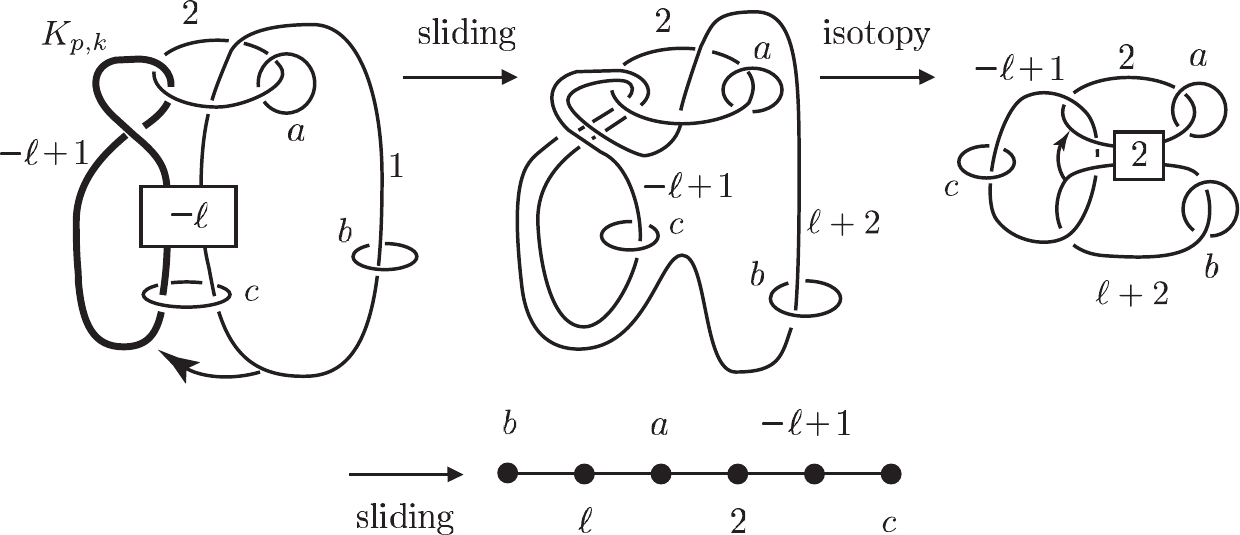}
\caption{B type $K_{p,k}$ ($a\in {\mathbb Z}$ and $b,c\in {\mathbb Q}$) and the surgery calculus move to a lens space.
The case of $(a,b,c)=(-1,5,2)$ or $(a,b,c)=(-1,2,5)$ is B type surgery in $\Sigma(2,3,5)$.}
\label{DualB}
\end{center}
\end{figure}

\begin{figure}[htbp]
\begin{center}
\includegraphics{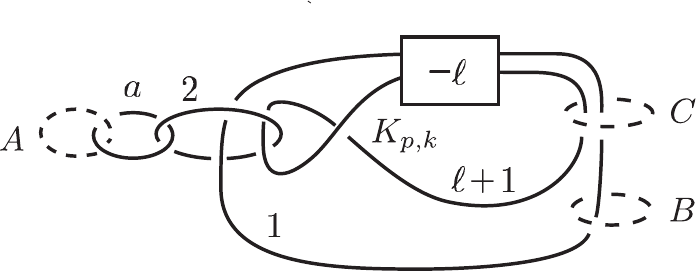}
\caption{The diagram of extended B type Dehn surgery along $K$.}
\label{extB}
\end{center}
\end{figure}

\begin{figure}[htbp]
\begin{center}
\includegraphics[width=.9\textwidth]{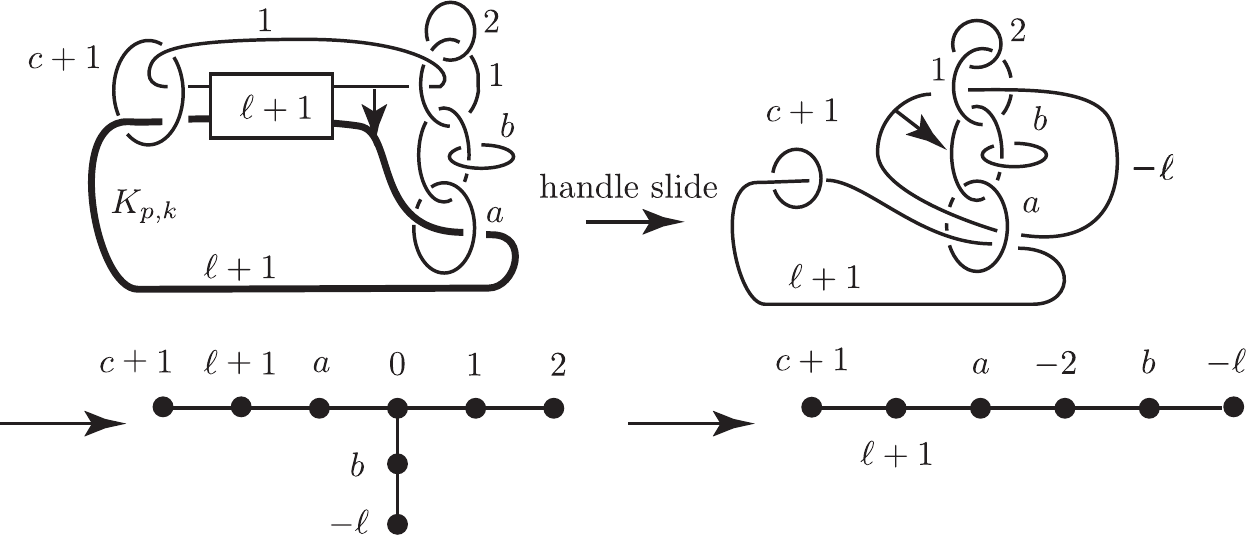}
\caption{CDE type $K_{p,k}$ ($a,b\in {\mathbb Z}$ and $c\in {\mathbb Q}$) and surgery calculus to a lens space. 
The cases of $(a,b,c)=(1,2,5), (2,1,5), (1,4,3),(4,1,3), (2,5,1),(5,2,1)$ are $\C_1$ $\C_2$, $\D_1$, $\D_2$, $\E_1$ and $\E_2$ type surgeries in $\Sigma(2,3,5)$.}
\label{DualCD}
\end{center}
\end{figure}
%\begin{figure}[htbp]
%\begin{center}
%\includegraphics{E1-eps-converted-to.pdf}
%\caption{Lens spaces for E}
%\label{DualE}
%\end{center}
%\end{figure}

\begin{figure}[htbp]
\begin{center}
\includegraphics[width=.9\textwidth]{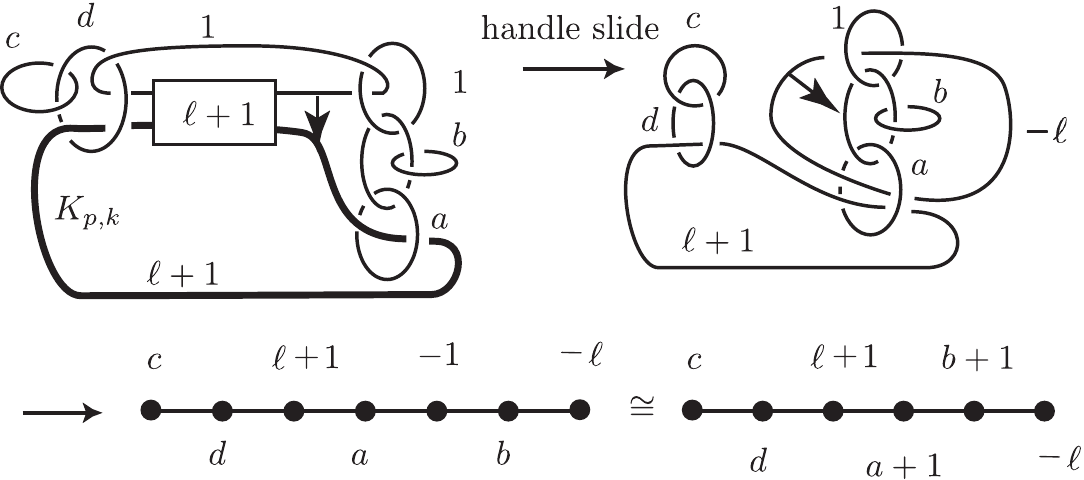}
\caption{FGH type $K_{p,k}$ and surgery calculus to a lens space.
The cases of $d=2$ and $\{a,b,c\}=\{2,3,5\}$ are FGH type surgeries in $\Sigma(2,3,5)$. }
\label{DualFGH}
\end{center}
\end{figure}

%\begin{figure}[htbp]
%\begin{center}
%\includegraphics{extendCDE-eps-converted-to.pdf}
%\caption{Generalized surgery of {\sc Figure}~\ref{DualCD}.}
%\label{extCD}
%\end{center}
%\end{figure}
\begin{figure}[htbp]
\begin{center}
\includegraphics{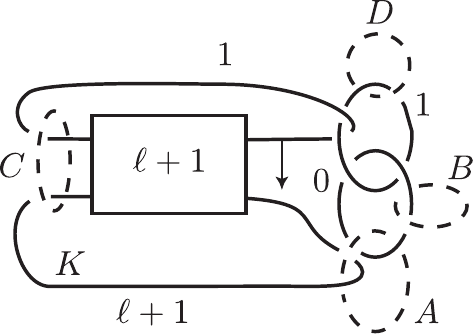}
\caption{The diagram of extended CDE and FGH type Dehn surgery along $K$.}
\label{extE}
\end{center}
\end{figure}
%\begin{figure}[htbp]
%\begin{center}
%\includegraphics{E-eps-converted-to.pdf}
%\end{center}
%\end{figure}

%\begin{figure}[htbp]
%\begin{center}
%\includegraphics{F-eps-converted-to.pdf}
%\caption{Lens spaces for F}
%\end{center}
%\end{figure}
%\begin{figure}[htbp]
%\begin{center}
%\includegraphics{G-eps-converted-to.pdf}
%\caption{Lens spaces for G}
%\end{center}
%\end{figure}
%\begin{figure}[htbp]
%\begin{center}
%\includegraphics{H-eps-converted-to.pdf}
%\caption{Lens spaces for H}
%\end{center}
%\end{figure}

%\begin{figure}[htbp]
%\begin{center}
%\includegraphics{extendFGH-eps-converted-to.pdf}
%\caption{Dehn surgery of type FGH.}
%\label{extFGH}
%\end{center}
%\end{figure}

%
%\begin{figure}[htbp]
%\begin{center}
%\includegraphics{I-eps-converted-to.pdf}
%\caption{Lens spaces for I}
%\end{center}
%\end{figure}

\begin{figure}[htbp]
\begin{center}
\includegraphics[width=.9\textwidth]{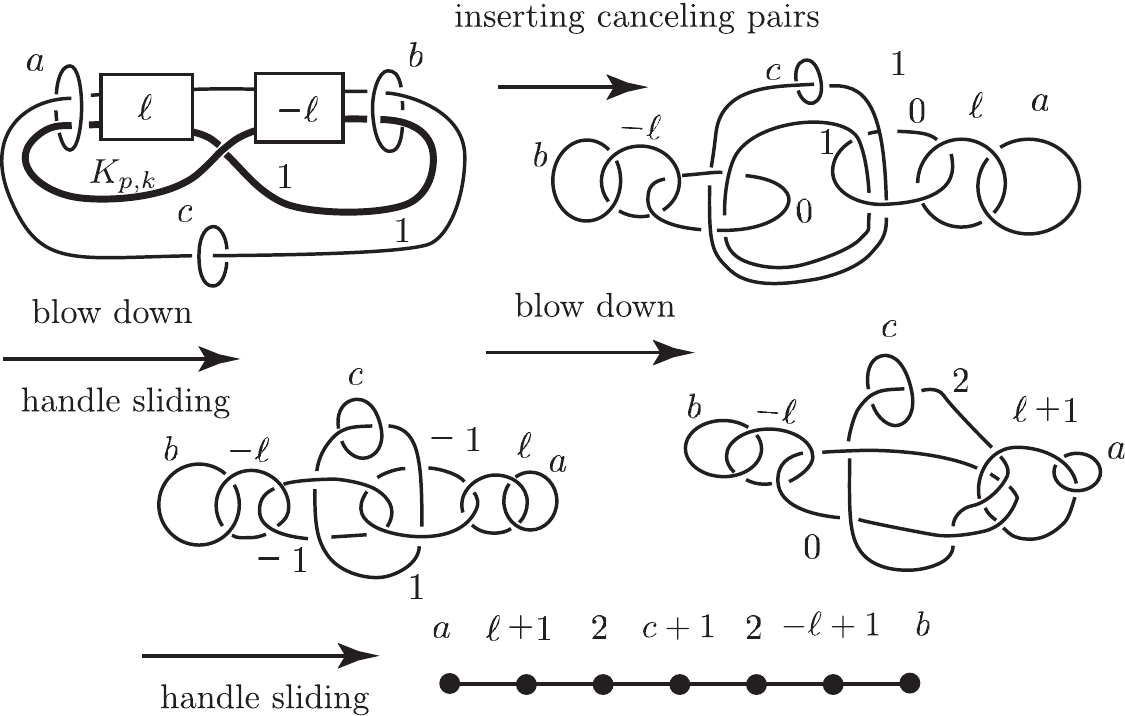}
\caption{I type $K_{p,k}$ ($a,b\in {\mathbb Q}$, $c\in {\mathbb Z}$) and surgery calculus to a lens space. The cases of $\{a,b,c\}=\{5,3,2\}$ are $\I_1,\I_2$ and $\I_3$ type knots in $\Sigma(2,3,5)$.}
\label{DualI}
\end{center}
\end{figure}
\begin{figure}[htbp]
\begin{center}
\includegraphics{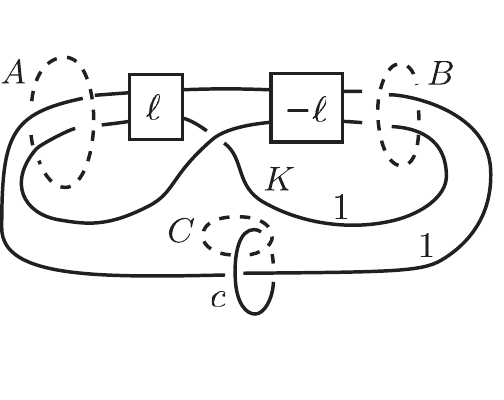}
\caption{The diagram of extended I type Dehn surgery along $K$.}
\label{extI}
\end{center}
\end{figure}

%
%\begin{figure}[htbp]
%\begin{center}
%\includegraphics{J-eps-converted-to.pdf}
%\caption{Lens spaces for J}
%\end{center}
%\end{figure}
\begin{figure}[htbp]
\begin{center}
\includegraphics[width=.9\textwidth]{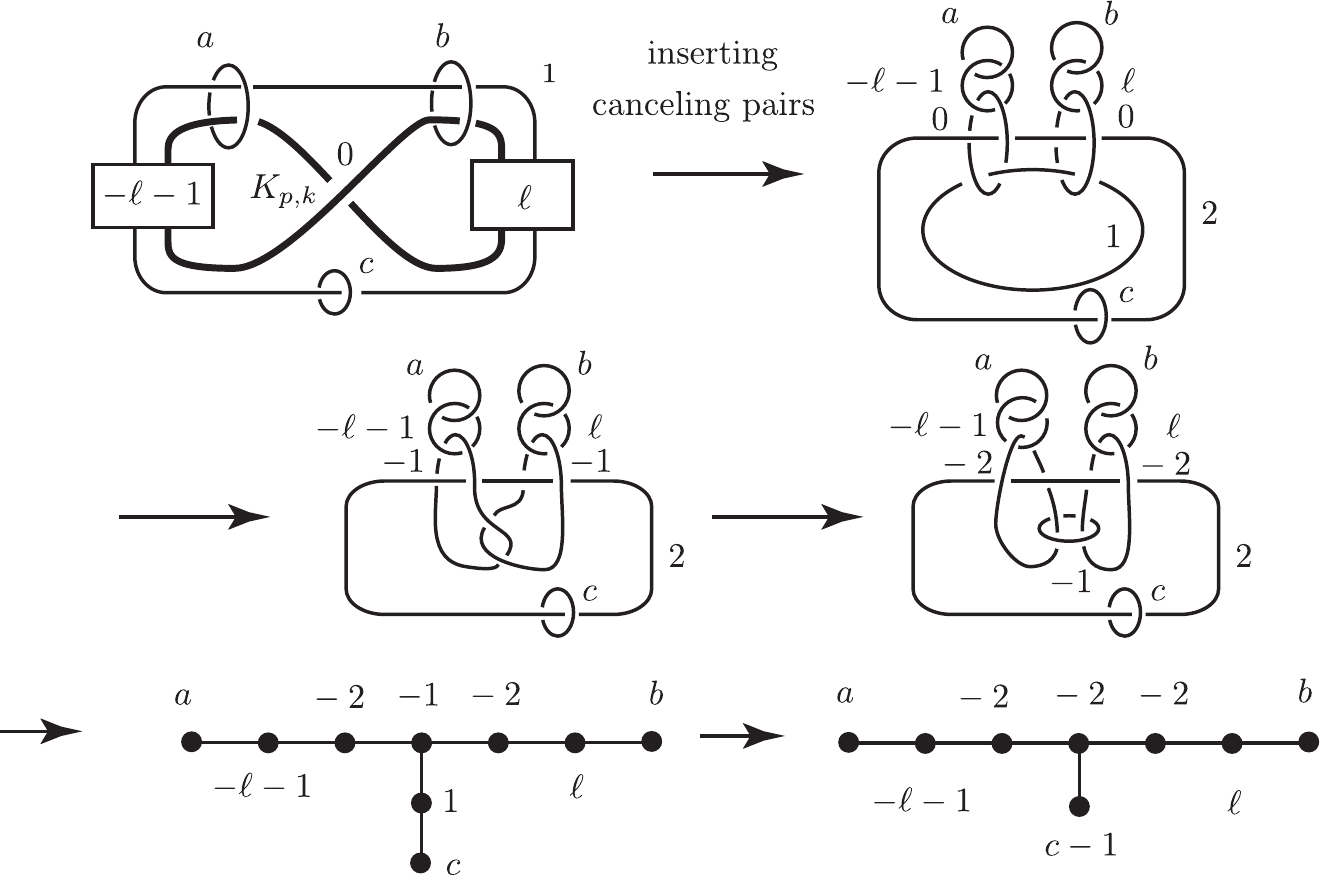}
\caption{J type $K_{p,k}$ ($a,b\in {\mathbb Q}$ $c=2$) and surgery calculus to a lens space. The cases of $\{a,b\}=\{3,5\}$ and $c=2$ are J type surgeries in $\Sigma(2,3,5)$.}
\label{DualJ}
\end{center}
\end{figure}
\begin{figure}[htbp]
\begin{center}
\includegraphics{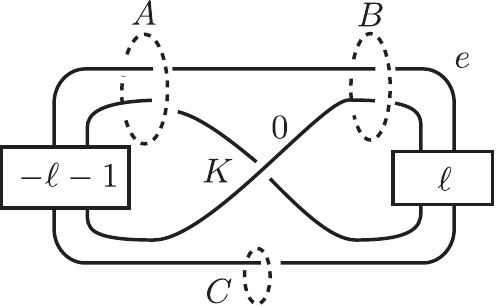}
\caption{Extended J type Dehn surgery.}
\label{extJ}
\end{center}
\end{figure}

\begin{figure}[htbp]
\begin{center}
\includegraphics[width=\textwidth]{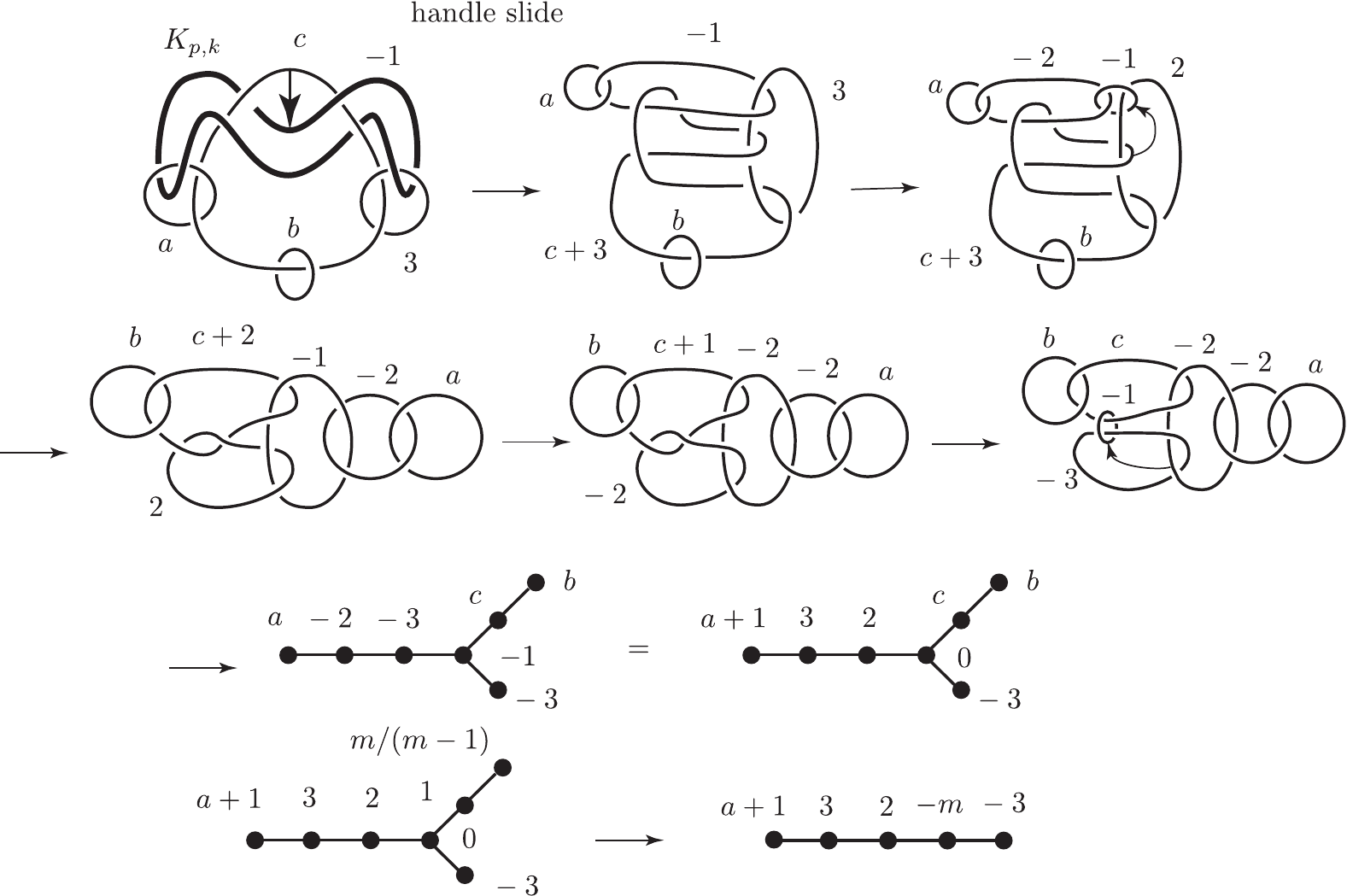}
\caption{K type $K_{p,k}$ and the surgery calculus to a lens space.
The case of $(a,b,c)=(5,2,1)$ is K type surgery in $\Sigma(2,3,5)$.}
\label{DualK}
\end{center}
\end{figure}

\begin{figure}[htbp]
\begin{center}
\includegraphics{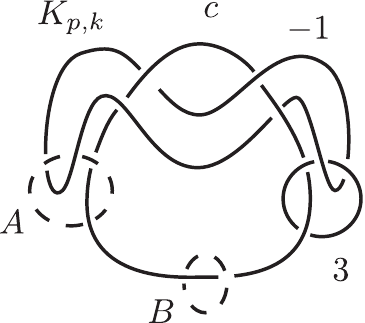}
\caption{The diagram of extended K type Dehn surgery along $K$.}
\label{extK}
\end{center}
\end{figure}

{\bf Acknowledgements:} 
This result is based on my talk presented by workshop "Topology of Knots X" at Tokyo Woman's Christian University in 2007.
The author is grateful for giving me the opportunity to talk in the conference.
The motivation to write this paper is a private communication \cite{G1} with Joshua Greene at G\"okova Geometry/Topology Conference 2011 as written as a private communication in \cite{G}.
The author is grateful for him with respect to this point.
We discuss the lens space surgery in $\Sigma(2,5,7)$ yielding $L(17,15)$ in Proposition~\ref{Joshua}.
In 8 years after that, I received the similar questions about the private discussion from
Daniel Ruberman and Kyungbae Park independently.
It would be possibly somehow worthy that such computation and results are available.
The author is so grateful that their communications give the opportunity to write this paper.
In particular, K. Park gave the opportunity to talk in KIAS topology seminar in 2016 spring.
The author thanks them for reminding me the computation.
The author also thanks anonymous referees for correcting my first manuscript so detailed.
\clearpage

\end{document}